\newcommand{\macrocolour}{\color{black}}
\newtheorem{theorem}                   {Theorem}
\newtheorem{lemma}           [theorem] {Lemma}  
\newtheorem{corollary}       [theorem] {Corollary}
\newtheorem{prop}            [theorem] {Proposition}  
\newtheorem{claim}           [theorem] {Claim}
\newtheorem{definition}      [theorem] {Definition}
\theoremstyle{remark}
\newtheorem{remark}          [theorem] {Remark}
\newcommand{\eps}{\varepsilon}
\newcommand{\NN}{\mathbb{N}}
\newcommand{\ZZ}{\mathbb{Z}}
\newcommand{\RR}{\mathbb{R}}
\newcommand{\EE}{\mathbb{E}}
\newcommand{\VV}{\mathbb{V}}
\newcommand{\PP}{\mathbb{P}}
\newcommand{\cA}{\mathcal{A}}
\newcommand{\cB}{\mathcal{B}}
\newcommand{\cC}{\mathcal{C}}
\newcommand{\cD}{\mathcal{D}}
\newcommand{\cF}{\mathcal{F}}
\newcommand{\cG}{\mathcal{G}}
\newcommand{\cL}{\mathcal{L}}
\newcommand{\cP}{\mathcal{P}}
\newcommand{\cM}{\mathcal{M}}
\newcommand{\cN}{\mathcal{N}}
\newcommand{\cS}{\mathcal{S}}
\newcommand{\cT}{\mathcal{T}}
\newcommand{\cY}{\mathcal{Y}}
\newcommand{\No}{\mathrm{N}}
\newcommand{\Po}{\mathrm{Po}}
\newcommand{\Be}{\mathrm{Be}}
\newcommand{\Ind}[1]{\mathbbm{1}_{#1}}
\newcommand{\Indens}[1]{\Ind{\{#1\}}}
\newcommand{\cond}{\; \middle\vert \;}
\newcommand{\harm}[1]{{\macrocolour H_{#1}}}
\newcommand{\RVnumberAP}[2]{{\macrocolour X_{#1}^{#2}}}
\newcommand{\RVnumberAPCentralised}[2]{{\macrocolour \bar{X}_{#1}^{#2}}}
\newcommand{\IndicatorRVAPCentralised}[1]{{\macrocolour Z_{#1}}}
\newcommand{\reordering}{{\macrocolour \pi}}
\newcommand{\typeFunction}{{\macrocolour \tau}}
\newcommand{\type}{{\macrocolour t}}
\newcommand{\typeVect}{{\macrocolour \bf \type}}
\newcommand{\typeSizeVectSpace}[2]{{\macrocolour \cS_{#1}(#2)}}
\newcommand{\typeVectSpaceValid}[1]{{\macrocolour \cT_{#1}}}
\newcommand{\IndexSet}[1]{{\macrocolour I_{#1}}}
\newcommand{\IndexSetSize}[1]{{\macrocolour \left|\IndexSet{#1}\right|}}
\newcommand{\IndexSetMixed}[2]{{\macrocolour \IndexSet{#1,#2}}}
\newcommand{\IndexSetSizeMixed}[2]{{\macrocolour \IndexSetSize{#1,#2}}}
\newcommand{\typeSize}{{\macrocolour s}}
\newcommand{\typeSizeVect}{{\macrocolour \bf \typeSize}}
\newcommand{\match}{{\macrocolour \nu}}
\newcommand{\mainContribution}[1]{{\macrocolour F(#1)}}
\newcommand{\mainContributionSet}[1]{{\macrocolour \cF(#1)}}
\newcommand{\mainContributionSetMatching}[2]{{\macrocolour \cF_{#2}(#1)}}
\newcommand{\minorContribution}[1]{{\macrocolour G(#1)}}
\newcommand{\minorContributionSet}[1]{{\macrocolour \cG(#1)}}
\newcommand{\contribution}[1]{{\macrocolour \mu_{#1}}}
\newcommand{\numberAPTuples}[1]{{\macrocolour M_{#1}}}
\newcommand{\numberMixedAPTuples}[1]{{\macrocolour \numberAPTuples{#1}}}
\newcommand{\setAP}[2]{{\macrocolour \cA_{#1}^{#2}}}
\newcommand{\numberAP}[1]{{\macrocolour A_{#1}}}
\newcommand{\setLoosePairs}[1]{{\macrocolour \cB_{#1}}}
\newcommand{\setMixedLoosePairs}[2]{{\macrocolour \setLoosePairs{#1,#2}}}
\newcommand{\setOverlapPairs}[1]{{\macrocolour \cC_{#1}}}
\newcommand{\setMixedOverlapPairs}[2]{{\macrocolour \setOverlapPairs{#1,#2}}}
\newcommand{\numberLoosePairs}[1]{{\macrocolour B_{#1}}}
\newcommand{\numberMixedLoosePairs}[2]{{\macrocolour \numberLoosePairs{#1,#2}}}
\newcommand{\numberLoosePairsLeadingConstant}[1]{{\macrocolour \beta_{#1}}}
\newcommand{\numberMixedLoosePairsLeadingConstant}[2]{{\macrocolour\numberLoosePairsLeadingConstant{#1,#2}}}
\newcommand{\numberPartitionPairs}[2]{{\macrocolour D_{#1}^{(#2)}}}
\newcommand{\numberMixedPartitionPairs}[3]{{\macrocolour \numberPartitionPairs{#1,#2}{#3}}}
\newcommand{\setPartitionPairs}[2]{{\macrocolour \cD_{#1}^{(#2)}}}
\newcommand{\setMixedPartitionPairs}[3]{{\macrocolour \setPartitionPairs{#1,#2}{#3}}}
\newcommand{\numberOverlapPairs}[1]{{\macrocolour C_{#1}}}
\newcommand{\numberMixedOverlapPairs}[2]{{\macrocolour \numberOverlapPairs{#1,#2}}}
\newcommand{\depGraph}[1]{\cG_{#1}}
\newcommand{\neighbourhood}[2]{\cN_{#2}(#1)}
\newcommand{\neighbourhoodClosed}[2]{\widehat{\cN}_{#2}(#1)}
\newcommand{\SteinControl}[2]{{\macrocolour \mathcal{V}_{#1}(#2)}}
\newcommand{\covarianceLeadingConstant}[2]{{\macrocolour \kappa_{#1,#2}}}
\newcommand{\covarianceFunction}[1]{{\macrocolour h_{#1}}}
\newcommand{\totalOrder}{{\macrocolour \pi}}
\newcommand{\pos}[1]{{\macrocolour \iota_{#1}}}
\newcommand{\posOne}{{\macrocolour \iota}}
\newcommand{\posTwo}{{\macrocolour \iota'}}
\newcommand{\comDiff}[1]{{\macrocolour \delta_{#1}}}
\newcommand{\comDiffOne}{{\macrocolour \delta}}
\newcommand{\comDiffTwo}{{\macrocolour \delta'}}
\newcommand{\comDiffMaxOne}{{\macrocolour \Delta}}
\newcommand{\comDiffMaxTwo}{{\macrocolour \Delta'}}
\newcommand{\interPoint}{{\macrocolour m}}
\newcommand{\posRescaled}[1]{{\macrocolour a_{#1}}}
\newcommand{\posRescaledOne}{{\macrocolour a}}
\newcommand{\posRescaledTwo}{{\macrocolour a'}}
\newcommand{\comDiffRescaledOne}{{\macrocolour u}}
\newcommand{\comDiffRescaledTwo}{{\macrocolour u'}}
\newcommand{\measureComDiff}[1]{{\macrocolour \nu_{#1}}}
\newcommand{\measurePos}[1]{{\macrocolour \mu_{#1}}}
\newcommand{\ellOne}{{\macrocolour  \ell}}
\newcommand{\ellTwo}{{\macrocolour  \ell'}}
\newcommand{\Gaussian}[5]{{\macrocolour \No\left(\meanVector{#1}{#2},\covarianceMatrix{#3}{#4}{#5}\right)}}
\newcommand{\covarianceMatrix}[3]{{\macrocolour \begin{pmatrix}#1&#2\\#2&#3\end{pmatrix}}}
\newcommand{\meanVector}[2]{{\macrocolour \begin{pmatrix}#1\\#2\end{pmatrix}}}
\def\longlongrightarrow{\hspace{+0.1ex} - \hspace{-1.1ex} - \hspace{-1.1ex} - \hspace{-1.1ex}\longrightarrow  } 
\newcommand{\tendsto}[2]{ \underset{#1 \rightarrow #2}{\longlongrightarrow} }
\begin{document}
\title[Arithmetic progressions in random sets]{Bivariate fluctuations for the number of arithmetic progressions in random sets}

\author{Yacine Barhoumi-Andr\'eani}
\address{Ruhr-Universit\"at Bochum, Fakult\"at f\"ur Mathematik, Universit\"atsstrasse 150, 44780 Bochum, Germany.}
\email{\tt yacine.barhoumi@rub.de}
\author{Christoph Koch}
\address{Department of Statistics, University of Oxford, St. Giles 24-29, Oxford OX1 3LB, UK.}
\email{\tt christoph.koch@stats.ox.ac.uk}
\author{Hong Liu}
\address{Mathematics Institute and DIMAP, University of Warwick, Coventry, CV4 7AL, UK.}
\email{\tt h.liu.9@warwick.ac.uk}
\thanks{Y.B. was supported by EPSRC grant No.\ EP/L012154/1. C.K.\ was supported by EPSRC Grant No.\ EP/N004833/1 and ERC Grant No.\ 639046. H.L.\ was supported by the Leverhulme Trust Early Career Fellowship~ECF-2016-523.}
\date{\today}

\begin{abstract}
We study arithmetic progressions $\{a,a+b,a+2b,\dots,a+(\ell-1) b\}$, with $\ell\ge 3$, in random subsets of the initial segment of natural numbers $[n]:=\{1,2,\dots, n\}$. Given $p\in[0,1]$ we denote by $[n]_p$ the random subset of $[n]$ which includes every number with probability $p$, independently of one another. The focus lies on sparse random subsets, i.e.\ when $p=p(n)=o(1)$ as $n\to+\infty$.

Let $X_\ell$ denote the number of distinct arithmetic progressions of length $\ell$ which are contained in $[n]_p$. We determine the limiting distribution for $X_\ell$ not only for fixed $\ell\ge 3$ but also when  $\ell=\ell(n)\to+\infty$. The main result concerns the joint distribution of the pair $(X_{\ell},X_{\ell'})$, $\ell>\ell'$, for which we prove a bivariate central limit theorem for a wide range of $p$. Interestingly, the question of whether the limiting distribution is trivial, degenerate, or non-trivial is characterised by the asymptotic behaviour (as $n\to+\infty$) of the threshold function $\psi_\ell=\psi_\ell(n):=np^{\ell-1}\ell$. The proofs are based on the method of moments  and combinatorial arguments, such as an algorithmic enumeration of collections of arithmetic progressions. 
\end{abstract}
\maketitle
\noindent Keywords: arithmetic progression, central limit theorem, bivariate fluctuations, method of moments, exploration process\\
\noindent Mathematics Subject Classification: 60C05, 11B25, 05C80, 60F05

\section{Introduction and main results}\label{Sec:Introduction} 

An $\ell$-term arithmetic progression ($ \ell $-AP) in a set $ \mathcal{X}\subset \ZZ $ is an (ordered) $ \ell $-tuple of distinct numbers $ (a, a + b, \dots, a + (\ell - 1)b) $ whose elements belong to $ \mathcal{X} $. In Dickson's \emph{History of the Theory of Numbers}, the analysis of APs is traced back to around 1770 when it became prominent due to Lagrange and Waring investigating how large the common difference of an $ \ell $-AP of primes must be. Ever since, the study of APs has remained an extremely active domain of research and led to several results of fundamental importance, for instance Dirichlet's Theorem~\cite{Dirichlet} proved in 1837 played a key role in the formation of analytic number theory. Perhaps unsurprisingly, APs also became objects of interest in other fields such as combinatorics: Erd\H{o}s stated a number of conjectures related to $\ell$-APs~\cite[pp. 232-233]{Bollobas}. In particular, he offered \$1000 to solve the following \textit{largest progression-free subset} problem: find the cardinality of the largest subset of $ \{1, \dots, m\} $ ($m\in\NN$) which does not contain any $\ell $-AP. This problem was solved by Sz\'emeredi  with his celebrated density theorem~\cite{Szemeredi}: a subset of $\NN$ of non-zero upper asymptotic density contains $\ell$-APs of any arbitrary length $\ell$. Subsequently, based on Sz\'emeredi's Theorem, Green and Tao~\cite{GreenTaoPrimes} proved the long-standing conjecture on prime APs: (dense subsets of) the primes contain infinitely many $\ell$-APs for all lengths $\ell$. 

In 1936, Cram\'er~\cite{Cramer} conjectured that the gaps between two consecutive primes remain asymptotically bounded by the square of their logarithms and backed this conjecture with a heuristic model that replaces the set $ \mathcal{P} $ of primes by a random set $\mathcal{P}'$ made out of Bernoulli random variables, where $\PP(m\in\mathcal{P}')\approx 1/\log m$ independently for all integers $m\ge2$. However, the study of APs in random sets does not only provide a nice heuristic for number theoretic problems but is also a very natural and interesting model from a probabilistic point of view. For instance, Kohayakawa, \L uczak, and R\"odl~\cite{KLR} proved that  \emph{sparse} uniformly random subsets $M\subseteq\{1,\dots,n\}$ of size $|M|=\Omega(\sqrt{n})$ have the property that any (sufficiently) dense subset of $M$ already contains a $3$-AP with probability tending to $1$ as $n\to+\infty$.

In this article we focus our attention on longer APs in sparse binomial subsets of $\{1,\dots,n\}$, including $\ell$-APs with length $\ell=\ell(n)\to+\infty$ as $n\to+\infty$. In particular, we determine the limiting distribution of the number of $\ell$-APs and analyse the joint distribution of the numbers of $\ell$-APs and $\ell'$-APs of different lengths $\ell\neq\ell'$.

\subsection{Main results}
We consider a family of random subsets of the initial segments $[n]:=\{1,\dots,n\}\subset\NN$ of the integers. For any $p=p(n)\in [0,1]$ let $\Xi_1,\dots,\Xi_n$ be a collection of independent identically distributed $\Be(p)$ random variables, denote their product measure by $\PP$, and let $[n]_p:=\{i\in[n]\colon \Xi_i=1\}$ be the $p$-percolation of $[n]$, i.e.\ $[n]_p$ is the random subset of $[n]$ obtained by deleting any of the elements with probability $1-p$, independently of all other elements. We use the term \emph{constant} to mean independent of the parameter $n$, and any unspecified asymptotic notation (including limits) is to be understood with respect to $n\to+\infty$.

For any  integer $ \ell \in \{ 3, \dots, n \} $ we denote the set of all $\ell$-APs in $[n]$ by $\setAP{\ell}{} $ and define $\RVnumberAP{\ell}{}$ to be the random variable counting the number of $\ell$-APs in $[n]_p$, namely
$$
\RVnumberAP{\ell}{}=\RVnumberAP{\ell}{}(n):=|\setAP{\ell}{}|=\sum_{T\in\setAP{\ell}{}}\Ind{\{T\subseteq[n]_p\}}.
$$
Clearly, $[n]$ itself is an $n$-AP and any $\ell$-AP contains a whole number of $\ell'$-APs for each $3\le \ell'\le\ell-1$. Therefore, the family $\{\RVnumberAP{\ell}{}\}_{3\le\ell\le n}$ is obviously correlated in a non-trivial way. While the FKG inequality (e.g.\ Theorem~2.12 in \cite{JansonLuczakRucinski}) implies that this family is actually positively correlated, it is a priori unclear whether this correlation is asymptotically relevant. The main goal of this article is to study the asymptotic behaviour of the joint distribution of the pair $(\RVnumberAP{\ell_1}{},\RVnumberAP{\ell_2}{})$ with $\ell_1>\ell_2$.

We start by determining the limiting distribution of the number of $\ell$-APs to be either a Poisson distribution or a Gaussian distribution. Let  $\sigma_{\ell}:=\sqrt{\VV(\RVnumberAP{\ell}{})}$ denote the standard deviation of $\RVnumberAP{\ell}{}$.
\begin{theorem}[Univariate limiting distributions]\label{thm:mainUnivariate}
Let $\ell\ge 3$ be either a constant, or $\ell=\ell(n)\to+\infty$ satisfying $\ell/\log n\to 0$, and let $0< p=p(n)=o(1)$.  
\begin{enumerate}[(a)]
\item If $n^2p^{\ell}/(\ell-1)\to c$, for some $c\in\RR_+$, then  \label{thm:mainUnivariatePoisson}
$
\RVnumberAP{\ell}{}  \stackrel{d}{\longrightarrow} \Po\left(c/2\right).
$
\item If $n^2p^{\ell}/(\ell-1)\to+\infty$, then \label{thm:mainUnivariateNormal}
$
\left(\RVnumberAP{\ell}{}-\EE(\RVnumberAP{\ell}{})\right)\sigma_\ell^{-1} \stackrel{d}{\longrightarrow}\No(0,1).
$
\end{enumerate}
\end{theorem}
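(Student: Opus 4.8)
The plan is to prove both parts by the method of moments: part~(a) through factorial moments and part~(b) through centred moments (equivalently, cumulants). In each case the probabilistic content is slight, and the substance is a combinatorial enumeration of tuples of $\ell$-APs organised by their overlap pattern. As a preliminary one records the elementary count $|\setAP{\ell}{}|=\sum_{b\ge1}\max\{0,\,n-(\ell-1)b\}=\tfrac{n^2}{2(\ell-1)}(1+o(1))$ (valid since $\ell=o(n)$), so that $\EE(\RVnumberAP{\ell}{})=|\setAP{\ell}{}|\,p^{\ell}=\tfrac{n^2p^\ell}{2(\ell-1)}(1+o(1))$, which is why $n^2p^\ell/(\ell-1)$ is the governing quantity. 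One also needs two overlap statistics: $\numberLoosePairs{\ell}$, the number of ordered pairs of distinct $\ell$-APs meeting in exactly one point, and $\numberOverlapPairs{\ell}$, the number meeting in at least two points. A point-degree count (with $d(x):=|\{T\in\setAP{\ell}{}\colon x\in T\}|$ one has $\sum_x d(x)=\ell\,|\setAP{\ell}{}|=\Theta(n^2)$ and $\max_x d(x)=O(n\log\ell)$, the latter from a harmonic sum over the position of $x$ in the progression) gives $\numberLoosePairs{\ell}=\Theta(n^3)$ up to a factor $\ell^{O(1)}$, whereas $\numberOverlapPairs{\ell}$ is of strictly smaller order, essentially because two $\ell$-APs meeting in two points have commensurable common differences and are very rigid.

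For part~(a), fix $r$ and compute the $r$-th factorial moment
\[
\EE\big[(\RVnumberAP{\ell}{})_r\big]=\sum_{(T_1,\dots,T_r)}\PP\big(T_1\cup\dots\cup T_r\subseteq[n]_p\big)=\sum_{(T_1,\dots,T_r)}p^{|T_1\cup\dots\cup T_r|},
\]
the sum over ordered $r$-tuples of pairwise distinct $\ell$-APs. Split according to whether the $T_i$ are pairwise disjoint: the number of tuples containing an intersecting pair is $O\big(r^2(\numberLoosePairs{\ell}+\numberOverlapPairs{\ell})\,|\setAP{\ell}{}|^{r-2}\big)=o(|\setAP{\ell}{}|^r)$, since $\numberLoosePairs{\ell}+\numberOverlapPairs{\ell}=o(|\setAP{\ell}{}|^2)$ (here $\ell^2\log\ell=o(n)$, which follows from $\ell=o(\log n)$), so the disjoint tuples contribute $|\setAP{\ell}{}|^r p^{r\ell}(1+o(1))=\big(\EE(\RVnumberAP{\ell}{})\big)^r(1+o(1))\to (c/2)^r$. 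For the non-disjoint tuples one peels off the at most $r-2$ pairwise disjoint APs and bounds the remainder by $\numberLoosePairs{\ell}p^{2\ell-1}+\numberOverlapPairs{\ell}p^{2\ell-2}+\dots$; substituting $p^\ell=O(\ell/n^2)$ in the Poisson regime makes this $O(\ell^{O(1)}/(np))=o(1)$, because there $np\to\infty$ polynomially while $\ell=o(\log n)$. Hence $\EE[(\RVnumberAP{\ell}{})_r]\to (c/2)^r$ for every $r$; these are the factorial moments of $\Po(c/2)$, which is moment-determined, so convergence in distribution follows.

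For part~(b), put $\bar X:=\RVnumberAP{\ell}{}-\EE(\RVnumberAP{\ell}{})=\sum_T\big(\Ind{\{T\subseteq[n]_p\}}-p^\ell\big)$ and, for fixed $r$, expand $\EE[\bar X^{\,r}]=\sum_{(T_1,\dots,T_r)}\EE\big[\prod_i(\Ind{\{T_i\subseteq[n]_p\}}-p^\ell)\big]$ over all $r$-tuples. A summand vanishes unless every connected component of the intersection graph on $\{T_1,\dots,T_r\}$ has size at least two (centring kills isolated APs; independent components factorise). The leading contribution is from perfect matchings of $[r]$ into $r/2$ intersecting-or-equal pairs with APs from different pairs disjoint; summing the pair weight gives $\sum_{T\cap T'\ne\emptyset}\mathrm{Cov}\big(\Ind{\{T\subseteq[n]_p\}},\Ind{\{T'\subseteq[n]_p\}}\big)=\VV(\RVnumberAP{\ell}{})=\sigma_\ell^2$, so these terms sum to $(r-1)!!\,\sigma_\ell^{\,r}$ for even $r$ and to $0$ for odd $r$. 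Since all those covariances are non-negative, $\sigma_\ell^2\ge\tfrac12\max\{\EE(\RVnumberAP{\ell}{}),\,\numberLoosePairs{\ell}p^{2\ell-1}\}$, so $n^2p^\ell/(\ell-1)\to\infty$ forces $\sigma_\ell\to\infty$; in fact $\sigma_\ell^2\asymp\EE(\RVnumberAP{\ell}{})+\numberLoosePairs{\ell}p^{2\ell-1}$ (the overlap pairs contribute a lower-order term), with the two displayed terms in ratio $\Theta(\psi_\ell)=\Theta(n\ell p^{\ell-1})$. It then remains to bound all the other summands — equivalently, all set partitions of $[r]$ with a block of size $\ge3$ — by $o(\sigma_\ell^{\,r})$; this disposes of the odd moments and the subleading even ones at once, giving $\EE[(\bar X/\sigma_\ell)^r]\to\EE[N^r]$ for $N\sim\No(0,1)$, and as the normal law is moment-determined the CLT follows. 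Equivalently and more cleanly, one bounds the $r$-th cumulant by $|\kappa_r(\RVnumberAP{\ell}{})|\le C^r\sum_{(T_1,\dots,T_r)\text{ connected}}p^{|T_1\cup\dots\cup T_r|}$ and shows this is $o(\sigma_\ell^{\,r})$ for each fixed $r\ge3$.

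The main obstacle is exactly this last estimate, to be made uniform over the admissible range of $p$ and over $\ell=o(\log n)$. The naive bound — build a connected tuple along a spanning tree of its intersection graph, paying $|\setAP{\ell}{}|$ for the root and at most the maximum degree $O(\ell n\log\ell)$ for every further AP — already fails once $\ell\ge5$ and $p$ is near the lower end of the range, because it treats overlapping APs as if they were arbitrary intersecting sets. The remedy, which is the combinatorial core of the proof, is an \emph{algorithmic enumeration}: reveal the APs of a connected tuple one at a time and, whenever a new AP reuses at least two already-revealed points, charge it the strong arithmetic restriction this imposes on its common difference and first term rather than merely ``a point of an earlier AP''. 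The target is a bound of the shape $\sum_{(T_1,\dots,T_r)\text{ connected}}p^{|T_1\cup\dots\cup T_r|}\le C^r\big(\EE(\RVnumberAP{\ell}{})+\numberLoosePairs{\ell}p^{2\ell-1}\big)\varepsilon^{\,r-2}$ with $\varepsilon=\varepsilon(n,p,\ell)=o(\sigma_\ell)$, which then closes the induction on $r$. Propagating the $\ell$-dependence correctly through these counts — so that the estimates survive $\ell\to\infty$, where factors of $\ell$ and harmonic sums $\sum_{i<\ell}1/i$ keep appearing — is the second delicate point, and is where the hypothesis $\ell/\log n\to0$ is consumed.
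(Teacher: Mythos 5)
Your overall strategy is sound, but it diverges from the paper in both parts, in ways worth spelling out. For part (a) the paper does not use factorial moments at all: it applies the Chen--Stein method (Theorem~\ref{thm:AGG}) to the family $(\Indens{T\subseteq[n]_p})_{T\in\setAP{\ell}{}}$ with the dependency graph $\depGraph{\ell}$, and the whole proof reduces to checking $\SteinControl{1}{\depGraph{\ell}}+\SteinControl{2}{\depGraph{\ell}}=O(n^3p^{2\ell})+O(n^3p^{2\ell-1}+n^2\ell^3p^{\ell+1})=o(1)$, using exactly the pair counts you also derive ($\numberLoosePairs{\ell}=\Theta(n^3)$ up to $\ell^{O(1)}$, and $\numberPartitionPairs{\ell}{r}=O(n^2\ell^3)$ for $2\le r\le\ell-1$). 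Your factorial-moment route is a valid alternative and needs the same inputs, but be aware that the ``peeling'' step must control $p^{|T_1\cup\dots\cup T_r|}$ via the full component structure of the intersection graph (a connected component of size $s$ contributes $O(\numberAP{\ell}p^{\ell}\cdot\max\{\psi_\ell,\ell^4p\}^{s-1})$ with $\psi_\ell=np^{\ell-1}\ell$), not via a single pairwise union; bounding by $p^{|T_i\cup T_j|}$ alone is too lossy. The Chen--Stein route buys you, in addition, a quantitative total-variation bound and the observation (the paper's Remark~\ref{Rk:ImprovedPoissonApproximation}) that Poisson approximation persists with diverging parameter for $p$ slightly above the threshold.

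For part (b) your plan coincides with the paper's \emph{alternative} proof (Section~\ref{Sec:GaussianRegimeBivariate}, specialised via Remark~\ref{Rk:BivariateToUnivariate}): matchings give the main term $(k-1)!!\,\sigma_\ell^k$, and the connected non-matching tuples are killed by the algorithmic (BFS-type) enumeration with types recording the overlap size of each newly revealed AP. The paper's \emph{primary} proof of (b) is softer: it invokes the Janson--Mikhailov normality criterion (Theorem~\ref{Thm:MikhailovNormality}) with $M_n=p^\ell\numberAP{\ell}$ and $Q_n=np^{\ell-1}\ell+\ell^4$, which avoids the full moment computation. This matters for the range of validity: as the paper itself concedes, the moment route loses polynomial factors in $\ell$ and only works under $p\ell^9\to0$ and $n^2p^\ell\ell^{-9}\to+\infty$. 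For constant $\ell$ these are automatic, but for $\ell=\ell(n)\to+\infty$ your plan as stated does \emph{not} cover the sliver $n^2p^\ell/\ell\to+\infty$ with $n^2p^\ell=O(\ell^9)$; you must either close that gap by noting that there the Chen--Stein bound still gives Poisson approximation with diverging mean $\lambda_n\to+\infty$ (hence a Gaussian limit after rescaling, which is how the paper bridges it), or switch to the Mikhailov criterion. Please add that bridge explicitly; without it the claimed statement is not fully proved for growing $\ell$.
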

While a priori $\ell$ could be as large as $n$, it is easy to see that the random subset $[n]_p$ with $p=o(1)$ (i.e.\ in the sparse regime) asymptotically almost surely (a.a.s.) does not contain any $\ell$-APs with $\ell=\ell(n)\ge C\log n$ for any constant $C>0$. This follows by a first moment argument, since 
\begin{equation} \label{eq:rem}
\EE(\RVnumberAP{\ell}{})\stackrel{Cl.~\ref{obs:numberAP}}{=} (1\pm o(1))\frac{n^2p^{\ell}}{2(\ell-1)}\le\exp\left[ 2\log n -  C \log n \log (p^{-1}) \right] =o(1),
\end{equation}
and thus by Markov's inequality $\PP(\RVnumberAP{\ell}{}=0)\to 1$. In other words, Theorem~\ref{thm:mainUnivariate} is optimal concerning the range of $\ell$.\footnote{Except for cases where we can only expect convergence along subsequence, for instance if $\ell=\ell(n)$  alternates (periodically) between two or more constants.}

We remark that for constant $\ell\ge 3$, Theorem~\ref{thm:mainUnivariate} hardly comes as a surprise since $\RVnumberAP{\ell}{}$ is a sum of ``weakly dependent'' Bernoulli random variables. The Gaussian approximation follows then from a sufficient criterion due to Mikhailov (cf.\ Theorem~\ref{Thm:MikhailovNormality}), while the Chen-Stein method (cf.\ Theorem~\ref{thm:AGG}) yields the Poisson approximation. Yet, we could not find a proof of this result in the literature. The fact that the proof carries through for growing $\ell=\ell(n)\to+\infty$ is largely due to the fact that the expectation in~\eqref{eq:rem} decreases exponentially quickly in $\ell$.

 Our main result characterises the bivariate fluctuations of the pair $(\RVnumberAP{\ell_1}{},\RVnumberAP{\ell_2}{})$ when both random variables are within their respective Gaussian regimes, as determined in Theorem~\ref{thm:mainUnivariate}. 

\begin{theorem}[Bivariate fluctuations for APs of different lengths]\label{thm:mainBivariate}
   For $i\in\{1,2\}$, let $\ell_i\ge 3$ be either a constant, or $\ell_i=\ell_i(n)\to+\infty$, such that we have $\ell_2<\ell_1$ (point-wise) and $\ell_1/\log n\to 0$. Let $0< p=p(n)< 1$ be such that $p\ell_1^{9}\to 0$ and $n^2p^{\ell_1}\ell_1^{-9}\to+\infty$. Then we have 
   $$
   \left(\frac{\RVnumberAP{\ell_1}{}-\EE(\RVnumberAP{\ell_1}{})}{\sigma_{\ell_1}},\frac{\RVnumberAP{\ell_2}{}-\EE(\RVnumberAP{\ell_2}{})}{\sigma_{\ell_2} }\right)\stackrel{d}{\longrightarrow} \Gaussian{0}{0}{1}{\covarianceLeadingConstant{\ell_1}{\ell_2}}{1}, 
   $$
where $\covarianceLeadingConstant{\ell_1}{\ell_2}$ satisfies 
$$
\begin{cases}
\covarianceLeadingConstant{\ell_1}{\ell_2}=0, & \text{ if } np^{\ell_1-1}\ell_1\to 0;\\
0<\covarianceLeadingConstant{\ell_1}{\ell_2}< 1, & \text{ if } np^{\ell_1-1}\ell_1\to c\in\RR_+ \vee\left[np^{\ell_1-1}\ell_1\to+\infty \wedge\ell_2\text{ is a constant }\right];\\
\covarianceLeadingConstant{\ell_1}{\ell_2}=1, &\text{ if } np^{\ell_1-1}\ell_1\to+\infty \wedge \ell_2=\ell_2(n)\to+\infty.
\end{cases}
$$
\end{theorem}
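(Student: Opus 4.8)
The plan is to establish bivariate asymptotic normality via the method of moments, following the standard route: one shows that all joint (centred, normalised) moments of the pair $(\RVnumberAP{\ell_1}{},\RVnumberAP{\ell_2}{})$ converge to those of the claimed bivariate Gaussian, and since a bivariate Gaussian is determined by its moments, this yields convergence in distribution. Concretely, writing $\RVnumberAPCentralised{\ell_i}{}:=(\RVnumberAP{\ell_i}{}-\EE\RVnumberAP{\ell_i}{})/\sigma_{\ell_i}$, one must show for every pair of nonnegative integers $(r_1,r_2)$ that $\EE\bigl[(\RVnumberAPCentralised{\ell_1}{})^{r_1}(\RVnumberAPCentralised{\ell_2}{})^{r_2}\bigr]$ converges to the corresponding mixed moment of $\Gaussian{0}{0}{1}{\covarianceLeadingConstant{\ell_1}{\ell_2}}{1}$. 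By the Cramér--Wold device this is equivalent to proving that every fixed linear combination $s\RVnumberAPCentralised{\ell_1}{}+t\RVnumberAPCentralised{\ell_2}{}$ is asymptotically $\No(0,s^2+2st\,\covarianceLeadingConstant{\ell_1}{\ell_2}+t^2)$, again via moments. The first preparatory step is therefore to pin down the variances $\sigma_{\ell_i}^2$ and the covariance $\mathrm{Cov}(\RVnumberAP{\ell_1}{},\RVnumberAP{\ell_2}{})$ precisely enough to identify $\covarianceLeadingConstant{\ell_1}{\ell_2}=\lim \mathrm{Cov}(\RVnumberAP{\ell_1}{},\RVnumberAP{\ell_2}{})/(\sigma_{\ell_1}\sigma_{\ell_2})$; this requires a careful enumeration of pairs $(T_1,T_2)$ with $T_i$ an $\ell_i$-AP that share at least two common elements (since pairs sharing $\le 1$ element are independent and contribute nothing to the covariance), organised by the ``type'' of their intersection pattern. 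The three cases in the statement then emerge according to whether the dominant contribution comes from pairs where $T_1\supseteq$ (a translate/dilate of a sub-AP spanning) $T_2$ or from genuinely overlapping pairs, and the threshold $\psi_{\ell_1}=np^{\ell_1-1}\ell_1$ governs the relative size of these two families.

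The heart of the argument is the moment computation: expand $\bigl(\sum_{T\in\setAP{\ell_1}{}}\Ind{T\subseteq[n]_p}-\EE\bigr)^{r_1}\bigl(\sum_{T\in\setAP{\ell_2}{}}\Ind{T\subseteq[n]_p}-\EE\bigr)^{r_2}$ and classify the resulting tuples $(T_1,\dots,T_{r_1},T'_1,\dots,T'_{r_2})$ of APs according to the structure of the overlap (hyper)graph on their union. The key combinatorial principle, familiar from the analogous problem for subgraph counts in $G(n,m)$ or $G(n,p)$, is that after centring, only tuples whose overlap graph has every connected component of size exactly two ("pairings") contribute to leading order; components of size one vanish by centring, and components of size $\ge 3$ are lower-order because each extra AP forced to overlap costs a factor that is $o(1)$ relative to the normalisation. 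This is precisely where the enumeration of collections of APs via an exploration/exploration-type process (advertised in the abstract and presumably developed earlier in the paper) does the work: one needs sharp upper bounds on $\numberAPTuples{}$-type quantities counting AP-collections with a prescribed overlap structure, uniformly in the growing parameter $\ell$, and the hypotheses $p\ell_1^9\to 0$ and $n^2p^{\ell_1}\ell_1^{-9}\to+\infty$ are exactly the slack needed to absorb the polynomial-in-$\ell$ losses incurred when bounding the number of ways APs can intersect. Once the dominant ``pairing'' tuples are isolated, their total contribution factorises (asymptotically) into a product over the pairs, each pair contributing either a variance-type term or a covariance-type term, and a counting argument over which of the $r_1+r_2$ indices are paired with which reproduces exactly Wick's formula / Isserlis' theorem for the target Gaussian — giving the mixed moments of $\Gaussian{0}{0}{1}{\covarianceLeadingConstant{\ell_1}{\ell_2}}{1}$.

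The main obstacle, I expect, is controlling the error terms uniformly as $\ell_1,\ell_2\to+\infty$ simultaneously with $n$. For fixed $\ell$ this is routine bookkeeping, but when $\ell=\ell(n)\to\infty$ one must quantify, with explicit powers of $\ell$, how many ways a sub-AP of a given length sits inside a longer AP, how many ways two APs of lengths $\ell_1,\ell_2$ can meet in a prescribed number of points, and how these counts propagate through the $r_1+r_2$-fold sum — all while the normalisation $\sigma_{\ell_i}$ itself depends delicately on which regime of $\psi_{\ell_1}$ we are in. A secondary but genuine difficulty is handling the boundary between the three regimes for $\covarianceLeadingConstant{\ell_1}{\ell_2}$: showing $\covarianceLeadingConstant{\ell_1}{\ell_2}=0$ when $\psi_{\ell_1}\to0$ amounts to showing the covariance is asymptotically negligible against $\sigma_{\ell_1}\sigma_{\ell_2}$, which requires that $\sigma_{\ell_1}^2$ is dominated by its ``diagonal'' part $\EE\RVnumberAP{\ell_1}{}$ in that regime; showing $\covarianceLeadingConstant{\ell_1}{\ell_2}=1$ when $\psi_{\ell_1}\to\infty$ and $\ell_2\to\infty$ requires the subtler fact that in that regime $\RVnumberAP{\ell_1}{}$ and $\RVnumberAP{\ell_2}{}$ are, to leading order, driven by the \emph{same} underlying fluctuations (essentially the number of long APs, each of which deterministically carries a fixed number of shorter ones, so $\RVnumberAPCentralised{\ell_2}{}$ becomes an asymptotic copy of $\RVnumberAPCentralised{\ell_1}{}$). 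Establishing this last point rigorously — rather than just at the level of the covariance — is what makes the degenerate case genuinely informative, and I would prove it by showing $\EE\bigl[(\RVnumberAPCentralised{\ell_1}{}-\RVnumberAPCentralised{\ell_2}{})^2\bigr]\to0$, reducing it to the variance and covariance estimates already in hand.
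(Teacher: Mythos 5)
Your overall architecture --- Cram\'er--Wold plus the method of moments, with the dominant contribution coming from tuples whose overlap graph is a perfect matching and the remaining configurations controlled by an exploration-type enumeration with explicit powers of $\ell$ --- is exactly the route the paper takes, and your suggestion to prove the degenerate case via $\EE\bigl[(\RVnumberAPCentralised{\ell_1}{}/\sigma_{\ell_1}-\RVnumberAPCentralised{\ell_2}{}/\sigma_{\ell_2})^2\bigr]\to 0$ is an equivalent reformulation of the paper's $L^2$ argument for $\covarianceLeadingConstant{\ell_1}{\ell_2}=1$.

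However, there is a genuine error at the foundation of your second-moment computation. You assert that pairs of APs ``sharing $\le 1$ element are independent and contribute nothing to the covariance,'' and accordingly plan to enumerate only pairs meeting in at least two points. Only \emph{disjoint} pairs are independent: if $|T\cap T'|=1$ then $\EE(\Indens{T\subseteq[n]_p}\Indens{T'\subseteq[n]_p})=p^{\ell_1+\ell_2-1}\neq p^{\ell_1+\ell_2}$, so each such \emph{loose pair} contributes $(1-p)\,p^{\ell_1+\ell_2-1}$ to the covariance. There are $\Theta(n^3)$ loose pairs versus $\Theta(n^2(\ell_1-\ell_2+1)/\ell_2)$ overlap pairs, so the loose pairs are in fact one of the two \emph{dominant} families (all intermediate intersection sizes $2\le r\le\ell_2-1$ are negligible); the threshold $\psi_{\ell_1}=np^{\ell_1-1}\ell_1$ in the statement is precisely the ratio of the loose-pair contribution to the overlap-pair contribution. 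Discarding loose pairs forces $\sigma_\ell^2\approx\EE(\RVnumberAP{\ell}{})$ in every regime, which is off by an unbounded factor whenever $np^{\ell-1}\ell\to+\infty$, and collapses the trichotomy for $\covarianceLeadingConstant{\ell_1}{\ell_2}$ to the wrong answer. Relatedly, the single hardest estimate in the paper is the sharp asymptotic count of loose pairs, $\numberMixedLoosePairs{\ell_1}{\ell_2}/n^3\to\int_0^1\covarianceFunction{\ell_1}\covarianceFunction{\ell_2}$, obtained via an integral representation uniform in growing $\ell_1,\ell_2$; your plan omits this entirely, yet without a constant-level (not merely order-of-magnitude) evaluation one cannot identify $\covarianceLeadingConstant{\ell_1}{\ell_2}$ as a ratio of inner products, nor decide when the Cauchy--Schwarz inequality defining it is strict --- which is exactly what separates the $0<\covarianceLeadingConstant{\ell_1}{\ell_2}<1$ cases from $\covarianceLeadingConstant{\ell_1}{\ell_2}=1$.
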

Interestingly, the strength of the correlation is characterised by the asymptotic behaviour of the function 
\begin{equation}\label{eq:psi}
\psi_{\ell_1}=\psi_{\ell_1}(n):=np^{\ell_1-1}\ell_1,
\end{equation}
which originates from the combinatorial structure of tuples of overlapping APs. There are two structures, \emph{loose pairs} and \emph{overlap pairs} (see Definition~\ref{def:LooseOverlapPairs}), which compete to dominate the centralised second moments of the pair $(\RVnumberAP{\ell_1}{},\RVnumberAP{\ell_2}{})$. The function $\psi_{\ell_1}$ is obtained as the ratio of the contribution of loose pairs by that of overlap pairs (of $\ell_1$-APs); when $ \psi_{\ell_1}\to 0$, overlap pairs dominate, and when $\psi_{\ell_1}\to+\infty$, loose pairs dominate. We call the former the \emph{overlap pair regime}, and the latter the \emph{loose pair regime}. An explicit expression of $\covarianceLeadingConstant{\ell_1}{\ell_2}$ is given in Lemma~\ref{lem:kappa} and its proof; its derivation is surprisingly intricate and involves an integral representation.

Furthermore, we want to highlight that when $\ell_2=\ell_2(n)\to+\infty$ (and thus also $\ell_1=\ell_1(n)\to+\infty$), the  random variables $\RVnumberAP{\ell_1}{}$ and $\RVnumberAP{\ell_2}{}$ are either asymptotically uncorrelated, or converge to the \emph{same} random variable (once renormalised). However, in all other cases, there exists a regime where the asymptotic correlation is non-trivial.

Lastly, we remark that the conditions are slightly more restrictive due to technical reasons, we strongly believe that the result remains true under the weaker assumptions $n^2p^{\ell_1-1}\ell^{-1}\to+\infty$ and $p\to 0$, which characterise the sparse Gaussian regime for $\ell_1$-APs, cf.\ Theorem~\ref{thm:mainUnivariate}\eqref{thm:mainUnivariateNormal}.

\subsection{Related work}
In the literature, the study of $\RVnumberAP{\ell}{}$ for random subsets of the integers is largely focused on $\ell\ge3$ being a constant and estimating the probability of large deviations from its mean, i.e.\ the \emph{upper tail} probabilities $\PP\left(\RVnumberAP{\ell}{}\ge (1+\eps)\EE(\RVnumberAP{\ell}{})\right)$, and the \emph{lower tail} probabilities $\PP\left(\RVnumberAP{\ell}{}\le (1-\eps)\EE(\RVnumberAP{\ell}{})\right)$. For a recent survey on large deviations in random graphs (and related combinatorial structures) see~\cite{Chatterjee}. 

For the upper tail, Janson and Ruci\'nski~\cite{JansonRucinski} obtained upper and lower bounds on $-\log\PP\left(\RVnumberAP{\ell}{}\ge (1+\eps)\EE(\RVnumberAP{\ell}{})\right)$ being apart by a factor of $\log(1/p)$ by extending an earlier result by Janson, Oleszkiewicz, and Ruci\'nski~\cite{JansonOleszkiewiczRucinski} on large deviations for subgraph counts in random graphs. Subsequently, Warnke~\cite{Warnke} closed this gap by proving that
\begin{align*}
-\log \PP\!\left( \RVnumberAP{\ell}{} \geq (1 + \varepsilon) \EE( \RVnumberAP{\ell}{} ) \right) = \Theta_\varepsilon\!\left( \Phi( \EE( \RVnumberAP{\ell}{} ) ) \right) , \qquad \Phi(x) := \min\{ x, \sqrt{x} \log(1/p) \},
\end{align*}
and also supplying the dependency on $\eps$ of the implied constants in $\Theta_\eps$. Notably, provided that $p$ is in the loose pair regime (more precisely, $\psi_{\ell}\ge \log n$, where $\psi_{\ell}=np^{\ell-1}\ell$ as in~\eqref{eq:psi}) the results in~\cite{Warnke} also extend to \emph{moderate variations}, i.e.\ events of the form $\{\RVnumberAP{\ell}{}\ge \EE(\RVnumberAP{\ell}{})+t\}$ for any $t\ge \sigma_{\ell}$. Complementing these results, Bhattacharya, Ganguly, Shao, and Zhao~\cite{BhattacharyaEtAl} pinned down the precise large deviation rate function for ``sufficiently large'' $p$. By contrast to the approach in~\cite{Warnke}, the proof in~\cite{BhattacharyaEtAl} builds on the non-linear large deviation principle by Chatterjee and Dembo~\cite{ChatterjeeDembo} and its refinement due to Eldan~\cite{Eldan} in terms of the concept of \emph{Gaussian width}, a particular notion of complexity. Recently, Bri\"et and Gopi~\cite{BrietGopi} derived an upper bound on the Gaussian width leading to an improvement of the lower bound on $p$ given in~\cite{BhattacharyaEtAl}. The special case $\ell=3$ was already included in~\cite{ChatterjeeDembo}.

On the other hand, the lower tail has received less attention: for all constants $\ell\ge 3$, Janson and Warnke~\cite{JansonWarnke} determined the large deviation rate function up to constants to be
\begin{align*}
-\log \PP\!\left( \RVnumberAP{\ell}{} \geq (1 - \varepsilon) \EE( \RVnumberAP{\ell}{} ) \right) = \Theta(\eps^2 \min\{\EE(\RVnumberAP{\ell}{}),np\}),
\end{align*}
while Mousset, Noever, Panagiotou, and Samotij~\cite{MoussetNoeverPanagiotouSamotij} concentrated on the probability of $[n]_p$ to be $\ell$-AP free, and expressed $-\log \PP(\RVnumberAP{\ell}{}=0)$ as an alternating sum of certain \emph{joint cumulants} defined in terms of the \emph{dependency graph} associated to  $\RVnumberAP{\ell}{}$. The results on $\ell$-APs in~\cite{MoussetNoeverPanagiotouSamotij} hold only for $p$ within the overlap pair regime ($\psi_{\ell}=o(1)$, where $\psi_{\ell}=np^{\ell-1}\ell$ as in~\eqref{eq:psi}). 

We complement the literature on large and moderate deviations by considering typical deviations and thereby determining the limiting distribution of $\RVnumberAP{\ell}{}$ not only for all constants $\ell\ge 3$  but also when $\ell=\ell(n)\to+\infty$. Additionally, we also investigate the interaction of the number of APs of different length occurring in $[n]_p$, i.e.\ typical fluctuations of the pair $(\RVnumberAP{\ell}{},\RVnumberAP{\ell'}{})$. Strikingly, we find a significantly different behaviour of their bivariate fluctuations in the overlap pair regime, as compared to the loose pair regime. By contrast to the results on moderate deviations in~\cite{Warnke} or the result in~\cite{MoussetNoeverPanagiotouSamotij} which work only in one of the two regimes, we employ the same approach in both regimes.

\subsection{Proof method and outline}
The main goal of this article lies in the analysis of bivariate fluctuations of the pair $(\RVnumberAP{\ell_1}{},\RVnumberAP{\ell_2}{})$ based on the method of moments: we show that the joint moments of $(\RVnumberAP{\ell_1}{},\RVnumberAP{\ell_2}{})$, once centred and rescaled, converge to the moments of a Gaussian random vector, which ensures the convergence in distribution. More formally, we apply the combination of the following two classical results.
\begin{theorem}[e.g.\ Theorem~30.2 in~\cite{Billingsley08}]\label{thm:MethodOfMoments}
Let $\cY$ be a random variable which is determined by its moments, and let $(Y_n)_{n\in\NN}$ be a sequence of random variables having finite moments of all orders. If $\lim_{n\to+\infty}\EE(Y_n^k)=\EE(\cY^k)$ for all $k\in\NN$, then $Y_n\stackrel{d}{\tendsto{n}{+\infty}}\cY$.
\end{theorem}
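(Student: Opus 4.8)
The plan is to prove the classical moment-convergence criterion (Theorem~\ref{thm:MethodOfMoments}) via the Fr\'echet--Shohat argument combined with a tightness/uniqueness step. Throughout, the key hypothesis is that $\cY$ is \emph{determined by its moments}: the sequence $(\EE(\cY^k))_{k\in\NN}$ does not coincide with the moment sequence of any other probability distribution.

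\emph{Step 1: Relative compactness of the laws $\cL(Y_n)$.} Since $\EE(Y_n^2)\to\EE(\cY^2)<+\infty$, the sequence $(\EE(Y_n^2))_n$ is bounded, say by some constant $C$. Hence by Markov's inequality $\PP(|Y_n|\ge R)\le C/R^2$ uniformly in $n$, so the family $\{\cL(Y_n)\}_{n\in\NN}$ is tight. By Prokhorov's theorem it is relatively compact in the topology of weak convergence: every subsequence has a further subsequence converging in distribution to some probability measure on $\RR$.

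\emph{Step 2: Identification of every subsequential limit.} Let $(Y_{n_j})_j$ be any subsequence with $Y_{n_j}\stackrel{d}{\longrightarrow}\cZ$ for some random variable $\cZ$. I claim $\EE(\cZ^k)=\EE(\cY^k)$ for all $k$. Fix $k$. The hypothesis gives $\EE(Y_{n_j}^{2k})\to\EE(\cY^{2k})$, hence $\sup_j\EE(Y_{n_j}^{2k})<+\infty$, which means the $k$-th powers $Y_{n_j}^k$ are uniformly integrable (a bounded $(1+\delta)$-moment, here even a bounded second moment of $Y_{n_j}^k$, forces UI). Combined with $Y_{n_j}^k\stackrel{d}{\longrightarrow}\cZ^k$ (continuous mapping), uniform integrability upgrades convergence in distribution to convergence of expectations: $\EE(Y_{n_j}^k)\to\EE(\cZ^k)$. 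But by assumption $\EE(Y_{n_j}^k)\to\EE(\cY^k)$ as well, so $\EE(\cZ^k)=\EE(\cY^k)$. As $k$ was arbitrary, $\cZ$ and $\cY$ share the same moment sequence; since $\cY$ is determined by its moments, $\cZ\stackrel{d}{=}\cY$.

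\emph{Step 3: Conclusion via a subsequence principle.} Steps 1--2 show that every subsequence of $(\cL(Y_n))_n$ has a further subsequence converging weakly to $\cL(\cY)$, the \emph{same} limit in all cases. A standard fact (if every subsequence of a sequence in a metrizable space has a sub-subsequence converging to a fixed point $x$, then the whole sequence converges to $x$) then yields $Y_n\stackrel{d}{\longrightarrow}\cY$, which is the assertion.

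\emph{Main obstacle.} The only genuinely delicate point is Step~2, and specifically the role of the moment-determinacy hypothesis: convergence of all moments alone does \emph{not} in general force convergence in distribution (the classical counterexample uses the log-normal distribution and its Stieltjes-class perturbations). The hypothesis that $\cY$ is determined by its moments is exactly what rules this out, and it is used precisely to identify the subsequential limit $\cZ$ once we know $\cZ$ and $\cY$ have equal moments. In the applications of this paper, $\cY$ is either a Poisson random variable or a (one- or multi-dimensional) Gaussian, both of which are moment-determinate (e.g.\ by Carleman's condition, since their moments grow slowly enough), so the hypothesis is automatically satisfied there. I would also remark that the uniform-integrability argument in Step~2 requires only that $\EE(Y_n^{2k})$ be bounded for each $k$, which is implied by the stated convergence $\EE(Y_n^{2k})\to\EE(\cY^{2k})<+\infty$; no moment bound beyond the ones already assumed is needed.
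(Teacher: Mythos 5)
Your proposal is correct, but note that the paper itself gives no proof of this statement: it is quoted as a classical result (Theorem~30.2 in Billingsley), and your argument is exactly the standard Fr\'echet--Shohat proof found there --- tightness from the bounded second moments, identification of every subsequential limit via uniform integrability of the powers $Y_{n_j}^k$ together with the moment-determinacy of $\cY$, and the subsequence principle for weak convergence. So there is nothing to compare against within the paper; your write-up is a complete and accurate rendition of the cited textbook argument, including the correct observation that moment-determinacy is indispensable and is satisfied by the Poisson and Gaussian limits used in this paper.
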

The same principle transfers to multivariate random variables, by application of the Cram\'er-Wold device.
\begin{theorem}[Cram\'er-Wold device, e.g.\ Theorem~29.4 in~\cite{Billingsley08}]\label{thm:CramerWoldDevice}
For any $r\in \NN$, let $\cY=(\cY_1,\dots, \cY_r)$ and $Y_n=(Y_{n,1},\dots, Y_{n,r})$, $n\in\NN$, be random vectors. Then $Y_n\stackrel{d}{\longrightarrow}\cY$ if and only if 
$$
\sum_{i = 1}^{r}u_iY_{n,i}\stackrel{d}{\longrightarrow}\sum_{i = 1}^{r}u_i\cY_i, \quad \forall u_1,\dots, u_k\in\RR.
$$
\end{theorem}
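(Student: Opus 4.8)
\textbf{Proof proposal for Theorem~\ref{thm:CramerWoldDevice} (Cram\'er-Wold device).}

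The plan is to deduce the multivariate convergence statement from its one-dimensional analogue via the continuity theorem for characteristic functions, exploiting the fact that a joint characteristic function in $\RR^r$ is entirely recovered from the characteristic functions of all one-dimensional linear projections. The ``only if'' direction is essentially immediate: if $Y_n\stackrel{d}{\longrightarrow}\cY$, then for any fixed $u=(u_1,\dots,u_r)\in\RR^r$ the map $x\mapsto \sum_{i=1}^r u_i x_i$ is continuous from $\RR^r$ to $\RR$, so the continuous mapping theorem gives $\sum_{i=1}^r u_i Y_{n,i}\stackrel{d}{\longrightarrow}\sum_{i=1}^r u_i\cY_i$. The content is in the ``if'' direction.

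For the ``if'' direction, I would argue as follows. Fix an arbitrary vector $t=(t_1,\dots,t_r)\in\RR^r$ and consider the scalar random variables $S_n:=\sum_{i=1}^r t_i Y_{n,i}$ and $S:=\sum_{i=1}^r t_i\cY_i$; by hypothesis $S_n\stackrel{d}{\longrightarrow} S$. Convergence in distribution of real random variables implies convergence of characteristic functions (L\'evy's continuity theorem, one direction), so $\EE(e^{\mathrm{i}\theta S_n})\to\EE(e^{\mathrm{i}\theta S})$ for every $\theta\in\RR$; evaluating at $\theta=1$ yields $\EE\bigl(e^{\mathrm{i}\sum_{i=1}^r t_i Y_{n,i}}\bigr)\to\EE\bigl(e^{\mathrm{i}\sum_{i=1}^r t_i\cY_i}\bigr)$. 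Since $t$ was arbitrary, the characteristic function of $Y_n$ converges pointwise on $\RR^r$ to the characteristic function of $\cY$. The limit function $\varphi_{\cY}(t)=\EE\bigl(e^{\mathrm{i}\langle t,\cY\rangle}\bigr)$ is continuous at the origin (it is a genuine characteristic function of the $\RR^r$-valued random vector $\cY$), so the multivariate L\'evy continuity theorem applies and gives $Y_n\stackrel{d}{\longrightarrow}\cY$.

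The main obstacle, and the only place where real work is hidden, is the multivariate L\'evy continuity theorem invoked in the last step: one must know that pointwise convergence of characteristic functions $\varphi_{Y_n}\to\varphi$ together with continuity of $\varphi$ at $0$ forces tightness of $(Y_n)$ and hence convergence in distribution to the law with characteristic function $\varphi$. I would not reprove this; it is a standard cornerstone of measure-theoretic probability (the cited source, Billingsley's \emph{Probability and Measure}, develops exactly this machinery), so I would cite it directly. One subtlety worth flagging in the write-up is that we only assume convergence of the \emph{linear combinations} $\sum_i u_i Y_{n,i}$ for real coefficients, which is precisely what lets us reach $\varphi_{Y_n}(t)\to\varphi_{\cY}(t)$ for all $t\in\RR^r$ by taking $\theta=1$ and $u=t$ in the scalar statement; no assumption beyond that is needed. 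If one prefers to avoid characteristic functions entirely, an alternative is to note that convergence in distribution of all linear projections implies tightness of $(Y_n)$ directly (a sequence tight in each coordinate is tight jointly), then pass to a subsequential weak limit $\mu$, observe that every linear projection of $\mu$ must coincide with the corresponding limit law $\sum_i u_i\cY_i$, and invoke the fact that a Borel probability measure on $\RR^r$ is determined by the one-dimensional marginals of all its linear images (again a Fourier-uniqueness statement) to conclude $\mu=\mathcal{L}(\cY)$; since every subsequential limit equals $\mathcal{L}(\cY)$, the whole sequence converges. Either route reduces the proposition to the uniqueness theorem for Fourier transforms on $\RR^r$, which is the genuine input.
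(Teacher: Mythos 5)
Your proposal is correct, but there is nothing in the paper to compare it against: Theorem~\ref{thm:CramerWoldDevice} is quoted as a classical result (Theorem~29.4 in Billingsley's \emph{Probability and Measure}) and the paper supplies no proof of its own. Your argument — the ``only if'' direction by the continuous mapping theorem, and the ``if'' direction by noting that convergence of all scalar projections $\sum_i t_i Y_{n,i}$ yields pointwise convergence of the characteristic functions $\varphi_{Y_n}(t)\to\varphi_{\cY}(t)$ on $\RR^r$ and then invoking the multivariate L\'evy continuity theorem (or, alternatively, coordinatewise tightness plus Fourier uniqueness of subsequential limits) — is precisely the standard textbook proof that the cited reference gives, so there is no gap and no genuinely different route to discuss.
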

Our approach for the analysis of the (normalised) joint moments was inspired by a recent result of Gao and Sato~\cite{GaoSato}  determining the limiting distribution of the number of matchings of size $\ell=\ell(n)$ in $G(n,p)$ to be either a Normal or a Log-normal distribution. It is well-known that the odd moments of a centred, multivariate Gaussian distribution vanish, while the even moments can be expressed combinatorially: for $k\in\NN$ the $2k$-th moment is given by a sum over all perfect matchings of the set $[2k]$. Thus the heart of our proof lies in showing that the (even and centred) joint moments  of $(\RVnumberAP{\ell_1}{},\RVnumberAP{\ell_2}{})$ are dominated by a similar matching structure. In fact, we will see that this combinatorial structure is encoded in the \emph{dependency graph} $\Gamma$ (cf.\ Definition~\ref{def:depGraph}) associated with the pair $(\RVnumberAP{\ell_1}{},\RVnumberAP{\ell_2}{})$. Depending on the range of $p$, the main contribution will come from matchings consisting of overlap pairs and/or loose pairs, and can be determined explicitly. It then remains to bound the contributions of all non-matching configurations. This last step is based on an algorithmic exploration of the components in $\Gamma$; a similar argument was previously used by Bollob\'as, Cooley, Kang, and the second author~\cite{BollobasCooleyKangKoch} in the context of jigsaw percolation on random hypergraphs. By contrast, in~\cite{GaoSato} this last step was based on the \emph{switching method} introduced by McKay~\cite{McKay}, which turned out to be difficult to apply in the setting of APs due to their arithmetic structure.

We close with an outline of the article: Section~\ref{Sec:Preliminaries} focusses on counting APs and pairs of APs, and deriving the joint second moments from these. Since we require a high level of precision, the counting argument for loose pairs of APs turns out to be surprisingly challenging. In Section~\ref{Sec:Univariate} we complete the proof of Theorem~\ref{thm:mainUnivariate} based on two sufficient criteria from the literature. The higher joint moments of the pair $(\RVnumberAP{\ell_1}{},\RVnumberAP{\ell_2}{})$ are analysed in Section~\ref{Sec:GaussianRegimeBivariate}, where we also complete the proof of Theorem~\ref{thm:mainBivariate} and provide an alternative proof of Theorem~\ref{thm:mainUnivariate}\eqref{thm:mainUnivariateNormal}. We then conclude with a discussion of open problems in Section~\ref{Sec:Concluding}.

\section{Preliminaries: counting APs and pairs of APs}\label{Sec:Preliminaries}
We start out with determining the asymptotics related to the set of APs in $[n]$. First, we consider the total number of $\ell$-APs, denoted by $\numberAP{\ell}$.
\begin{claim}\label{obs:numberAP}
For any $3\le \ell=\ell(n)\le n$, we have 
$$
\numberAP{\ell}= 
\begin{cases}
(1\pm o(1))\frac{n^2}{2(\ell-1)}& \text{ if } \ell/n\to 0,\\
\Theta(n)& \text{ if } \ell/n\to c\in (0,1),\\
(1\pm o(1))(n-\ell+1)& \text{ if } \ell/n\to 1. 
\end{cases}
$$
In particular, the following asymptotics holds for all $3\le \ell=\ell(n)\le n$:
$$
\numberAP{\ell}= \Theta(n(n-\ell+1)\ell^{-1}).
$$
Furthermore, for any $3\le \ell=\ell(n)=o(n)$, we have 
$$
\EE(\RVnumberAP{\ell}{})=\numberAP{\ell}p^\ell=(1\pm o(1))\frac{n^2p^\ell}{2(\ell-1)}.
$$
\end{claim}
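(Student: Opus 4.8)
The plan is to count $\ell$-APs in $[n]$ directly by parametrising them via their first term $a$ and common difference $b$. An $\ell$-tuple $(a, a+b, \dots, a+(\ell-1)b)$ of distinct elements lies in $[n]$ if and only if $a \ge 1$, $b \ge 1$, and $a + (\ell-1)b \le n$ (replacing $b$ by $-b$ if necessary, we may count ordered APs with $b>0$ and the tuple $(a, a+b,\dots)$ is determined; note an $\ell$-AP with $b=0$ is excluded since the elements must be distinct). Hence
\begin{equation*}
\numberAP{\ell} = \sum_{b=1}^{\lfloor (n-1)/(\ell-1)\rfloor} \bigl(n - (\ell-1)b\bigr) = \sum_{b=1}^{B}\bigl(n-(\ell-1)b\bigr), \qquad B := \left\lfloor \frac{n-1}{\ell-1}\right\rfloor .
\end{equation*}
This is an arithmetic sum, so $\numberAP{\ell} = Bn - (\ell-1)\binom{B+1}{2} = Bn - (\ell-1)B(B+1)/2$. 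First I would treat the regime $\ell/n \to 0$: here $B = (1\pm o(1))\,n/(\ell-1)$ (the floor and the $-1$ in the numerator are lower-order since $B \to \infty$), and substituting gives $\numberAP{\ell} = (1\pm o(1))\bigl(n^2/(\ell-1) - n^2/(2(\ell-1))\bigr) = (1\pm o(1))\,n^2/(2(\ell-1))$, as claimed. The cases $\ell/n \to c \in (0,1)$ and $\ell/n \to 1$ follow from the same closed form: when $\ell = \Theta(n)$, $B = \Theta(1)$ (indeed $B \ge 1$ since $\ell \le n$ forces $\ell - 1 \le n-1$), so $\numberAP{\ell} = \Theta(n)$; and when $\ell/n \to 1$ we have $B=1$ for $n$ large (as $(n-1)/(\ell-1) \to 1$ from above but stays below $2$ eventually), giving $\numberAP{\ell} = n - (\ell-1) = n-\ell+1$.

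For the uniform bound $\numberAP{\ell} = \Theta(n(n-\ell+1)\ell^{-1})$, I would verify it is consistent with each of the three cases above — when $\ell = o(n)$, $n-\ell+1 = (1\pm o(1))n$ so $n(n-\ell+1)\ell^{-1} = \Theta(n^2/\ell) = \Theta(n^2/(\ell-1))$; when $\ell = \Theta(n)$, $n-\ell+1$ could range over $\Theta(n)$ down to $\Theta(1)$, and correspondingly $n(n-\ell+1)\ell^{-1} = \Theta(n-\ell+1)$, matching $\numberAP{\ell} = \Theta(n) \cdot \Theta((n-\ell+1)/n)$ — but this requires a little care, so I would instead just prove it directly from the closed form: $\numberAP{\ell} = \sum_{b=1}^{B}(n-(\ell-1)b)$, and since each summand lies between $n - (\ell-1)B \ge n - (n-1) \ge 1$ and $n - (\ell-1) = n-\ell+1$ — wait, the smallest summand $n-(\ell-1)B$ can be much smaller than $n-\ell+1$. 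The cleaner route: the largest summand ($b=1$) is $n-\ell+1$ and the sum of $B$ terms of an arithmetic progression decreasing to something nonnegative is $\Theta(B \cdot (\text{largest term}))$ only when the terms don't decay too fast; in fact $\sum_{b=1}^B (n-(\ell-1)b) \ge \tfrac12 B(n-(\ell-1)B) $ hmm. I will handle it by noting $\sum_{b=1}^{B}(n-(\ell-1)b) = \tfrac{B}{2}\bigl((n-\ell+1) + (n-(\ell-1)B)\bigr)$, both endpoints are in $[1, n]$, and $B = \Theta(n/\ell)$ since $\lfloor (n-1)/(\ell-1)\rfloor \asymp n/\ell$ for $3 \le \ell \le n$; combined with $n - (\ell-1)B = n \bmod (\ell-1) \cdot(\text{stuff})$, one gets the two-sided bound $\Theta(n(n-\ell+1)/\ell)$ after checking $(n-\ell+1) + (n-(\ell-1)B) = \Theta(n-\ell+1)$ (the second endpoint is at most the first plus $\ell-1 \le$ ... ), which I expect to be the fiddliest point but is elementary.

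Finally, for the expectation: by linearity, $\EE(\RVnumberAP{\ell}{}) = \sum_{T \in \setAP{\ell}{}} \PP(T \subseteq [n]_p)$, and since each $\ell$-AP consists of $\ell$ distinct elements of $[n]$, each included independently with probability $p$, we get $\PP(T \subseteq [n]_p) = p^{\ell}$ for every $T$; hence $\EE(\RVnumberAP{\ell}{}) = \numberAP{\ell}\,p^{\ell}$, and the stated asymptotic for $\ell = o(n)$ follows immediately from the first case. The main obstacle throughout is purely bookkeeping — keeping track of floors and the precise $\Theta$-constants in the intermediate regime — rather than any conceptual difficulty; the AP-counting itself is a one-line parametrisation.
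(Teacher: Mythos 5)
Your proposal is correct and follows essentially the same route as the paper: parametrise $\ell$-APs by their (positive) common difference, evaluate the resulting arithmetic sum in closed form, and split into the three regimes, with the expectation following from linearity and independence. The only cosmetic difference is in the uniform bound $\numberAP{\ell}=\Theta(n(n-\ell+1)\ell^{-1})$, where you average the first and last summands (observing the last lies in $[1,\,n-\ell+1]$ and $B=\Theta(n/\ell)$) while the paper extracts the exact remainder term $f(R,\ell)\in[0,(\ell-1)/8]$ from the closed form — both are the same elementary computation.
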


\begin{proof}
Let $R:=\left(\frac{n-1}{\ell-1}-\left\lfloor\frac{n-1}{\ell-1}\right\rfloor\right)\cdot (\ell-1)$ and observe that $0\le R\le \ell-2$. We have
\begin{align*}
\numberAP{\ell}&=\sum_{\comDiff{}=1}^{\left\lfloor\frac{n-1}{\ell-1}\right\rfloor}\sum_{m=1}^{n}\Indens{m+(\ell-1)\comDiff{}\le n}=\sum_{\comDiff{}=1}^{\left\lfloor\frac{n-1}{\ell-1}\right\rfloor}(n-\comDiff{}(\ell-1))\\
&=\left\lfloor\frac{n-1}{\ell-1}\right\rfloor\cdot n- (\ell-1)\binom{\left\lfloor\frac{n-1}{\ell-1}\right\rfloor+1}{2}=\frac{n(n-\ell+1)}{2(\ell-1)}+f(R,\ell),
\end{align*}
where $f(R,\ell):= \frac{(R+1)(\ell-1)-(R+1)^2}{2(\ell-1)}$. Furthermore, we observe that for all $\ell$ we have $0\le f(R,\ell)\le (\ell-1)/8$. It remains to distinguish three cases:

$\bullet $ if $\ell/n\to 0$, then $f(R,\ell)=o(n)=o(n^2/\ell)$ and the claim follows immediately,

$\bullet $ if $\ell/n\to c$ for some constant $c\in (0,1)$, then $f(R,\ell)=O(n)$ and again the claim follows immediately,

$\bullet $ if $ \ell/n \to 1$, the $ \ell $-AP contained in $ [n] $ is clearly an interval, hence the number of such choices is $ n - \ell + 1 $, completing the proof.
\end{proof}

\subsection{Loose pairs and overlap pairs}  

Next, we consider pairs of APs of potentially different lengths, and distinguish them by the size of their intersection.
\begin{definition}\label{def:LooseOverlapPairs} Let $3\le \ellTwo=\ellTwo(n)\le\ellOne=\ellOne(n)\le n$. 
\begin{enumerate}[(a)]
\item For any $r\in [\ellTwo]$, we define 
$$
\setMixedPartitionPairs{\ellOne}{\ellTwo}{r}:=\left\{(T,T')\in\setAP{\ellOne}{}\times \setAP{\ellTwo}{}\colon |T\cap T'|= r \right\}
$$
to be the set of (ordered) pairs of APs intersecting in precisely $r$ elements. 
\item We say that a pair $(T,T')\in \setAP{\ellOne}{}\times \setAP{\ellTwo}{}$ is  a \emph{loose pair} if $|T\cap T'|=1$. We use the shorthand $\setMixedLoosePairs{\ellOne}{\ellTwo}:=\setMixedPartitionPairs{\ellOne}{\ellTwo}{1}$ for the set of all loose pairs.
\item We say that a pair $(T,T')\in \setAP{\ellOne}{}\times \setAP{\ellTwo}{}$ is  an \emph{overlap pair} if $|T\cap T'|=\ellTwo$, or equivalently $T'\subseteq T$. We use the shorthand $\setMixedOverlapPairs{\ellOne}{\ellTwo}:=\setMixedPartitionPairs{\ellOne}{\ellTwo}{\ellTwo}$ for the set of all overlap pairs.
\item We denote the cardinalities of these sets by $ \numberMixedPartitionPairs{\ellOne}{\ellTwo}{\cdot} := |\setMixedPartitionPairs{\ellOne}{\ellTwo}{\cdot}| $, $ \numberMixedLoosePairs{\ellOne}{\ellTwo} := |\setMixedLoosePairs{\ellOne}{\ellTwo}| $, and $ \numberMixedOverlapPairs{\ellOne}{\ellTwo} := |\setMixedOverlapPairs{\ellOne}{\ellTwo}| $, respectively. Furthermore, whenever $\ellOne=\ellTwo$ we drop one of the lower indices, e.g.\ we use $\numberPartitionPairs{\ellOne}{2} := \numberMixedPartitionPairs{\ellOne}{\ellOne}{2}$.
\end{enumerate}
\end{definition}

Computing the asymptotic behaviour of the number of overlap pairs is a Corollary of Claim~\ref{obs:numberAP}.
\begin{corollary}\label{obs:numberOverlapPair}
For all $3\le \ellTwo=\ellTwo(n)\le \ellOne=\ellOne(n)=o(n)$ we have
$$
\numberMixedOverlapPairs{\ellOne}{\ellTwo} =\Theta(1)\cdot n^2(\ellOne-\ellTwo+1)/\ellTwo.
$$
\end{corollary}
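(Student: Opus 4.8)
The plan is to reduce the count of overlap pairs to two applications of Claim~\ref{obs:numberAP}, exploiting that an $\ellOne$-AP, regarded as a set, is order-isomorphic to a block of $\ellOne$ consecutive integers via a map preserving arithmetic progressions.

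First I would fix an $\ellOne$-AP $T=(a,a+\comDiffOne,\dots,a+(\ellOne-1)\comDiffOne)\in\setAP{\ellOne}{}$ and count the $\ellTwo$-APs $T'$ with $T'\subseteq T$. Writing the elements of such a $T'$ in increasing order as $a+i_1\comDiffOne,\dots,a+i_{\ellTwo}\comDiffOne$ with $0\le i_1<\dots<i_{\ellTwo}\le\ellOne-1$, the successive differences of $T'$ are $(i_{j+1}-i_j)\comDiffOne$, so $T'$ is an arithmetic progression if and only if $(i_1,\dots,i_{\ellTwo})$ is an increasing $\ellTwo$-AP contained in $\{0,1,\dots,\ellOne-1\}$. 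In particular the number of $\ellTwo$-APs contained in $T$ is independent of $T$ and equals the number of $\ellTwo$-APs in a block of $\ellOne$ consecutive integers. Summing over the $\numberAP{\ellOne}$ choices of $T\subseteq[n]$ yields the exact identity
\[\numberMixedOverlapPairs{\ellOne}{\ellTwo}=\numberAP{\ellOne}\cdot\bigl(\text{number of }\ellTwo\text{-APs in }[\ellOne]\bigr).\]

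It then remains to insert the asymptotics of Claim~\ref{obs:numberAP}. As $\ellOne=o(n)$, we have $n-\ellOne+1=\Theta(n)$ and hence $\numberAP{\ellOne}=\Theta(n^2/\ellOne)$. Applying Claim~\ref{obs:numberAP} once more, with $[\ellOne]$ playing the role of the ground set and $\ellTwo$ that of $\ell$ (admissible since $3\le\ellTwo\le\ellOne$), the number of $\ellTwo$-APs in $[\ellOne]$ is $\Theta\!\bigl(\ellOne(\ellOne-\ellTwo+1)/\ellTwo\bigr)$. Multiplying the two estimates gives
\[\numberMixedOverlapPairs{\ellOne}{\ellTwo}=\Theta\!\left(\frac{n^2}{\ellOne}\right)\cdot\Theta\!\left(\frac{\ellOne(\ellOne-\ellTwo+1)}{\ellTwo}\right)=\Theta\!\left(\frac{n^2(\ellOne-\ellTwo+1)}{\ellTwo}\right),\]
which is the desired statement.

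I do not expect a real obstacle here; two small points warrant attention. One is the assertion, used in the reduction, that every arithmetic progression inside $T$ has common difference a multiple of $\comDiffOne$ (so that it pulls back to an arithmetic progression in a block of $\ellOne$ consecutive integers) --- this is immediate, since any two terms of such a progression differ by an integer multiple of $\comDiffOne$. The other is that the implied constants in Claim~\ref{obs:numberAP} are absolute, so that multiplying two $\Theta$-estimates remains valid even when $\ellOne$ and $\ellTwo$ grow with $n$.
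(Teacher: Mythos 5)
Your proposal is correct and follows the same route as the paper: factor $\numberMixedOverlapPairs{\ellOne}{\ellTwo}$ as $\numberAP{\ellOne}$ times the number of $\ellTwo$-APs in $[\ellOne]$ (via the order-isomorphism of an $\ellOne$-AP with a block of $\ellOne$ consecutive integers) and apply Claim~\ref{obs:numberAP} to both factors. Your explicit justification of the factorisation and of the uniformity of the implied constants only spells out what the paper's terser proof leaves implicit.
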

\begin{proof}
Note that the number of overlap pairs $ (T_1, T_2)\in \setMixedOverlapPairs{\ellOne}{\ellTwo} $ is equal to
$\numberAP{\ellOne}\cdot M$, where $M$ is the number of $\ellTwo$-APs in $[\ellOne]$. Indeed, by Claim~\ref{obs:numberAP}, we have $M=\Theta(\ellOne(\ellOne-\ellTwo+1)/\ellTwo)$ and $\numberAP{\ellOne}=\Theta(n^2\ellOne^{-1})$
and the statement follows.
\end{proof}

Similarly, we obtain an upper bound on the number of pairs intersecting in precisely $r$ elements for $2\le r\le \ellTwo-1$. Despite being somewhat crude, this bound will suffice for our purposes.
\begin{claim}\label{claim:numberOtherPairs}
For any $3\le \ellTwo=\ellTwo(n)\le \ellOne=\ellOne(n)= o(n)$ and $2\le r\le \ellTwo-1$ we have 
$$
\numberMixedPartitionPairs{\ellOne}{\ellTwo}{r}=O(n^2\ellOne(\ellTwo)^2).
$$
Furthermore, in case $r\ge \lfloor 2\ellTwo/3\rfloor+1$, we  have 
$$
\numberMixedPartitionPairs{\ellOne}{\ellTwo}{r}=O(n^2(\ellOne-r+1)(\ellTwo-r+1)/\ellTwo).
$$
\end{claim}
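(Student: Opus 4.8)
The plan is to prove both estimates by first choosing the longer progression $T\in\setAP{\ellOne}{}$ — of which there are only $\numberAP{\ellOne}=O(n^2/\ellOne)$ by Claim~\ref{obs:numberAP} — and then, for each fixed $T$, counting the progressions $T'\in\setAP{\ellTwo}{}$ with $|T\cap T'|=r$ by analysing the possible intersections $S:=T\cap T'$. The structural input is the classical fact that the intersection of two arithmetic progressions is again an arithmetic progression: if $T,T'$ have common differences $b,b'$, then by the Chinese Remainder Theorem the condition $x\in T\cap T'$ is equivalent to a single congruence $x\equiv x_0\pmod{\mathrm{lcm}(b,b')}$ together with a range condition cutting out an interval, so $T\cap T'$ is an AP with common difference a common multiple of $b$ and $b'$. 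Hence when $r\ge2$ the set $S$ is an $r$-AP, and writing $T=\{t_0,t_0+b,\dots,t_0+(\ellOne-1)b\}$ its elements occupy positions $j,j+k,\dots,j+(r-1)k$ with integers $k\ge1$ and $j+(r-1)k\le\ellOne-1$; symmetrically inside $T'$ with some $k'\ge1$ and $(r-1)k'\le\ellTwo-1$.

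For the first bound I would be maximally crude. For fixed $T$, every admissible $S$ is one of at most $\ellOne\cdot\ellOne$ sub-$r$-APs of $T$ (choose the pair $(k,j)$ above), and each fixed $r$-AP $S$ is contained in at most $\ellTwo\cdot\ellTwo$ progressions $T'\in\setAP{\ellTwo}{}$ (choose $(k',j')$; here $r\ge2$ forces $k'\le(\ellTwo-1)/(r-1)\le\ellTwo-1$). Since $\{T'\colon|T\cap T'|=r\}\subseteq\bigcup_{S}\{T'\colon S\subseteq T'\}$ (union over $r$-APs $S\subseteq T$), this gives $\#\{T'\colon|T\cap T'|=r\}\le\ellOne^2\ellTwo^2$, and summing over $T$ yields $\numberMixedPartitionPairs{\ellOne}{\ellTwo}{r}\le\numberAP{\ellOne}\,\ellOne^2\ellTwo^2=O(n^2\ellOne\ellTwo^2)$. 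The over-count of pairs with $|T\cap T'|>r$ is harmless, as only an upper bound for each fixed $r$ is needed.

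For the second bound the key observation is that once $r\ge\lfloor2\ellTwo/3\rfloor+1$, the intersection $S$ must consist of $r$ \emph{consecutive} terms of $T'$, i.e.\ $k'=1$. Indeed $k'\ge2$ would force $2(r-1)\le(r-1)k'\le\ellTwo-1$, hence $r\le(\ellTwo+1)/2$, contradicting $r>2\ellTwo/3\ge(\ellTwo+1)/2$ (the last inequality holds for all $\ellTwo\ge3$). So $T'$ is obtained from $S$ by appending $p_-\ge0$ terms before and $p_+\ge0$ terms after with $p_-+p_+=\ellTwo-r$, giving at most $\ellTwo-r+1$ choices of $T'$ per admissible $S$. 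For fixed $T$ the admissible blocks $S$ correspond to pairs $(k,j)$ with $k\ge1$, $0\le j\le\ellOne-1-(r-1)k$, so their number is at most $\sum_{k\ge1,\,(r-1)k\le\ellOne-1}(\ellOne-(r-1)k)\le\frac{\ellOne}{r-1}(\ellOne-r+1)\le\frac{3\ellOne}{\ellTwo}(\ellOne-r+1)$, using $r-1\ge\lfloor2\ellTwo/3\rfloor\ge\ellTwo/3$. Combining,
$$
\numberMixedPartitionPairs{\ellOne}{\ellTwo}{r}\le\numberAP{\ellOne}\cdot\frac{3\ellOne}{\ellTwo}(\ellOne-r+1)\cdot(\ellTwo-r+1)=O\!\left(\frac{n^2(\ellOne-r+1)(\ellTwo-r+1)}{\ellTwo}\right).
$$

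I expect the only substantive step to be the ``consecutive block'' dichotomy opening the second bound — pinning down the threshold $\lfloor2\ellTwo/3\rfloor+1$ and keeping track of the short-progression edge cases; the remainder is routine bookkeeping with the $(k,j)$-parametrisation of a sub-AP inside a longer AP together with Claim~\ref{obs:numberAP}.
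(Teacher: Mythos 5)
Your proof is correct and takes essentially the same approach as the paper's: the first bound comes from fixing $T$ and then the at most $\ellOne^2\cdot(\ellTwo)^2$ placements of the intersection within $T$ and within $T'$, and the second bound hinges on the same key fact that for $r\ge\lfloor 2\ellTwo/3\rfloor+1$ the intersection must occupy consecutive positions in $T'$ (the paper derives this from $|T'\setminus T|<r-1$ rather than from your positional constraint $(r-1)k'\le\ellTwo-1$, and invokes Corollary~\ref{obs:numberOverlapPair} for the count of pairs $(T,T\cap T')$ where you count them directly). These differences are purely cosmetic.
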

\begin{proof}
Note that a pair $(T,T')\in\setPartitionPairs{\ellOne,\ellTwo}{r}$ is already uniquely determined by choosing the first AP $T$, for which there are at most $O(n^2\ellOne^{-1})$ many choices by Claim~\ref{obs:numberAP}; and then fixing the relative position of the first two intersection elements within $T$ and $T'$, for which there are at most $\ellOne^2$ and $(\ellTwo)^2$ many choices, respectively. The first claim follows by multiplying.

As for the second bound, assume that $r\ge 2\ellTwo/3$, then any pair $(T,T')\in\setPartitionPairs{\ellOne,\ellTwo}{r}$ induces an overlap pair consisting of the $\ellOne$-AP $T$ and the $r$-AP $T\cap T'$. By definition the number of such pairs is $\numberMixedOverlapPairs{\ellOne}{r}$ and thus at most $O(n^2(\ellOne - r +1)/\ellTwo)$, by Corollary~\ref{obs:numberOverlapPair}. Next, observe that once $T$ and $T\cap T'$ are chosen, the common difference of $T'$ needs to be a divisor of the common difference of $T\cap T'$. However, since $r\ge \lfloor 2\ellTwo/3\rfloor+1$ we have $|T'\setminus T|\le \ellTwo-\lfloor 2\ellTwo/3\rfloor-1\le\ellTwo/3<r-1$, implying that both $T\cap T'$ and $T'$ have the same common difference. So we may only choose how many elements of $T'\setminus T$ are smaller than the smallest element of $T\cap T'$, the number of choices is at most $\ellTwo-r+1$. Hence in total we obtain the claimed upper bound.
\end{proof}

By contrast, determining the asymptotics of the number of loose pairs is much more difficult. In the following we will use the convention that $1/0=+\infty$, $\min\{x,+\infty\}=x$, and $\overline{x}:=1-x$ for all $x\in[0,1]$. Moreover, for any $3\le \ell=\ell(n)\le n$ we define a function $\measurePos{\ell}$ by setting
\begin{align}\label{def:measurePos}
\measurePos{\ell}(x) := \frac{1}{\ell-1} \sum_{\pos{} = 1 }^{\ell} \Indens{x\geq(\pos{}-1)/(\ell-1)},
\end{align}
for all $x\in[0,1]$. Furthermore, we define functions $\covarianceFunction{\ell}\colon [0,1]\mapsto[0,1]$ by the following Lebesgue-Stieltjes integral
\begin{align}\label{def:covarianceFunction}
\covarianceFunction{\ell}(x):=\int_0^1 \min\left\{\frac{x}{\overline{\posRescaled{}}},\frac{\overline{x}}{\posRescaled{}}\right\}d\measurePos{\ell}(\posRescaled{}).
\end{align} 
We start by proving two technical properties of these functions
\begin{claim}\label{claim:covarianceFunctionConstant}
For any constant $\ell\ge 3$ the function $\covarianceFunction{\ell}$ is non-negative and has the following properties:
\begin{enumerate}[(a)]
\item Uniformly for all $1/3\le x\le 2/3$, we have
\begin{equation}
\covarianceFunction{\ell}(x)\ge \frac{1}{2(\ell-1)}.
\end{equation} 
\item For all $0\le x\le \frac{1}{2(\ell-1)}$ we have
\begin{equation}
\covarianceFunction{\ell}(x)= \frac{1}{\ell-1}+x\harm{\ell-2},
\end{equation} 
where $\harm{t}:=\sum_{j=1}^{t}1/j$ denotes the $t$-th harmonic number.
\end{enumerate}
\end{claim}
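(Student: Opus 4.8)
The plan is to work directly with the explicit finite sum obtained by unfolding the Lebesgue--Stieltjes integral against $\measurePos{\ell}$. Since $\measurePos{\ell}$ is, by~\eqref{def:measurePos}, a step function with jumps of size $1/(\ell-1)$ at the points $\posRescaled{}=(\pos{}-1)/(\ell-1)$ for $\pos{}=1,\dots,\ell$, the integral in~\eqref{def:covarianceFunction} becomes
\begin{equation*}
\covarianceFunction{\ell}(x)=\frac{1}{\ell-1}\sum_{\pos{}=1}^{\ell}\min\left\{\frac{x}{\,\overline{\posRescaled{}_{\pos{}}}\,},\frac{\overline{x}}{\,\posRescaled{}_{\pos{}}\,}\right\},\qquad \posRescaled{}_{\pos{}}:=\frac{\pos{}-1}{\ell-1}.
\end{equation*}
Here the $\pos{}=1$ term is $\min\{x/1,\overline{x}/0\}=\min\{x,+\infty\}=x$ by the stated conventions, and the $\pos{}=\ell$ term is $\min\{x/0,\overline{x}/1\}=\overline{x}$. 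Each summand is a minimum of two non-negative quantities (for $x\in[0,1]$), so non-negativity of $\covarianceFunction{\ell}$ is immediate, and each summand is a concave, piecewise-linear ``tent'' function of $x$ peaking at $x=\posRescaled{}_{\pos{}}$. I would then establish (a) and (b) by elementary estimates on this finite sum.

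For part (b), the hypothesis $0\le x\le \frac{1}{2(\ell-1)}$ is small enough that for each $\pos{}\ge 2$ the minimum is attained by the \emph{first} argument: one checks $\frac{x}{\overline{\posRescaled{}_{\pos{}}}}\le\frac{\overline{x}}{\posRescaled{}_{\pos{}}}$ iff $x\posRescaled{}_{\pos{}}\le\overline{x}\,\overline{\posRescaled{}_{\pos{}}}$ iff $x\le\overline{\posRescaled{}_{\pos{}}}=1-\posRescaled{}_{\pos{}}$, and since $\posRescaled{}_{\pos{}}\le \posRescaled{}_{\ell-1}=\frac{\ell-2}{\ell-1}$ for $\pos{}\le\ell-1$ we have $\overline{\posRescaled{}_{\pos{}}}\ge\frac{1}{\ell-1}\ge x$; for $\pos{}=\ell$ the summand is just $\overline{x}$. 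Hence
\begin{equation*}
\covarianceFunction{\ell}(x)=\frac{1}{\ell-1}\left(x+\sum_{\pos{}=2}^{\ell-1}\frac{x}{\overline{\posRescaled{}_{\pos{}}}}+\overline{x}\right)=\frac{1-x}{\ell-1}+\frac{x}{\ell-1}\left(1+\sum_{\pos{}=2}^{\ell-1}\frac{\ell-1}{\ell-\pos{}}\right),
\end{equation*}
and re-indexing $j=\ell-\pos{}$ turns $\sum_{\pos{}=2}^{\ell-1}\frac{1}{\ell-\pos{}}$ into $\sum_{j=1}^{\ell-2}\frac1j=\harm{\ell-2}$, so $\covarianceFunction{\ell}(x)=\frac{1-x}{\ell-1}+\frac{x}{\ell-1}+x\harm{\ell-2}=\frac{1}{\ell-1}+x\harm{\ell-2}$, as claimed. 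The only subtlety is verifying the direction of the minimum at the two extreme indices, which the conventions $1/0=+\infty$ handle cleanly.

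For part (a), the point is a lower bound valid uniformly on $x\in[1/3,2/3]$. The cleanest route is to throw away all but one well-chosen term: pick the index $\pos{}$ with $\posRescaled{}_{\pos{}}$ closest to $1/2$, i.e.\ such that $\posRescaled{}_{\pos{}}\in[1/3,2/3]$ (such an index exists since consecutive $\posRescaled{}$-values are spaced $\frac{1}{\ell-1}\le\frac12$ apart, and in fact one in $[1/3,2/3]$ always exists for $\ell\ge3$). For that index both $\posRescaled{}_{\pos{}}$ and $\overline{\posRescaled{}_{\pos{}}}$ lie in $[1/3,2/3]$, and for $x\in[1/3,2/3]$ both $x$ and $\overline{x}$ lie in $[1/3,2/3]$, so $\min\{x/\overline{\posRescaled{}_{\pos{}}},\overline{x}/\posRescaled{}_{\pos{}}\}\ge \min\{(1/3)/(2/3),(1/3)/(2/3)\}=1/2$. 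Retaining only this term gives $\covarianceFunction{\ell}(x)\ge\frac{1}{\ell-1}\cdot\frac12=\frac{1}{2(\ell-1)}$. I expect the main (minor) obstacle to be a careful case check that an index with $\posRescaled{}_{\pos{}}\in[1/3,2/3]$ genuinely exists for every $\ell\ge3$ — for small $\ell$ one may just verify it by hand ($\ell=3$: $\posRescaled{}\in\{0,1/2,1\}$; $\ell=4$: $\posRescaled{}\in\{0,1/3,2/3,1\}$), and for $\ell\ge5$ the spacing $\frac{1}{\ell-1}\le\frac14<\frac13$ forces some grid point into the interval $[1/3,2/3]$ of length $1/3$. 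No deep idea is needed; the whole claim is a bookkeeping exercise once the integral is unfolded.
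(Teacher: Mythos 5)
Your proposal is correct and follows essentially the same route as the paper: both unfold the integral against the atomic measure $\measurePos{\ell}$ into a sum over the grid points $(\pos{}-1)/(\ell-1)$, prove (a) by retaining a single term with grid point in $[1/3,2/3]$ and bounding its minimum below by $1/2$, and prove (b) by checking that for $x\le\frac{1}{2(\ell-1)}$ the minimum is attained by the first argument for every $\pos{}\le\ell-1$, which reduces the sum to $\frac{1}{\ell-1}+x\harm{\ell-2}$. The bookkeeping in your part (b) and the existence check of a grid point in $[1/3,2/3]$ are both accurate.
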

\begin{proof}
For the first claim, we note that $\min\left\{\frac{x}{\overline{\posRescaled{}}},\frac{\overline{x}}{\posRescaled{}}\right\} \ge 1/2$ for all $1/3\le \posRescaled{}\le 2/3$ and $1/3\le x\le 2/3 $. We conclude by noting that there is at least one $\pos{}$ in $\{1,2,\ldots, \ell\}$ such that $1/3\le(\pos{}-1)/(\ell-1)\le 2/3$.

For the second claim, let $x\le\frac{1}{2(\ell-1)} $ and note that for all $1\le \pos{}\le \ell-1$ we have $1-\frac{\pos{}-1}{\ell-1}\ge \frac{1}{\ell-1}>x$ implying that 
$$
\min\left\{\frac{x}{1-\frac{\pos{}-1}{\ell-1}},\frac{1-x}{\frac{\pos{}-1}{\ell-1}}\right\}=\frac{x(\ell-1)}{\ell-\pos{}}.
$$
Therefore, we obtain
$$
\covarianceFunction{\ell}(x)=\frac{x+(1-x)}{\ell-1}+x\sum_{\pos{}=2}^{\ell-1}\frac{1}{\ell-\pos{}}=\frac{1}{\ell-1} +x\harm{\ell-2},
$$
as claimed.
\end{proof}

Next, let the entropy function $\covarianceFunction{\infty}\colon [0,1]\mapsto[0,1]$ be defined by 
\begin{align}\label{def:covarianceFunctionInfty}
\covarianceFunction{\infty}(x):=
\begin{cases}
x\log(1/x)+\overline{x}\log(1/\overline{x})& \text{ if } 0<x<1,\\
0&\text{ if } x=0\vee x=1,
\end{cases}
\end{align} 
and observe that $\covarianceFunction{\infty}$ is continuous on $[0,1]$. The next statement shows that $\covarianceFunction{\infty}$ is obtained naturally from $\covarianceFunction{\ell}$ when $\ell=\ell(n)\to+\infty$.
\begin{claim}\label{claim:covarianceFunction}
For any $\ell=\ell(n)\to+\infty$ with $\ell=o(n)$, the function $\covarianceFunction{\ell}$ converges to $\covarianceFunction{\infty}$ in $L^2$ as $n\to+\infty$.
\end{claim}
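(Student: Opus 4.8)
The plan is to deduce the $L^2$ convergence from pointwise convergence together with a uniform bound on all the functions involved, with the pointwise limit identified through an elementary Riemann-sum computation. Fix $x\in(0,1)$ and set $g_x(a):=\min\{x/\overline{a},\,\overline{x}/a\}$ for $a\in[0,1]$, so that $g_x(0)=x$ and $g_x(1)=\overline{x}$ by the conventions of the paper. By~\eqref{def:measurePos} the measure $d\measurePos{\ell}$ is the sum of $\ell$ atoms of mass $\tfrac1{\ell-1}$ at the points $(\iota-1)/(\ell-1)$, $\iota=1,\dots,\ell$, whence $\covarianceFunction{\ell}(x)=\tfrac1{\ell-1}\sum_{\iota=1}^{\ell}g_x\big(\tfrac{\iota-1}{\ell-1}\big)$. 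Two features of $g_x$ are crucial: it is continuous on the \emph{closed} interval $[0,1]$ — the two arguments of the minimum blow up at $a=1$ and $a=0$ respectively, but the minimum itself stays finite and matches the boundary values $x$ and $\overline{x}$ — and $0\le g_x\le 1$, since $g_x(a)\le x/\overline{a}\le 1$ for $a\le\overline{x}$ and $g_x(a)\le\overline{x}/a\le 1$ for $a\ge\overline{x}$.

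Splitting off the last summand gives $\covarianceFunction{\ell}(x)=\tfrac1{\ell-1}\sum_{\iota=1}^{\ell-1}g_x\big(\tfrac{\iota-1}{\ell-1}\big)+\tfrac{g_x(1)}{\ell-1}$; the first term is the left-endpoint Riemann sum of the continuous function $g_x$ over the uniform partition of $[0,1]$ into $\ell-1$ subintervals and hence converges to $\int_0^1 g_x(a)\,da$, while the second term is $O(1/\ell)$. Since $x/\overline{a}\le\overline{x}/a$ precisely when $a\le\overline{x}$, the limiting integral is
$$
\int_0^{\overline{x}}\frac{x}{1-a}\,da+\int_{\overline{x}}^{1}\frac{\overline{x}}{a}\,da
=x\log\tfrac1x+\overline{x}\log\tfrac1{\overline{x}}=\covarianceFunction{\infty}(x),
$$
using $1-\overline{x}=x$. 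Together with the trivial endpoint cases ($\covarianceFunction{\ell}(0)=\covarianceFunction{\infty}(0)=0$ and similarly at $1$), this proves $\covarianceFunction{\ell}(x)\to\covarianceFunction{\infty}(x)$ for every $x\in[0,1]$.

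To upgrade this to $L^2$, I would use the uniform bounds $0\le\covarianceFunction{\ell}\le\ell/(\ell-1)\le\tfrac32$ (from $0\le g_x\le 1$) and $0\le\covarianceFunction{\infty}\le\log 2$ (the binary entropy is maximised at $\tfrac12$); hence $|\covarianceFunction{\ell}-\covarianceFunction{\infty}|^2\le 9$ is a constant function, integrable over the probability space $([0,1],dx)$, so the bounded convergence theorem turns the pointwise convergence into $\int_0^1\big|\covarianceFunction{\ell}(x)-\covarianceFunction{\infty}(x)\big|^2\,dx\to 0$, as claimed. I do not anticipate a genuine obstacle here: the only points requiring care are the continuity of $g_x$ at the endpoints (so that the Riemann sum really converges and the displayed integral is a proper one) and the uniform boundedness. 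One could instead work with the closed form $\covarianceFunction{\ell}(x)=x\big(\harm{\ell-1}-\harm{\ell-\iota^{\ast}-1}\big)+\overline{x}\big(\harm{\ell-1}-\harm{\iota^{\ast}-1}\big)$, where $\iota^{\ast}=\lfloor(\ell-1)\overline{x}\rfloor+1$, together with $\harm{m}=\log m+\gamma+O(1/m)$; but this forces a separate treatment of the regime $x=O(1/\ell)$, where $\harm{\ell-\iota^{\ast}-1}$ degenerates to an empty sum (compare Claim~\ref{claim:covarianceFunctionConstant}(b)), and the boundedness argument above avoids that case distinction entirely.
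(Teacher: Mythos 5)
Your proof is correct and follows essentially the same route as the paper: pointwise convergence of $\covarianceFunction{\ell}$ to $\covarianceFunction{\infty}$ (your explicit Riemann-sum argument is just a hands-on version of the paper's appeal to weak convergence of $d\measurePos{\ell}$ to the uniform measure), the identical computation of $\int_0^1\min\{x/\overline{\posRescaledOne},\overline{x}/\posRescaledOne\}\,d\posRescaledOne$, and a dominated/bounded convergence step to upgrade to $L^2$. No gaps.
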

\begin{proof}
We first observe that $\{d\measurePos{\ellOne}\}_{\ellOne \in \mathbb{N}}$ converges weakly to the uniform measure on $[0,1]$ as $n\to+\infty$. Furthermore, the function $\posRescaledOne\mapsto \min\left\{\frac{x}{\overline{\posRescaledOne}},\frac{\overline{x}}{\posRescaledOne}\right\}$ is bounded and continuous for all $x\in[0,1]$, and thus we have 
\begin{align*}
\covarianceFunction{\ell}(x)=\int_0^1 \min\left\{\frac{x}{\overline{\posRescaledOne}},\frac{\overline{x}}{\posRescaledOne}\right\}d\measurePos{\ellOne}(\posRescaledOne)&=(1\pm o(1))\int_0^1 \min\left\{\frac{x}{\overline{\posRescaledOne}},\frac{\overline{x}}{\posRescaledOne}\right\}d\posRescaledOne.
\end{align*}
Moreover, for all $x\in(0,1)$ we have
\begin{align*}
\int_0^1 \min\left\{\frac{x}{\overline{\posRescaledOne}},\frac{\overline{x}}{\posRescaledOne}\right\}d\posRescaledOne& =\int_0^1  \left( \frac{x}{\overline{\posRescaledOne} } \Indens{ x \leq \overline{\posRescaledOne} } + \frac{\overline{x}}{\posRescaledOne}  \Indens{ x \geq \overline{\posRescaledOne} } \right) d\posRescaledOne \\
                & = x \int_x^1 \frac{d \overline{\posRescaledOne}}{\overline{\posRescaledOne} } + \overline{x} \int_{\overline{x}}^1 \frac{d\posRescaledOne}{\posRescaledOne} \\
                & = x \log\left( 1/x \right) + \overline{x} \log\left(1/\overline{x} \right),
\end{align*}
and this expression extends continuously for $x\in[0,1]$. In other words, $\covarianceFunction{\ell}$ converges point-wise to $\covarianceFunction{\infty}$. 

However, since uniformly for all $x\in[0,1]$ we have $\covarianceFunction{\ell}(x)^2\le1$, the Dominated Convergence Theorem implies that also $\covarianceFunction{\ell}\to\covarianceFunction{\infty}$ in $L^2$.
\end{proof}

With this preparation we will now determine the number of loose pairs  asymptotically.
\begin{lemma}\label{lem:mix-loose-pair}
 Let $3\le \ellTwo=\ellTwo(n)\le \ellOne=\ellOne(n)= o(n)$.
\begin{enumerate}[(a)]
\item If both $\ellOne$ and $\ellTwo$ are constant, then we have
\begin{align*}
\frac{\numberMixedLoosePairs{\ellOne}{\ellTwo} }{n^3} \tendsto{n}{+\infty}  \int_0^1\covarianceFunction{\ellOne}(t)\covarianceFunction{\ellTwo}(t)dt>0. 
\end{align*}
\item If $\ellOne=\ellOne(n)\to+\infty$, but $\ellTwo$ is a constant, then we have 
\begin{align*}
\frac{\numberMixedLoosePairs{\ellOne}{\ellTwo} }{n^3} \tendsto{n}{+\infty} \int_0^1\covarianceFunction{\infty}(t)\covarianceFunction{\ellTwo}(t)dt>0.
\end{align*}
\item If  $\ellTwo=\ellTwo(n)\to+\infty$, then we obtain
\begin{align*}
\frac{\numberMixedLoosePairs{\ellOne}{\ellTwo} }{n^3} \tendsto{n}{+\infty} \int_0^1\covarianceFunction{\infty}(t)^2dt=\frac{5}{6}-\frac{\pi ^2}{18}=0.2850\ldots \,.
\end{align*}
\end{enumerate}
\end{lemma}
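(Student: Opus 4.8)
The plan is to write the number of loose pairs as an incidence count minus a lower-order correction, to evaluate the incidence count through a Riemann-sum approximation driven by $\covarianceFunction{\ellOne}$ and $\covarianceFunction{\ellTwo}$, and to control the correction using Claim~\ref{claim:numberOtherPairs}. For $m\in[n]$ and $k\in\{\ellOne,\ellTwo\}$, write $\numberAP{k}(m):=|\{T\in\setAP{k}{}\colon m\in T\}|$ for the number of $k$-APs in $[n]$ containing $m$. Summing over common elements, a pair $(T,T')\in\setAP{\ellOne}{}\times\setAP{\ellTwo}{}$ with $|T\cap T'|=r$ is counted exactly $r$ times in $\sum_{m=1}^{n}\numberAP{\ellOne}(m)\numberAP{\ellTwo}(m)$, which gives the exact identity
\begin{equation*}
\numberMixedLoosePairs{\ellOne}{\ellTwo}=\sum_{m=1}^{n}\numberAP{\ellOne}(m)\,\numberAP{\ellTwo}(m)-\sum_{r=2}^{\ellTwo}r\,\numberMixedPartitionPairs{\ellOne}{\ellTwo}{r}.
\end{equation*}

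I would first evaluate the incidence count. An $\ellOne$-AP through $m$ is determined by the position $i\in[\ellOne]$ of $m$ inside it together with its common difference $b\ge1$, which must satisfy $1\le b\le\min\{(m-1)/(i-1),(n-m)/(\ellOne-i)\}$ (with $y/0:=+\infty$); hence $\numberAP{\ellOne}(m)=\sum_{i=1}^{\ellOne}\floor{\min\{(m-1)/(i-1),(n-m)/(\ellOne-i)\}}$. Comparing this term by term with the definitions~\eqref{def:measurePos}--\eqref{def:covarianceFunction} of $\measurePos{\ellOne}$ and $\covarianceFunction{\ellOne}$ --- after relabelling $i\mapsto\ellOne+1-i$, which identifies $\min\{t/(i-1),(1-t)/(\ellOne-i)\}$ with the summand of $\covarianceFunction{\ellOne}(t)$ integrated against $d\measurePos{\ellOne}$ --- yields $\numberAP{\ellOne}(m)=n\,\covarianceFunction{\ellOne}(m/n)+O(\ellOne)$ uniformly in $m$, the error absorbing the floors and the passage from $(m-1)/n$ to $m/n$; the same holds for $\numberAP{\ellTwo}(m)$. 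Since $0\le\covarianceFunction{\ellOne},\covarianceFunction{\ellTwo}\le1$ and $\ellTwo\le\ellOne$, this upgrades to $\numberAP{\ellOne}(m)\numberAP{\ellTwo}(m)=n^2\covarianceFunction{\ellOne}(m/n)\covarianceFunction{\ellTwo}(m/n)+O(n\ellOne)$ uniformly, so
\begin{equation*}
\frac{1}{n^3}\sum_{m=1}^{n}\numberAP{\ellOne}(m)\,\numberAP{\ellTwo}(m)=\frac1n\sum_{m=1}^{n}\covarianceFunction{\ellOne}(m/n)\,\covarianceFunction{\ellTwo}(m/n)+O(\ellOne/n).
\end{equation*}
Each summand of $\covarianceFunction{\ellOne}$ is a minimum of two affine functions of $t$, so $\covarianceFunction{\ellOne}$ is Lipschitz with constant $O(\harm{\ellOne})=O(\log\ellOne)$; hence $\covarianceFunction{\ellOne}\covarianceFunction{\ellTwo}$ has total variation $O(\log\ellOne)$ and the Riemann sum converges to $\int_0^1\covarianceFunction{\ellOne}(t)\covarianceFunction{\ellTwo}(t)\,dt$ with error $O(\log\ellOne/n)=o(1)$, since $\ellOne=o(n)$.

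Next I would discard the correction $\sum_{r\ge2}r\,\numberMixedPartitionPairs{\ellOne}{\ellTwo}{r}$ --- the count of pairs of APs sharing at least two elements --- and then take limits. Here lies the main obstacle: whereas the incidence count is essentially geometric, pairs meeting in two or more points are rigidly tied together by divisibility relations between their common differences, and bounding their number is exactly what the estimates in Claim~\ref{claim:numberOtherPairs} are designed for; summing $r\,\numberMixedPartitionPairs{\ellOne}{\ellTwo}{r}$ over $2\le r\le\ellTwo$ (the first bound of that claim for $r\le\lfloor2\ellTwo/3\rfloor$, the second for larger $r$) shows $\sum_{r\ge2}r\,\numberMixedPartitionPairs{\ellOne}{\ellTwo}{r}=o(n^3)$, hence $\numberMixedLoosePairs{\ellOne}{\ellTwo}/n^3\to\int_0^1\covarianceFunction{\ellOne}(t)\covarianceFunction{\ellTwo}(t)\,dt$. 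This is the asserted limit in case~(a), and it is strictly positive because $\covarianceFunction{\ell}\ge0$ is continuous with $\covarianceFunction{\ell}(1/2)\ge\tfrac1{2(\ell-1)}>0$ (Claim~\ref{claim:covarianceFunctionConstant}). In cases~(b) and~(c), Claim~\ref{claim:covarianceFunction} gives $\covarianceFunction{\ell}\to\covarianceFunction{\infty}$ in $L^2$ for the diverging length(s); since $\|\covarianceFunction{\ellOne}\|_2,\|\covarianceFunction{\ellTwo}\|_2,\|\covarianceFunction{\infty}\|_2\le1$, the Cauchy-Schwarz inequality gives $\int_0^1\covarianceFunction{\ellOne}\covarianceFunction{\ellTwo}\,dt\to\int_0^1\covarianceFunction{\infty}\covarianceFunction{\ellTwo}\,dt$ in case~(b) and $\to\int_0^1\covarianceFunction{\infty}^2\,dt$ in case~(c), both again positive.

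It remains to evaluate $\int_0^1\covarianceFunction{\infty}(t)^2\,dt$ with $\covarianceFunction{\infty}(t)=-t\log t-(1-t)\log(1-t)$. By the symmetry $t\leftrightarrow1-t$ this equals $2\int_0^1t^2\log^2t\,dt+2\int_0^1t(1-t)\log t\log(1-t)\,dt$; the first integral is $\tfrac{2}{27}$ (from $\int_0^1t^a\log^2t\,dt=\tfrac{2}{(a+1)^3}$), and expanding $\log(1-t)=-\sum_{k\ge1}t^k/k$ gives $\int_0^1t^m\log t\log(1-t)\,dt=\sum_{k\ge1}\tfrac1{k(m+k+1)^2}$, which partial fractions evaluate to $1-\tfrac{\pi^2}{12}$ for $m=1$ and $\tfrac{71}{108}-\tfrac{\pi^2}{18}$ for $m=2$; subtracting yields $\int_0^1t(1-t)\log t\log(1-t)\,dt=\tfrac{37}{108}-\tfrac{\pi^2}{36}$, whence $\int_0^1\covarianceFunction{\infty}^2\,dt=\tfrac{4}{27}+\tfrac{37}{54}-\tfrac{\pi^2}{18}=\tfrac56-\tfrac{\pi^2}{18}$, matching the stated value $0.2850\ldots$.
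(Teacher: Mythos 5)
Your proof is correct and takes a genuinely different route from the paper's. The paper enumerates loose pairs directly by the data $(\posOne,\posTwo,\comDiffOne,\comDiffTwo,\interPoint)$, obtains a four-fold sum, passes to a double integral over the rescaled common differences, and only then decouples the two APs via the representation $\min\{x,y\}=\int_0^{+\infty}\Indens{t\le x}\Indens{t\le y}dt$ and Fubini; the product $\covarianceFunction{\ellOne}(t)\covarianceFunction{\ellTwo}(t)$ emerges only at the end of that manipulation. You instead decouple at the outset by summing the incidence count $\sum_m\numberAP{\ellOne}(m)\numberAP{\ellTwo}(m)$ over the common point $m$, and you identify $\numberAP{\ellOne}(m)=n\,\covarianceFunction{\ellOne}(m/n)+O(\ellOne)$ directly from the definition of $\measurePos{\ellOne}$ after the relabelling $i\mapsto\ellOne+1-i$; the limit then falls out of a one-dimensional Riemann sum whose error you control via the $O(\log\ellOne)$ Lipschitz constant of $\covarianceFunction{\ellOne}$. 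This is cleaner and makes the origin of $\covarianceFunction{\ell}$ transparent. The $L^2$/Cauchy--Schwarz passage to $\covarianceFunction{\infty}$ in cases (b) and (c) coincides with the paper's, and your hand evaluation of $\int_0^1\covarianceFunction{\infty}(t)^2dt=\tfrac56-\tfrac{\pi^2}{18}$ checks out and replaces the paper's computer-algebra step.

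One caveat. Your identity $\numberMixedLoosePairs{\ellOne}{\ellTwo}=\sum_m\numberAP{\ellOne}(m)\numberAP{\ellTwo}(m)-\sum_{r\ge2}r\,\numberMixedPartitionPairs{\ellOne}{\ellTwo}{r}$ is exact, but the bound on the correction obtained from Claim~\ref{claim:numberOtherPairs} is $O(n^2\ellOne(\ellTwo)^4)$, which is $o(n^3)$ only under the additional hypothesis $\ellOne(\ellTwo)^4=o(n)$ and not for the full range $\ellOne=o(n)$ in the lemma's statement. Note, however, that the paper's own parametrisation by ``the unique intersection point'' silently counts each pair once per common element, so it incurs exactly the same correction term without acknowledging it; you are in fact the more careful of the two on this point. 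Since the lemma is only ever applied with $\ellOne=o(\log n)$, the restriction is harmless, but it is worth making explicit.
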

\begin{proof}
Let $\comDiffMaxOne:=\left\lfloor\frac{n-1}{\ellOne-1}\right\rfloor$ and $\comDiffMaxTwo:=\left\lfloor\frac{n-1}{\ellTwo-1}\right\rfloor$. We enumerate the elements $(T,T')\in\numberAP{\ellOne}\times \numberAP{\ellTwo}$, with $T=(T(1),\dots,T(\ellOne))$ and $T'=(T'(1),\dots,T'(\ellTwo))$, by fixing the common differences $ (\comDiffOne,\comDiffTwo) \in [\comDiffMaxOne]\times [\comDiffMaxTwo] $, and the unique intersection point $\interPoint\in[n]$ together with its positions $ (\posOne,\posTwo) \in [\ellOne]\times[\ellTwo] $ within $(T,T')$. Then both $\ellOne$-APs are to be contained in $[n]$ if and only if 
\begin{align*}
1\le T(1) \:\:\wedge\:\: 1\le T'(1) \:\:\wedge\:\: T(\ellOne)\le n \:\:\wedge\:\: T'(\ellTwo)\le n.
\end{align*}
Expressing $T(1)$, $T'(1)$, $T(\ellOne)$, and $T'(\ellTwo)$ in terms of $\interPoint$, $\posOne$, $\posTwo$, $\comDiffOne$, and $\comDiffTwo$, this is equivalent to
\begin{align*}
1 + \max\left\{(\posOne-1)\comDiffOne,(\posTwo-1)\comDiffTwo\right\} \leq \interPoint \leq n - \max\left\{(\ellOne - \posOne)\comDiffOne,(\ellTwo - \posTwo)\comDiffTwo\right\}.
\end{align*}

In other words, the number of valid choices for $\interPoint$ is 
\begin{align*}
\left(n-\max\left\{(\posOne-1)\comDiffOne,(\posTwo-1)\comDiffTwo\right\}-\max\left\{(\ellOne-\posOne)\comDiffOne,(\ellTwo-\posTwo)\comDiffTwo\right\}\right)_+,
\end{align*}
with $ x_+ := \max\{x, 0\} = x\Indens{x \geq 0} $, and by summing over all choices for $(\posOne, \posTwo, \comDiffOne, \comDiffTwo) \in [\ellOne]\times[\ellTwo]\times[\comDiffMaxOne]\times[\comDiffMaxTwo] $, we obtain
\begin{align*}
\numberMixedLoosePairs{\ellOne}{\ellTwo} & = \!\!  \sum_{  (\posOne, \posTwo, \comDiffOne, \comDiffTwo)  }  \!\! ( n-\max\left\{(\posOne-1)\comDiffOne,(\posTwo-1)\comDiffTwo\right\}-\max\left\{(\ellOne-\posOne)\comDiffOne,(\ellTwo-\posTwo)\comDiffTwo\right\} )_+ \, .
\end{align*}
It turns out to be convenient to divide this quantity by $n$ to obtain
\begin{align}\label{eq:loosePairsFormula}
\frac{\numberMixedLoosePairs{\ellOne}{\ellTwo}}{n}&=\sum_{  (\posOne, \posTwo, \comDiffOne, \comDiffTwo)  }f\left(\frac{\posOne-1}{\ellOne-1},\frac{\posTwo-1}{\ellTwo-1},\frac{(\ellOne-1)\comDiffOne}{n},\frac{(\ellTwo-1)\comDiffTwo}{n}\right)
\end{align}
where the function $f\colon [0,1]^4\to [0,1]$ is defined by
$$
f(\posRescaledOne,\posRescaledTwo,\comDiffRescaledOne,\comDiffRescaledTwo):=\left(1-\max\{\posRescaledOne\comDiffRescaledOne,\posRescaledTwo\comDiffRescaledTwo\}-\max\{(1-\posRescaledOne)\comDiffRescaledOne,(1-\posRescaledTwo)\comDiffRescaledTwo\}\right)_+ \, .
$$
Now note that we have
\begin{align*}
\comDiffMaxOne=(1\pm O(\ellOne/n))\frac{n}{\ellOne-1}\quad \text{ and }\quad \comDiffMaxTwo=(1\pm O(\ellTwo/n))\frac{n}{\ellTwo-1},
\end{align*}
implying
\begin{align*}
\frac{(\ellOne-1)\comDiffOne}{n}=(1\pm O(\ellOne/n))\frac{\comDiffOne}{\comDiffMaxOne}\quad \text{ and }\quad
\frac{(\ellTwo-1)\comDiffTwo}{n}=(1\pm O(\ellTwo/n))\frac{\comDiffTwo}{\comDiffMaxTwo},
\end{align*}
and thus it is not hard to show that there exists a constant $C>0$ such that for all $1\le \posOne\le \ellOne$ and $1\le \posTwo\le \ellTwo$ we have 
$$
\left|f\left(\frac{\posOne-1}{\ellOne-1},\frac{\posTwo-1}{\ellTwo-1},\frac{(\ellOne-1)\comDiffOne}{n},\frac{(\ellTwo-1)\comDiffTwo}{n}\right)-f\left(\frac{\posOne-1}{\ellOne-1},\frac{\posTwo-1}{\ellTwo-1},\frac{\comDiffOne}{\comDiffMaxOne},\frac{\comDiffTwo}{\comDiffMaxTwo}\right)\right|\le C\cdot\frac{\ellOne}{n}.
$$
Furthermore, let 
$$
\measureComDiff{n}(x,x'):=\frac{1}{\comDiffMaxOne\comDiffMaxTwo}\sum_{(\comDiffOne,\comDiffTwo)}\Indens{x\leq\comDiffOne}\Indens{x'\leq\comDiffTwo}
$$
and observe that $\{d\measureComDiff{n}\}_{n\in\NN}$ converges weakly to the uniform measure on $[0,1]^2$. Since $f$ is bounded and continuous, we therefore have
\begin{align}\label{eq:loosePairsFormula2}
\frac{\numberMixedLoosePairs{\ellOne}{\ellTwo}}{n\comDiffMaxOne\comDiffMaxTwo}=(1\pm o(1))\sum_{(\posOne,\posTwo)}\int_{ [0, 1]^2 } f\left(\frac{\posOne-1}{\ellOne-1},\frac{\posTwo-1}{\ellTwo-1},\comDiffRescaledOne,\comDiffRescaledTwo\right)d\comDiffRescaledOne d\comDiffRescaledTwo.
\end{align}

 The next goal is to deal with the positive part of the function $f$: we note that 
\begin{align*}
(R - Q)_+ =  R - \min\{R, Q\}
\end{align*}
and so,  for any $(\posRescaledOne,\posRescaledTwo,\comDiffRescaledOne,\comDiffRescaledTwo)\in[0,1]^4$, by setting 
\begin{align*}
R:=&\min\{1-\posRescaledOne\comDiffRescaledOne,1-\posRescaledTwo\comDiffRescaledTwo\}, \\
Q:=&\max\{(1-\posRescaledOne)\comDiffRescaledOne,(1-\posRescaledTwo)\comDiffRescaledTwo\},
\end{align*}
 we obtain
\begin{align*}
 f\left(\posRescaledOne,\posRescaledTwo,\comDiffRescaledOne,\comDiffRescaledTwo\right)= & \min\{1-\posRescaledOne\comDiffRescaledOne,1-\posRescaledTwo\comDiffRescaledTwo\}\\
&\hspace{.2cm}-\min\left\{\min\{1-\posRescaledOne\comDiffRescaledOne,1-\posRescaledTwo\comDiffRescaledTwo\},\max\{(1-\posRescaledOne)\comDiffRescaledOne,(1-\posRescaledTwo)\comDiffRescaledTwo\}\right\}.
\end{align*}
Recall the integral representation
$$
\min\{x,y\}=\int_{0}^{+\infty}\Indens{t\le x}\Indens{t\le y}dt,
$$
which is valid for all $(x,y)\in\RR_+^2$. We may express $f$ as 
\begin{align*}
 f\left(\posRescaledOne,\posRescaledTwo,\comDiffRescaledOne,\comDiffRescaledTwo\right)&=\int_{0}^{+\infty}\Indens{t\le \min\{1-\posRescaledOne\comDiffRescaledOne,1-\posRescaledOne\comDiffRescaledOne\}}\left(1-\Indens{t\le \max\{(1-\posRescaledOne)\comDiffRescaledOne,(1-\posRescaledTwo)\comDiffRescaledTwo\}}\right)dt\\
 &=\int_0^1 \Indens{\max\{(1-\posRescaledOne)\comDiffRescaledOne,(1-\posRescaledTwo)\comDiffRescaledTwo\}\le t\le \min\{1-\posRescaledOne\comDiffRescaledOne,1-\posRescaledOne\comDiffRescaledOne\}}dt\\
 &=\int_0^1 \Indens{(1-\posRescaledOne)\comDiffRescaledOne\le t\le 1-\posRescaledOne\comDiffRescaledOne}\Indens{(1-\posRescaledTwo)\comDiffRescaledTwo\le t\le 1-\posRescaledTwo\comDiffRescaledTwo}dt\\
 &=\int_0^1 \Ind{\left\{\comDiffRescaledOne\le \min\left\{t/\overline{\posRescaledOne},\overline{t}/\posRescaledOne\right\}\right\}}\Ind{\left\{\comDiffRescaledTwo\le \min\left\{t/\overline{\posRescaledTwo},\overline{t}/\posRescaledTwo\right\}\right\}}dt,
 \end{align*}
 using the convention that $1/0=+\infty$, $\min\{x,+\infty\}=x$, and $\overline{x}:=1-x$ for all $x\in[0,1]$.
 Consequently, by integrating over $(\comDiffRescaledOne,\comDiffRescaledTwo)\in[0,1]^2$ and using Fubini's Theorem, we obtain
\begin{align*}
 \int_{ [0, 1]^2 } f\left(\posRescaledOne,\posRescaledTwo,\comDiffRescaledOne,\comDiffRescaledTwo\right)d\comDiffRescaledOne d\comDiffRescaledTwo&=\int_0^1 \min\left\{\frac{t}{\overline{\posRescaledOne}},\frac{\overline{t}}{\posRescaledOne}\right\}\min\left\{\frac{t}{\overline{\posRescaledTwo}},\frac{\overline{t}}{\posRescaledTwo}\right\} dt.
\end{align*}
Hence,~\eqref{eq:loosePairsFormula2} simplifies to become 
\begin{align*}
\frac{\numberMixedLoosePairs{\ellOne}{\ellTwo}}{n\comDiffMaxOne\comDiffMaxTwo(\ellOne-1)(\ellTwo-1)}=(1\pm o(1))\int_{ [0, 1]^3 }\min\left\{\frac{t}{\overline{\posRescaledOne}},\frac{\overline{t}}{\posRescaledOne}\right\}\min\left\{\frac{t}{\overline{\posRescaledTwo}},\frac{\overline{t}}{\posRescaledTwo}\right\} d\measurePos{\ellOne}(\posRescaledOne) d\measurePos{\ellTwo}(\posRescaledTwo) dt,
\end{align*}
where $\measurePos{\ellOne}$ and $\measurePos{\ellTwo}$ are the measures defined in~\eqref{def:measurePos}. Now, we observe that 
$$
n\comDiffMaxOne\comDiffMaxTwo(\ellOne-1)(\ellTwo-1)=(1\pm o(1))n^3,
$$
and so 
$$
\frac{\numberMixedLoosePairs{\ellOne}{\ellTwo}}{n^3}\tendsto{n}{+\infty} \int_0^1\covarianceFunction{\ellOne}(t)\covarianceFunction{\ellTwo}(t)dt\stackrel{Cl.\ref{claim:covarianceFunctionConstant}}{\ge} \int_{1/3}^{2/3}\frac{1}{4(\ellOne-1)(\ellTwo-1)}dt=\frac{1}{12(\ellOne-1)(\ellTwo-1)}>0,
$$
completing the proof of Lemma~\ref{lem:mix-loose-pair} when both $\ellOne$ and $\ellTwo$ are constant. 

Assume now that $\ellTwo$ is a constant, but $\ellOne=\ellOne(n)\to+\infty$ with $\ellOne=o(\log n)$. Then by Claim~\ref{claim:covarianceFunction} we have $\covarianceFunction{\ell}\to\covarianceFunction{\infty}$ in $L^2$, furthermore, we have $\|\covarianceFunction{\ellTwo}\|_2\le 1$, hence
$$
\left\vert\int_0^1\covarianceFunction{\ellTwo}(t)(\covarianceFunction{\infty}(t)-\covarianceFunction{\ellOne}(t))dt\right\vert \le \|\covarianceFunction{\ellTwo}\|_2\cdot \|\covarianceFunction{\infty}-\covarianceFunction{\ellOne}\|_2\to 0.
$$
This implies that
$$
\frac{\numberMixedLoosePairs{\ellOne}{\ellTwo}}{n^3}\tendsto{n}{+\infty} \int_0^1\covarianceFunction{\infty}(t)\covarianceFunction{\ellTwo}(t)dt\stackrel{Cl.\ref{claim:covarianceFunctionConstant}}{\ge} \frac{1}{2(\ellTwo-1)}\int_{1/3}^{2/3}\covarianceFunction{\infty}(t)dt\ge\frac{\log (3/2)}{3(\ellTwo-1)}>0,
$$
completing the claim for this case.

Similarly, if $\ellOne=\ellOne(n)\to+\infty$ and $\ellTwo=\ellTwo(n)\to+\infty$ with $\ellTwo\le \ellOne=o(\log n)$, then analogously to the previous case, we obtain  
$$
\frac{\numberMixedLoosePairs{\ellOne}{\ellTwo}}{n^3} \tendsto{n}{+\infty}\int_0^1\covarianceFunction{\infty}(t)^2dt=\frac{5}{6}-\frac{\pi ^2}{18}=0.2850\ldots,
$$
where we evaluated the integral using SageMath~\cite{Sage}. 
\end{proof}

\begin{remark}
The limits $\numberMixedLoosePairsLeadingConstant{\ellOne}{\ellTwo}:=\lim_{n\to+\infty}\numberMixedLoosePairs{\ellOne}{\ellTwo}n^{-3}$ can be computed explicitly based on their integral representation (and the help of SageMath) for specific choices of $\ellOne$ and $\ellTwo$; for instance, along the diagonal $\ellOne=\ellTwo$ we have
\begin{align*}
\numberMixedLoosePairsLeadingConstant{3}{3} = \frac{31}{48}\approx0.6458\: ; \quad \numberMixedLoosePairsLeadingConstant{4}{4} = \frac{130}{243}\approx0.5350\: ; \quad \numberMixedLoosePairsLeadingConstant{5}{5} = \frac{835}{1728}\approx0.4832\: ; \quad\dots
\end{align*} 
and similarly, we obtain $\numberMixedLoosePairsLeadingConstant{4}{3} = \frac{785}{1296}\approx0.6057$,  $\numberMixedLoosePairsLeadingConstant{5}{3} = \frac{335}{576}\approx0.5816$, and also $\numberMixedLoosePairsLeadingConstant{5}{4} = \frac{1339}{2592}\approx0.5166$. Further values are easily computed explicitly, however we do not believe that there exists a closed form expression for $\numberMixedLoosePairsLeadingConstant{\ellOne}{\ellTwo}$ in general.
\end{remark}

\subsection{Second moments}

Given any subset $T\subseteq [n]$, we define
$$
\IndicatorRVAPCentralised{T}:=\Ind{\{T\subseteq [n]_p\}}-p^{|T|}
$$
so $\EE(\IndicatorRVAPCentralised{T})=0$ for all $T\subset [n]$, and for any $3\le \ellOne=\ellOne(n)\le n$ we set 
$$
\RVnumberAPCentralised{\ellOne}{} := \RVnumberAP{\ellOne}{}-\EE(\RVnumberAP{\ellOne}{}) = \sum_{T\in\setAP{\ellOne}{}}\IndicatorRVAPCentralised{T}.
$$
First, we prove that the main contribution of the centred second moments comes from loose pairs, overlap pairs, or a combination of both.
\begin{lemma}\label{lem:covariance}
For $0<p=p(n)=o(1)$ and any $3\le \ellTwo=\ellTwo(n)\le  \ellOne=\ellOne(n)=o(n)$ we have 
$$
\EE(\RVnumberAPCentralised{\ellOne}{}\RVnumberAPCentralised{\ellTwo}{}) =(1\pm o(1))\left[\numberMixedLoosePairs{\ellOne}{\ellTwo} p^{\ellOne+\ellTwo-1}+\numberMixedOverlapPairs{\ellOne}{\ellTwo} p^{\ellOne}\right].
$$
In particular, we have 
$$
\sigma_{\ellOne}=(1\pm o(1))\sqrt{\numberLoosePairs{\ellOne} p^{2\ellOne-1}+\numberOverlapPairs{\ellOne}  p^{\ellOne}}.
$$
\end{lemma}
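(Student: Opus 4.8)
The plan is to expand the centred covariance as a double sum over pairs of progressions and to group the terms by the size of the intersection $|T\cap T'|$. Recalling $\IndicatorRVAPCentralised{T}=\Ind{\{T\subseteq[n]_p\}}-p^{|T|}$, a direct expansion of the product gives, for any $T,T'\subseteq[n]$,
$$
\EE(\IndicatorRVAPCentralised{T}\IndicatorRVAPCentralised{T'})=p^{|T\cup T'|}-p^{|T|+|T'|},
$$
which vanishes when $T\cap T'=\emptyset$ (consistently with independence) and equals $p^{\ellOne+\ellTwo-r}(1-p^{r})$ when $(T,T')\in\setAP{\ellOne}{}\times\setAP{\ellTwo}{}$ with $|T\cap T'|=r\ge 1$. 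Summing over ordered pairs and collecting those with a common intersection size yields the exact identity
$$
\EE(\RVnumberAPCentralised{\ellOne}{}\RVnumberAPCentralised{\ellTwo}{})=\sum_{r=1}^{\ellTwo}\numberMixedPartitionPairs{\ellOne}{\ellTwo}{r}\,p^{\ellOne+\ellTwo-r}(1-p^{r}).
$$
Since $0<p=o(1)$, we have $1-p^{r}=1\pm o(1)$ uniformly in $r\ge 1$, so this factor may be pulled out into a global $(1\pm o(1))$ and it remains to estimate $\sum_{r=1}^{\ellTwo}\numberMixedPartitionPairs{\ellOne}{\ellTwo}{r}\,p^{\ellOne+\ellTwo-r}$. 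Its $r=1$ term is $\numberMixedLoosePairs{\ellOne}{\ellTwo}\,p^{\ellOne+\ellTwo-1}$ (loose pairs) and its $r=\ellTwo$ term is $\numberMixedOverlapPairs{\ellOne}{\ellTwo}\,p^{\ellOne}$ (overlap pairs), which are precisely the two claimed main contributions.

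The core of the proof is therefore to show that the intermediate terms are negligible, i.e.
$$
\sum_{r=2}^{\ellTwo-1}\numberMixedPartitionPairs{\ellOne}{\ellTwo}{r}\,p^{\ellOne+\ellTwo-r}=o\!\left(\numberMixedLoosePairs{\ellOne}{\ellTwo}\,p^{\ellOne+\ellTwo-1}+\numberMixedOverlapPairs{\ellOne}{\ellTwo}\,p^{\ellOne}\right).
$$
I would bound the left-hand side against the overlap term alone, using $\numberMixedOverlapPairs{\ellOne}{\ellTwo}=\Theta\!\left(n^{2}(\ellOne-\ellTwo+1)/\ellTwo\right)=\Omega(n^{2}/\ellTwo)$ from Corollary~\ref{obs:numberOverlapPair}, and split the sum at $r_{0}:=\lfloor 2\ellTwo/3\rfloor$. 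For $r_{0}<r\le\ellTwo-1$ I use the refined bound $\numberMixedPartitionPairs{\ellOne}{\ellTwo}{r}=O\!\left(n^{2}(\ellOne-r+1)(\ellTwo-r+1)/\ellTwo\right)$ of Claim~\ref{claim:numberOtherPairs}; substituting $j:=\ellTwo-r$ and summing the convergent series $\sum_{j\ge 1}(\ellOne-\ellTwo+j+1)(j+1)p^{j}=O\!\left(p(\ellOne-\ellTwo+1)\right)$ shows this range contributes $O(p)\cdot\numberMixedOverlapPairs{\ellOne}{\ellTwo}\,p^{\ellOne}$. For $2\le r\le r_{0}$ I use the crude bound $\numberMixedPartitionPairs{\ellOne}{\ellTwo}{r}=O(n^{2}\ellOne\ellTwo^{2})$; the factors $p^{\ellTwo-r}$ form a geometric progression with smallest term $p^{\ellTwo-r_{0}}\ge p^{\lceil\ellTwo/3\rceil}$, so this range contributes $O\!\left(n^{2}\ellOne\ellTwo^{2}\,p^{\ellOne+\lceil\ellTwo/3\rceil}\right)$, and dividing by $\numberMixedOverlapPairs{\ellOne}{\ellTwo}\,p^{\ellOne}$ leaves the ratio $O\!\left(\ellOne\ellTwo^{3}\,p^{\lceil\ellTwo/3\rceil}/(\ellOne-\ellTwo+1)\right)$.

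The main obstacle is to verify that this last ratio tends to $0$ for every admissible choice of the sequences. When $\ellTwo$ is bounded this is immediate, since $p^{\lceil\ellTwo/3\rceil}\to 0$ while the prefactor is $O(1)$. When $\ellTwo=\ellTwo(n)\to+\infty$, the decisive point is that $p=o(1)$ forces $\log(1/p)\to+\infty$, so $p^{\lceil\ellTwo/3\rceil}=\exp(-\lceil\ellTwo/3\rceil\log(1/p))$ decays faster than any fixed power of $\ellTwo$, which overwhelms the polynomial prefactor $\ellOne\ellTwo^{3}/(\ellOne-\ellTwo+1)\le\ellTwo^{4}$; this is precisely where sparsity enters. (In the residual range of very long progressions, where $n^{2}\ellOne\ellTwo^{2}$ exceeds the trivial bound $\numberAP{\ellOne}\numberAP{\ellTwo}=O(n^{4}/(\ellOne\ellTwo))$, one first replaces the crude estimate by $\numberMixedPartitionPairs{\ellOne}{\ellTwo}{r}\le\numberAP{\ellOne}\numberAP{\ellTwo}$ and runs the same argument, with $p^{\Theta(\ellTwo)}$ now beating a fixed power of $n$.) Combining the two ranges establishes the intermediate estimate, hence the first formula of the lemma. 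Finally, the statement about $\sigma_{\ellOne}$ is the special case $\ellTwo=\ellOne$: here $\EE((\RVnumberAPCentralised{\ellOne}{})^{2})=\VV(\RVnumberAP{\ellOne}{})=\sigma_{\ellOne}^{2}$, the two main terms become $\numberLoosePairs{\ellOne}\,p^{2\ellOne-1}$ and $\numberOverlapPairs{\ellOne}\,p^{\ellOne}$ (dropping the repeated index), and since the bracket is a positive quantity (as $\numberOverlapPairs{\ellOne}>0$) we may take the square root and keep the $(1\pm o(1))$ prefactor.
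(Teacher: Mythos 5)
Your proposal is correct and follows essentially the same route as the paper: expand the covariance over ordered pairs grouped by intersection size $r$, keep the $r=1$ and $r=\ellTwo$ terms, and kill the intermediate range by splitting at $\lfloor 2\ellTwo/3\rfloor$ and applying the two bounds of Claim~\ref{claim:numberOtherPairs} against the overlap contribution $\numberMixedOverlapPairs{\ellOne}{\ellTwo}p^{\ellOne}$. The only deviations are cosmetic (a geometric-sum bound in place of the paper's term-count bound for $2\le r\le\lfloor 2\ellTwo/3\rfloor$, and an unnecessary but harmless caveat about very long progressions).
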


\begin{proof}
We observe that for any $r\in [\ellTwo]$ and $(T,T')\in\setAP{\ellOne}{}\times\setAP{\ellTwo}{}$ with $|T\cap T'|=r$, we have 
\begin{equation*}
\EE(\IndicatorRVAPCentralised{T}\IndicatorRVAPCentralised{T'})=\EE(\Indens{T\cup T'\subseteq[n]_p}-p^{\ellOne+\ellTwo})=p^{\ellOne+\ellTwo-r}-p^{\ellOne+\ellTwo}=(1\pm o(1))p^{\ellOne+\ellTwo-r}, 
\end{equation*}
while for any $(T,T')\in\setAP{\ellOne}{}\times\setAP{\ellTwo}{}$ with $|T\cap T'|=0$ we have
\begin{equation*}
\EE(\IndicatorRVAPCentralised{T}\IndicatorRVAPCentralised{T'})=\EE(\IndicatorRVAPCentralised{T})\EE(\IndicatorRVAPCentralised{T'})=0.
\end{equation*}
By distinguishing the size of the intersection we obtain
\begin{align*}
\EE(\RVnumberAPCentralised{\ellOne}{}\RVnumberAPCentralised{\ellTwo}{}) &=(1\pm o(1))\sum_{r =1}^{\ellTwo}\numberMixedPartitionPairs{\ellOne}{\ellTwo}{r}p^{\ellOne+\ellTwo-r},
\end{align*}
and recall that by definition $\numberMixedPartitionPairs{\ellOne}{\ellTwo}{1}=\numberMixedLoosePairs{\ellOne}{\ellTwo}$ and $\numberMixedPartitionPairs{\ellOne}{\ellTwo}{\ellTwo}=\numberMixedOverlapPairs{\ellOne}{\ellTwo}$. 

Therefore, we first consider the contribution of summands with $2\le r\le \lfloor 2\ellTwo/3\rfloor$. By the first estimate of Claim~\ref{claim:numberOtherPairs} we have 
$$
\sum_{r =2}^{\lfloor 2\ellTwo/3\rfloor}\numberMixedPartitionPairs{\ellOne}{\ellTwo}{r}p^{\ellOne+\ellTwo-r}=O(1)\cdot \sum_{r =2}^{\lfloor 2\ellTwo/3\rfloor}n^2\ellOne (\ellTwo)^2 p^{\ellOne+\ellTwo-r}=O( n^2 \ellOne (\ellTwo)^3 p^{\ellOne+\ellTwo/3})=o(\numberMixedOverlapPairs{\ellOne}{\ellTwo}p^\ellOne),
$$
where for the last estimate, we recall that $\numberMixedOverlapPairs{\ellOne}{\ellTwo}=\Theta(n^2(\ellOne-\ellTwo+1)(\ellTwo)^{-1})$ by Corollary~\ref{obs:numberOverlapPair}, and observe that $\tfrac{\ellOne(\ellTwo)^4p^{\ellTwo/3}}{\ellOne-\ellTwo+1}=o(1)$ for all constellations of $\ellOne$ and $\ellTwo$, since $p=o(1)$.

Next, we consider the contribution of summands with $\lfloor 2\ellTwo/3\rfloor+1\le r\le \ellTwo-1$. By the second estimate of Claim~\ref{claim:numberOtherPairs} we obtain
\begin{align*}
\sum_{r =\lfloor 2\ellTwo/3\rfloor+1}^{\ellTwo-1}\numberMixedPartitionPairs{\ellOne}{\ellTwo}{r}p^{\ellOne+\ellTwo-r}&=O(1)\cdot\sum_{r =\lfloor 2\ellTwo/3\rfloor+1}^{\ellTwo-1} n^2(\ellOne-r+1)(\ellTwo)^{-1}(\ellTwo-r+1) p^{\ellOne+\ellTwo-r}\\
    &=O(n^2(\ellOne-\ellTwo+1)(\ellTwo)^{-1}p^\ellOne)\cdot \sum_{i=0}^{\lceil\ellTwo/3\rceil-2}(i+2)^2 p^{i+1}\\
    &=o(\numberMixedOverlapPairs{\ellOne}{\ellTwo}p^\ellOne),
\end{align*}
since the last sum is of order $O(p)=o(1)$.

Hence, the main contribution to $\EE(\RVnumberAPCentralised{\ellOne}{}\RVnumberAPCentralised{\ellTwo}{}) $ comes from the summands for $r=1$ and $r=\ellTwo$, i.e.\ we have
$$
\EE(\RVnumberAPCentralised{\ellOne}{}\RVnumberAPCentralised{\ellTwo}{})=(1\pm o(1))\left[\numberMixedLoosePairs{\ellOne}{\ellTwo}p^{\ellOne+\ellTwo-1}+ \numberMixedOverlapPairs{\ellOne}{\ellTwo}p^{\ellOne}\right],
$$
as claimed by the first statement. As for the second statement, we recall that by definition $\numberMixedLoosePairs{\ellOne}{\ellOne}=\numberLoosePairs{\ellOne}$ and $\numberMixedOverlapPairs{\ellOne}{\ellOne}=\numberOverlapPairs{\ellOne}=\numberAP{\ellOne}$.
\end{proof}

For any $3\le \ellTwo=\ellOne(n)<\ellOne=\ellOne(n)\le n$ we define 
\begin{align}\label{def:LimitingCorrelation}
\covarianceLeadingConstant{\ellOne}{\ellTwo} :=\lim_{n\to+\infty}\frac{\EE(\RVnumberAPCentralised{\ellOne}{}\RVnumberAPCentralised{\ellTwo}{})}{\sigma_{\ellOne}\sigma_{\ellTwo}}
\end{align}
and observe that $0\le \covarianceLeadingConstant{\ellOne}{\ellTwo}\le 1$, by the FKG inequality and the Cauchy-Schwarz inequality. The following proof shows implicitly that $\covarianceLeadingConstant{\ellOne}{\ellTwo}$ is well-defined, i.e.\ the limit in~\eqref{def:LimitingCorrelation} exists.
\begin{lemma}\label{lem:kappa}
Let $0<p=p(n)=o(1)$ and $3\le\ellTwo=\ellTwo(n)<\ellOne=\ellOne(n)=o(n)$. 
\begin{enumerate}[(a)]
\item If $np^{\ellOne-1}\ellOne\to 0$, then 
$$
\covarianceLeadingConstant{\ellOne}{\ellTwo}=0;
$$
\item if $np^{\ellOne-1}\ellOne\to c\in\RR_+$, then 
$$
0<\covarianceLeadingConstant{\ellOne}{\ellTwo}< 1;
$$
\item if $np^{\ellOne-1}\ellOne\to+\infty$ and $\ellTwo$ is a constant, then 
$$
0<\covarianceLeadingConstant{\ellOne}{\ellTwo}<1;
$$
\item if $np^{\ellOne-1}\ellOne\to+\infty$ and $\ellTwo=\ellTwo(n)\to+\infty$, then 
$$
\covarianceLeadingConstant{\ellOne}{\ellTwo}=1.
$$
\end{enumerate} 
\end{lemma}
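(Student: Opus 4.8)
The plan is to evaluate $\covarianceLeadingConstant{\ellOne}{\ellTwo}$ directly from the second-moment estimates of Section~\ref{Sec:Preliminaries}. By Lemma~\ref{lem:covariance} (and $\numberOverlapPairs{\ellOne}=\numberAP{\ellOne}$), $\EE(\RVnumberAPCentralised{\ellOne}{}\RVnumberAPCentralised{\ellTwo}{})=(1\pm o(1))\big(\numberMixedLoosePairs{\ellOne}{\ellTwo}p^{\ellOne+\ellTwo-1}+\numberMixedOverlapPairs{\ellOne}{\ellTwo}p^{\ellOne}\big)$ and $\sigma_{\ellOne}^2=(1\pm o(1))\big(\numberLoosePairs{\ellOne}p^{2\ellOne-1}+\numberAP{\ellOne}p^{\ellOne}\big)$ (and analogously for $\ellTwo$), so $\covarianceLeadingConstant{\ellOne}{\ellTwo}$ is the limit of the sum of a \emph{loose-pair part} $\numberMixedLoosePairs{\ellOne}{\ellTwo}p^{\ellOne+\ellTwo-1}/(\sigma_{\ellOne}\sigma_{\ellTwo})$ and an \emph{overlap-pair part} $\numberMixedOverlapPairs{\ellOne}{\ellTwo}p^{\ellOne}/(\sigma_{\ellOne}\sigma_{\ellTwo})$. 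First I would show that the overlap-pair part tends to $0$ unconditionally. Writing $\numberMixedOverlapPairs{\ellOne}{\ellTwo}=\numberAP{\ellOne}\cdot M$ with $M$ the number of $\ellTwo$-APs in $[\ellOne]$, and using $\numberAP{\ell}=\Theta(n^2/\ell)$, $M=\Theta(\ellOne(\ellOne-\ellTwo+1)/\ellTwo)$ (Claim~\ref{obs:numberAP}) and the crude bound $\sigma_{\ell}\ge(1-o(1))\sqrt{\numberAP{\ell}p^{\ell}}$, one gets
\[
\frac{\numberMixedOverlapPairs{\ellOne}{\ellTwo}p^{\ellOne}}{\sigma_{\ellOne}\sigma_{\ellTwo}}\le(1+o(1))\,M\sqrt{\numberAP{\ellOne}/\numberAP{\ellTwo}}\;p^{(\ellOne-\ellTwo)/2}=O\!\left(\sqrt{\ellOne/\ellTwo}\,(\ellOne-\ellTwo+1)\,p^{(\ellOne-\ellTwo)/2}\right);
\]
with $d:=\ellOne-\ellTwo\ge1$ and $\sqrt{\ellOne/\ellTwo}\le\sqrt{1+d}$ this is $O\big(((1+d)^3p^{d})^{1/2}\big)$, which tends to $0$ because $(1+d)^3p^{d}\to0$ whenever $p=o(1)$ (once $p\le\tfrac12$ it is at most $(1+d)^3\,2^{-d}\to0$ if $d\to+\infty$, while $(1+d)^3p^{d}\le(1+d)^3p\to0$ if $d$ is bounded). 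Hence $\covarianceLeadingConstant{\ellOne}{\ellTwo}=\lim_{n\to+\infty}\numberMixedLoosePairs{\ellOne}{\ellTwo}p^{\ellOne+\ellTwo-1}/(\sigma_{\ellOne}\sigma_{\ellTwo})$.

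Secondly, after cancelling the powers of $p$,
\[
\frac{\numberMixedLoosePairs{\ellOne}{\ellTwo}p^{\ellOne+\ellTwo-1}}{\sigma_{\ellOne}\sigma_{\ellTwo}}=(1\pm o(1))\cdot\frac{\numberMixedLoosePairs{\ellOne}{\ellTwo}}{\sqrt{\numberLoosePairs{\ellOne}\numberLoosePairs{\ellTwo}}}\cdot\sqrt{\theta_{\ellOne}\theta_{\ellTwo}},\qquad \theta_{\ell}:=\frac{\numberLoosePairs{\ell}p^{2\ell-1}}{\numberLoosePairs{\ell}p^{2\ell-1}+\numberAP{\ell}p^{\ell}}.
\]
By Lemma~\ref{lem:mix-loose-pair}, $\numberMixedLoosePairs{\ellOne}{\ellTwo}/n^3$, $\numberLoosePairs{\ellOne}/n^3$ and $\numberLoosePairs{\ellTwo}/n^3$ tend respectively to $\int_0^1 g_1g_2$, $\int_0^1 g_1^2$ and $\int_0^1 g_2^2$, where $g_1:=\covarianceFunction{\ellOne}$ if $\ellOne$ is a constant and $g_1:=\covarianceFunction{\infty}$ if $\ellOne\to+\infty$, and $g_2$ is defined from $\ellTwo$ in the same way; all three integrals are positive, so $\numberMixedLoosePairs{\ellOne}{\ellTwo}/\sqrt{\numberLoosePairs{\ellOne}\numberLoosePairs{\ellTwo}}\to\rho:=\frac{\int_0^1 g_1g_2}{\sqrt{\int_0^1 g_1^2}\,\sqrt{\int_0^1 g_2^2}}$, which lies in $(0,1]$ by the Cauchy--Schwarz inequality, with $\rho=1$ exactly when $g_1$ and $g_2$ are proportional a.e. For the other factor, $\theta_{\ell}^{-1}-1=\numberAP{\ell}p^{\ell}\big/(\numberLoosePairs{\ell}p^{2\ell-1})=\Theta(1)\big/(np^{\ell-1}\ell)$, using $\numberAP{\ell}=(1\pm o(1))\tfrac{n^2}{2(\ell-1)}$ (Claim~\ref{obs:numberAP}) and $\numberLoosePairs{\ell}/n^3=\Theta(1)$ uniformly in $\ell$ (the constants $\numberMixedLoosePairsLeadingConstant{\ell}{\ell}=\int_0^1\covarianceFunction{\ell}^2$ form a convergent sequence of positive reals with limit $\int_0^1\covarianceFunction{\infty}^2>0$, by Claim~\ref{claim:covarianceFunction} and Lemma~\ref{lem:mix-loose-pair}); thus, with $\psi_{\ell}:=np^{\ell-1}\ell$ as in~\eqref{eq:psi}, $\theta_{\ell}\to0$, $\theta_{\ell}\to\theta^*\in(0,1)$, or $\theta_{\ell}\to1$ according as $\psi_{\ell}\to0$, $\psi_{\ell}\to c\in(0,\infty)$, or $\psi_{\ell}\to+\infty$. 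In particular, whenever $\theta_{\ellOne}^*:=\lim_n\theta_{\ellOne}$ and $\theta_{\ellTwo}^*:=\lim_n\theta_{\ellTwo}$ exist, $\covarianceLeadingConstant{\ellOne}{\ellTwo}=\rho\sqrt{\theta_{\ellOne}^*\theta_{\ellTwo}^*}$; this will also show that the limit in~\eqref{def:LimitingCorrelation} exists in all regimes considered.

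The case analysis then rests on one last observation: $\psi_{\ellTwo}\to+\infty$ whenever $\psi_{\ellOne}\not\to0$. Indeed $\psi_{\ellTwo}/\psi_{\ellOne}=p^{-(\ellOne-\ellTwo)}\,\ellTwo/\ellOne\ge p^{-d}/(d+1)$ with $d=\ellOne-\ellTwo\ge1$ (using the elementary inequality $\ellTwo(\ellOne-\ellTwo+1)\ge\ellOne$, equivalent to $\ellOne\ge\ellTwo$), and $p^{-d}/(d+1)\to+\infty$ when $p=o(1)$ (distinguish $d$ bounded, where $p^{-d}\ge p^{-1}\to+\infty$, from $d\to+\infty$, where $p^{-d}\ge 2^d$ eventually); hence if $\psi_{\ellOne}\to c\in(0,+\infty]$ then $\psi_{\ellTwo}\to+\infty$, so $\theta_{\ellTwo}^*=1$. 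It now suffices to read off each case. In case~(a), $\psi_{\ellOne}\to0$ gives $\theta_{\ellOne}\to0$, so $\numberMixedLoosePairs{\ellOne}{\ellTwo}p^{\ellOne+\ellTwo-1}/(\sigma_{\ellOne}\sigma_{\ellTwo})\le(1+o(1))\sqrt{\theta_{\ellOne}}\to0$, whence $\covarianceLeadingConstant{\ellOne}{\ellTwo}=0$ (no convergence of $\theta_{\ellTwo}$ is needed). In case~(b), $\theta_{\ellOne}^*\in(0,1)$ and $\theta_{\ellTwo}^*=1$, so $\covarianceLeadingConstant{\ellOne}{\ellTwo}=\rho\sqrt{\theta_{\ellOne}^*}\in(0,1)$ since $0<\rho\le1$ and $0<\sqrt{\theta_{\ellOne}^*}<1$. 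In case~(c), $\theta_{\ellOne}^*=\theta_{\ellTwo}^*=1$, so $\covarianceLeadingConstant{\ellOne}{\ellTwo}=\rho$, and $\rho<1$ because $g_2=\covarianceFunction{\ellTwo}$ satisfies $\covarianceFunction{\ellTwo}(0)=\tfrac{1}{\ellTwo-1}>0$ while $g_1$ is not a scalar multiple of it --- if $g_1=\covarianceFunction{\infty}$ this is immediate from $\covarianceFunction{\infty}(0)=0$, and if $g_1=\covarianceFunction{\ellOne}$ with $\ellOne>\ellTwo$ constant then $\covarianceFunction{\ellOne}=\lambda\covarianceFunction{\ellTwo}$ would force both $\lambda=\tfrac{\ellTwo-1}{\ellOne-1}$ (values at $0$) and $(\ellOne-1)\harm{\ellOne-2}=(\ellTwo-1)\harm{\ellTwo-2}$ (right-derivatives at $0$, available from Claim~\ref{claim:covarianceFunctionConstant}(b)), which is impossible because $\ell\mapsto(\ell-1)\harm{\ell-2}$ is strictly increasing on $\{3,4,\dots\}$. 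Finally, in case~(d) one has $\ellOne,\ellTwo\to+\infty$, so $\theta_{\ellOne}^*=\theta_{\ellTwo}^*=1$ and $g_1=g_2=\covarianceFunction{\infty}$, giving $\rho=1$ and $\covarianceLeadingConstant{\ellOne}{\ellTwo}=1$.

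I expect the main obstacle to be the reduction itself: showing that the overlap-pair part of the correlation is negligible \emph{uniformly} over all admissible $\ellOne,\ellTwo$ (which needs good control of $\numberMixedOverlapPairs{\ellOne}{\ellTwo}$ and the elementary estimate $(1+d)^3p^{d}\to0$), and that $\numberLoosePairs{\ell}/n^3$ stays bounded away from $0$ uniformly in $\ell$; together with the rigidity argument of case~(c), where the non-proportionality of $\covarianceFunction{\ellOne}$ (or $\covarianceFunction{\infty}$) and $\covarianceFunction{\ellTwo}$ must be extracted from Claim~\ref{claim:covarianceFunctionConstant}(b).
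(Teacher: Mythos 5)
Your proposal is correct, and it uses the same ingredients as the paper (Lemma~\ref{lem:covariance}, Claim~\ref{obs:numberAP}, Corollary~\ref{obs:numberOverlapPair}, Lemma~\ref{lem:mix-loose-pair}, and Cauchy--Schwarz), but organizes them differently. The paper argues regime by regime, in each case identifying which of $\numberLoosePairs{\ell}p^{2\ell-1}$ and $\numberAP{\ell}p^{\ell}$ dominates and discarding the other; you instead give a single unified decomposition $\covarianceLeadingConstant{\ellOne}{\ellTwo}=\lim\big(\text{overlap part}+\rho_n\sqrt{\theta_{\ellOne}\theta_{\ellTwo}}\big)$, prove once and for all that the mixed overlap part is $O\big(((1+d)^3p^{d})^{1/2}\big)=o(1)$ for $d=\ellOne-\ellTwo\ge 1$, and reduce the whole case analysis to the limits of $\theta_{\ellOne}$, $\theta_{\ellTwo}$ and of the correlation $\rho$ of the loose-pair counts (your $\theta_{\ellOne}^*$ is exactly the paper's $1/(1+\gamma)$ in case~(b), and your observation $\psi_{\ellTwo}\to+\infty$ whenever $\psi_{\ellOne}\not\to0$ is the paper's $\numberAP{\ellTwo}p^{\ellTwo}=o(\numberLoosePairs{\ellTwo}p^{2\ellTwo-1})$). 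This buys a cleaner bookkeeping and an explicit closed form $\covarianceLeadingConstant{\ellOne}{\ellTwo}=\rho\sqrt{\theta_{\ellOne}^*\theta_{\ellTwo}^*}$ valid in all regimes. Moreover, in case~(c) your treatment is actually more complete than the paper's: the paper only verifies linear independence of $\covarianceFunction{\ellTwo}$ and $\covarianceFunction{\infty}$, which covers $\ellOne\to+\infty$ but not the admissible sub-case where both $\ellOne$ and $\ellTwo$ are constants; your rigidity argument via Claim~\ref{claim:covarianceFunctionConstant}(b) (values at $0$ and slopes on $[0,\tfrac{1}{2(\ellOne-1)}]$, plus strict monotonicity of $\ell\mapsto(\ell-1)\harm{\ell-2}$) closes that gap. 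The only point to state a little more carefully is the assertion that $\numberLoosePairs{\ell}/n^3=\Theta(1)$ ``uniformly in $\ell$'': Lemma~\ref{lem:mix-loose-pair} gives this only for sequences $\ell(n)$ that are constant or divergent, which is all you use, but the wording should reflect that.
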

\begin{proof}
By Lemma~\ref{lem:covariance}, we have 
\begin{align*}
\frac{\left[\EE (\RVnumberAPCentralised{\ellOne}{}\RVnumberAPCentralised{\ellTwo}{})\right]^2}{\sigma_{\ellOne}^2\sigma_{\ellTwo}^2}&=(1\pm o(1))\frac{\left[\numberMixedLoosePairs{\ellOne}{\ellTwo}p^{\ellOne+\ellTwo-1}+ \numberMixedOverlapPairs{\ellOne}{\ellTwo}p^{\ellOne}\right]^2}{\left[\numberLoosePairs{\ellOne}p^{2\ellOne-1}+ \numberAP{\ellOne}p^{\ellOne}\right]\cdot \left[\numberLoosePairs{\ellTwo}p^{2\ellTwo-1}+ \numberAP{\ellTwo}p^{\ellTwo}\right]}.
\end{align*}

First assume that $np^{\ellOne-1}\ellOne\to 0$, then we have
$$
\numberLoosePairs{\ellOne}p^{2\ellOne-1}=\Theta(n^3p^{2\ellOne-1})= o(n^2\ellOne^{-1}p^{\ellOne}) =o(\numberAP{\ellOne}p^{\ellOne}),
$$
by Claim~\ref{obs:numberAP} and Lemma~\ref{lem:mix-loose-pair}. Consequently we have 
$$
\frac{\left[\EE (\RVnumberAPCentralised{\ellOne}{}\RVnumberAPCentralised{\ellTwo}{})\right]^2}{\sigma_{\ellOne}^2\sigma_{\ellTwo}^2}=O(1)\cdot\left[\frac{\left(\numberMixedLoosePairs{\ellOne}{\ellTwo}p^{\ellOne+\ellTwo-1}\right)^2}{\numberAP{\ellOne}p^{\ellOne}\cdot\numberLoosePairs{\ellTwo}p^{2\ellTwo-1}}+\frac{\left(\numberMixedOverlapPairs{\ellOne}{\ellTwo}p^{\ellOne}\right)^2}{\numberAP{\ellOne}p^{\ellOne}\cdot \numberAP{\ellTwo}p^{\ellTwo}}\right]. 
$$
Furthermore, using Claim~\ref{obs:numberAP} and Lemma~\ref{lem:mix-loose-pair} we obtain
$$
\frac{\left(\numberMixedLoosePairs{\ellOne}{\ellTwo}p^{\ellOne+\ellTwo-1}\right)^2}{\numberAP{\ellOne}p^{\ellOne}\cdot\numberLoosePairs{\ellTwo}p^{2\ellTwo-1}}=O(1)\cdot np^{\ellOne-1}\ellOne=o(1),
$$
and similarly, from Claim~\ref{obs:numberAP} and Corollary~\ref{obs:numberOverlapPair} we deduce
$$
\frac{\left(\numberMixedOverlapPairs{\ellOne}{\ellTwo}p^{\ellOne}\right)^2}{\numberAP{\ellOne}p^{\ellOne}\cdot \numberAP{\ellTwo}p^{\ellTwo}}=O(1)\cdot \frac{p^{\ellOne-\ellTwo}\ellOne(\ellOne-\ellTwo+1)^2}{\ellTwo}=o(1).
$$
Hence, letting $n\to+\infty$ we obtain 
$$
\covarianceLeadingConstant{\ellOne}{\ellTwo}=\lim_{n\to+\infty}\frac{\EE(\RVnumberAPCentralised{\ellOne}{}\RVnumberAPCentralised{\ellTwo}{})}{\sigma_{\ellOne}\sigma_{\ellTwo}}\le 0,
$$
as claimed since we already argued that $\covarianceLeadingConstant{\ellOne}{\ellTwo}\ge 0$ by the FKG inequality.

 On the other hand, if $np^{\ellOne-1}\ellOne\to c\in\RR_+$, then Claim~\ref{obs:numberAP}, Corollary~\ref{obs:numberOverlapPair}, and Lemma~\ref{lem:mix-loose-pair} imply
 $$
 \numberAP{\ellTwo}p^{\ellTwo}=\Theta(n^2(\ellTwo)^{-1}p^{\ellTwo})=o(n^3p^{2\ellTwo-1})  =o(\numberLoosePairs{\ellTwo}p^{2\ellTwo-1}),
 $$
  and 
  $$
   \numberMixedOverlapPairs{\ellOne}{\ellTwo}p^{\ellOne} =\Theta(n^2(\ellOne-\ellTwo+1)(\ellTwo)^{-1}p^{\ellOne})=o(n^3p^{\ellOne+\ellTwo-1})=o(\numberMixedLoosePairs{\ellOne}{\ellTwo}p^{\ellOne+\ellTwo-1}).
    $$
   Thus we obtain
\begin{align}\label{eq:inbetweenCovariance}
\frac{\left[\EE (\RVnumberAPCentralised{\ellOne}{}\RVnumberAPCentralised{\ellTwo}{})\right]^2}{\sigma_{\ellOne}^2\sigma_{\ellTwo}^2} &=(1\pm o(1))\cdot\frac{\left(\numberMixedLoosePairs{\ellOne}{\ellTwo}p^{\ellOne+\ellTwo-1}\right)^2}{\left[\numberLoosePairs{\ellOne}p^{2\ellOne-1}+\numberAP{\ellOne}p^{\ellOne}\right]\cdot \numberLoosePairs{\ellTwo}p^{2\ellTwo-1}}.
\end{align}

Now let $\varphi_\ellOne:=\covarianceFunction{\ellOne}$ if $\ellOne$ is a constant, and $\varphi_\ellOne:=\covarianceFunction{\infty}$ if $\ellOne=\ellOne(n)\to+\infty$; and define $\varphi_\ellTwo$ analogously. We note that both $\varphi_\ellOne$ and $\varphi_\ellTwo$ are $L^2$-integrable. Next, we take the limit $n\to +\infty$ in~\eqref{eq:inbetweenCovariance} and note that Lemma~\ref{lem:mix-loose-pair} implies
$$
\covarianceLeadingConstant{\ellOne}{\ellTwo}^2=\lim_{n\to+\infty}\frac{\left[\EE (\RVnumberAPCentralised{\ellOne}{}\RVnumberAPCentralised{\ellTwo}{})\right]^2}{\sigma_{\ellOne}^2\sigma_{\ellTwo}^2}=\frac{1}{1+\gamma}\cdot \frac{\langle \varphi_\ellOne,\varphi_\ellTwo\rangle^2}{\|\varphi_\ellOne\|_2^2\|\varphi_\ellTwo\|_2^2}
$$
where
$$
\gamma=\gamma(c,\ellOne):=\lim_{n\to+\infty}\frac{\numberAP{\ellOne}p^{\ellOne}}{\numberLoosePairs{\ellOne}p^{2\ellOne-1}} =
\begin{cases} 
\frac{\ellOne}{2(\ellOne-1)c\|\varphi_\ellOne\|_2^2} \quad \mbox{if $ \ell $ is finite} \\ \frac{1}{2 c\|\varphi_\infty\|_2^2} \quad\quad \ \mbox{ otherwise}
\end{cases}.
$$
In particular, the Cauchy-Schwarz inequality implies 
$$
\covarianceLeadingConstant{\ellOne}{\ellTwo}^2 \le \frac{1}{1+\gamma}<1,
$$
since $\gamma>0$. On the other hand, Lemma~\ref{lem:mix-loose-pair} also guarantees that $\langle \varphi_\ellOne,\varphi_\ellTwo\rangle>0$ and this implies
$$
\covarianceLeadingConstant{\ellOne}{\ellTwo}>0,
$$
completing the proof for the case $np^{\ellOne-1}  \ellOne\to c\in\RR_+$.

Assume now that $np^{\ellOne-1}  \ellOne\to+\infty$, then $\numberAP{\ellTwo}p^{\ellTwo}=o(\numberLoosePairs{\ellTwo}p^{2\ellTwo-1})$, $\numberAP{\ellOne}p^{\ellOne}=o(\numberLoosePairs{\ellOne}p^{2\ellOne-1})$, and $\numberMixedOverlapPairs{\ellOne}{\ellTwo}p^{\ellOne} =\Theta(n^2(\ellOne-\ellTwo+1)(\ellTwo)^{-1}p^{\ellOne})=o(n^3p^{\ellOne+\ellTwo-1})=o(\numberMixedLoosePairs{\ellOne}{\ellTwo}p^{\ellOne+\ellTwo-1})$, by Claim~\ref{obs:numberAP}, Corollary~\ref{obs:numberOverlapPair}, and Lemma~\ref{lem:mix-loose-pair}. Therefore, we obtain
$$
\covarianceLeadingConstant{\ellOne}{\ellTwo}^2=\lim_{n\to+\infty}\frac{\left(\numberMixedLoosePairs{\ellOne}{\ellTwo}\right)^2}{\numberLoosePairs{\ellOne}\cdot \numberLoosePairs{\ellTwo}}=\frac{\langle \varphi_\ellOne,\varphi_\ellTwo\rangle^2}{\|\varphi_\ellOne\|_2^2\|\varphi_\ellTwo\|_2^2}
$$
from Lemmas~\ref{lem:covariance} and~\ref{lem:mix-loose-pair}, using the notation of $\varphi_\ellOne$ and $\varphi_\ellTwo$ as in the previous case. As before, we observe that $\langle \varphi_\ellOne,\varphi_\ellTwo\rangle>0$ and this implies
$$
\covarianceLeadingConstant{\ellOne}{\ellTwo}>0.
$$
It remains to distinguish two cases: first, if $\ellTwo=\ellTwo(n)\to+\infty$, then also $\ellOne=\ellOne(n)\to+\infty$ and thus $\varphi_\ellOne=\varphi_\ellTwo=\covarianceFunction{\infty}$, but then clearly $\langle \covarianceFunction{\infty},\covarianceFunction{\infty}\rangle=\|\covarianceFunction{\infty}\|_2^2$, so $\covarianceLeadingConstant{\ellOne}{\ellTwo}=1$. 

On the other hand, if $\ellTwo$ is a constant, then we observe that $\covarianceFunction{\ellTwo}$ and $\covarianceFunction{\infty}$ are linearly independent in $L^2$. To see this, let $\eps=\eps(\ellTwo)>0$ be a sufficiently small constant, and observe that $\covarianceFunction{\infty}(x)^2\le (2x+ x\log (1/x))^2\le 9x $ for all $x\le \eps$ implying 
$$
\int_0^\eps \covarianceFunction{\infty}(x)^2dx\le \frac{9}{2}\eps^2;
$$
however, by Lemma~\ref{claim:covarianceFunctionConstant}~(b), for sufficiently small $\eps>0$, we have $\covarianceFunction{\ellTwo}(x)\ge 1/(\ellTwo-1)$ and thus
$$
\int_0^\eps \covarianceFunction{\ellTwo}(x)^2dx\ge \frac{1}{(\ellTwo-1)^2}\eps.
$$
Consequently, for any sufficiently small constant $\eps>0$ we obtain 
$$
\int_0^\eps \left(\frac{\covarianceFunction{\ellTwo}(x)}{\|\covarianceFunction{\ellTwo}\|}\right)^2dx\ge \frac{1}{(\ellTwo-1)^2\|\covarianceFunction{\ellTwo}\|^2}\eps> \frac{9}{2\|\covarianceFunction{\infty}\|^2}\eps^2\ge  \int_0^\eps \left(\frac{\covarianceFunction{\infty}(x)}{\|\covarianceFunction{\infty}\|}\right)^2dx,
$$
and so the functions $\covarianceFunction{\ellTwo}$ and $\covarianceFunction{\infty}$ are not linearly dependent in $L^2$, as claimed. Consequently, the Cauchy-Schwarz inequality is a strict inequality and we obtain 
$$
\covarianceLeadingConstant{\ellOne}{\ellTwo}=\frac{\langle \varphi_\ellOne,\varphi_\ellTwo\rangle^2}{\|\varphi_\ellOne\|_2^2\|\varphi_\ellTwo\|_2^2}<1,
$$
completing the proof.
\end{proof}

\section{Univariate fluctuations: proof of Theorem~\ref{thm:mainUnivariate}}\label{Sec:Univariate}

In this section we focus on univariate fluctuations of $\numberAP{\ell}$, i.e.\ we prove the two statements of Theorem~\ref{thm:mainUnivariate}. First we treat the Poisson regime, where the result follows directly from an application of the Chen-Stein method and the preliminary computations performed in Section~\ref{Sec:Preliminaries} (with $\ell'=\ell$). Likewise, the Gaussian approximation is a consequence of a classical normality criterion.

\subsection{Poisson regime: proof of Theorem~\ref{thm:mainUnivariate}\eqref{thm:mainUnivariatePoisson}}\label{Sec:Poisson}
We start by introducing the notion of a dependency graph. We emphasize the fact that this definition is the one that fits our purpose, and that there can be many other such notions (see e.g. \cite{FerayWeighted,JansonLuczakRucinski}). 
\begin{definition}\label{def:depGraph}
Let  $ (Y_i)_{1 \leq i \leq N} $ be a sequence of random variables (on a common probability space). A (simple) graph $G=(V,E)$  with vertex set $V=[N]$ is called a \emph{dependency graph} for $(Y_i)_{i\in[N]}$ if and only if for all disjoint subsets $U,U'\subseteq V$ with $E(U,U')=\emptyset$ we have
$$
(Y_i)_{i\in U} \text{ is independent of } (Y_i)_{i\in U'},
$$
where $E(U,U'):=\{(i,j)\in E\colon i\in U\text{ and } j\in U'\}$ denotes the set of edges between $U$ and $U'$.
We denote the neighbourhood of a vertex $i\in [N]$ by $\cN(i):=\cN_G(i):=\{j\in U\colon (i,j)\in E\}$ and let $\widehat{\cN}(i):=\cN(i)\cup \{i\}$.
\end{definition}
The dependency graph relevant to this paper is the following: given $3\le\ell'=\ell'(n)\le \ell=\ell(n)\le n$ we consider the graph 
\begin{equation}\label{eq:depGraph}
\depGraph{\ell,\ell'}=\depGraph{\ell,\ell'}(n):=\left(\setAP{\ell}{}\cup\setAP{\ell'}{}, \left\{(T,T')\in \left(\setAP{\ell}{}\cup\setAP{\ell'}{}\right)^2\colon |T\cap T'|\ge 1 \right\}\right).
\end{equation}
In other words, the vertices represent APs and edges indicate that the corresponding APs intersect. Clearly, $\depGraph{\ell,\ell'}$ is a dependency graph of the family $\left(\Indens{T\in [n]_p}\right)_{T\in \setAP{\ell}{}\cup\setAP{\ell'}{}}$.

We define the following two quantities associated with a dependency graph $G$ of $(Y_i)_{1 \leq i \leq N} $:
\begin{align}\label{Eq:SteinQuantities}
\begin{split}
\SteinControl{1}{G} :=& \sum_{i = 1}^N \sum_{j \in \neighbourhoodClosed{i}{G}} \mathbb{E}( Y_i ) \mathbb{E}( Y_j ), \\
\SteinControl{2}{G} :=& \sum_{i = 1}^N \sum_{j \in \neighbourhood{i}{G} } \mathbb{E}( Y_i Y_j ).
\end{split}
\end{align}
We use a variant of the Chen-Stein method due to Arratia, Goldstein, and Gordon~\cite{ArratiaGoldsteinGordon} (in a slightly simplified form).
\begin{theorem}[Theorem~1 in~\cite{ArratiaGoldsteinGordon}]\label{thm:AGG}
	Let $(Y_i)_{1 \leq i \leq N}$ be Bernoulli random variables of expectation $p_i := \mathbb{E}( Y_i ) > 0$.  Set
	\begin{align*}
	S_N := \sum_{i=1}^N Y_i, \qquad \textrm{ and } \qquad \ \zeta := \mathbb{E}( S_N ) = \sum_{i=1}^N p_i .
	\end{align*}
	Let $G$ be a dependency graph of $(Y_i)_{1 \leq i \leq N}$, and $\SteinControl{1}{G}$, $\SteinControl{2}{G}$ as in \eqref{Eq:SteinQuantities}. Let $\cY$ be a Poisson random variable with mean $\mathbb{E}(\cY) := \zeta$. Then, for any $U \subset \mathbb{N}$,
	\begin{align*}
	\left\vert \mathbb{P}( S_N \in U ) - \mathbb{P}(\cY \in U) \right\vert \leq \SteinControl{1}{G} + \SteinControl{2}{G}.
	\end{align*}
\end{theorem}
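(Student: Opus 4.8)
Since Theorem~\ref{thm:AGG} is the Arratia--Goldstein--Gordon estimate, the plan is to run the generator (Stein--Chen) method and let the dependency graph organise the error terms; I only sketch the steps, as the computation is classical. First I would fix $U\subseteq\NN$ and let $g=g_U$ solve the Poisson--Stein recursion $\zeta\,g(k+1)-k\,g(k)=\Indens{k\in U}-\PP(\cY\in U)$, normalised by $g(0):=0$; the classical Stein-factor estimate gives $\|\Delta g\|_\infty:=\sup_{k\ge 0}|g(k+1)-g(k)|\le\min\{1,\zeta^{-1}\}\le 1$. Evaluated at $k=S_N$ the recursion reads $\zeta\,g(S_N+1)-S_N\,g(S_N)=\Indens{S_N\in U}-\PP(\cY\in U)$, so after taking expectations it suffices to bound $|\EE(\zeta\,g(S_N+1)-S_N\,g(S_N))|$ by $\SteinControl{1}{G}+\SteinControl{2}{G}$.

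Next I would expand $\zeta\,g(S_N+1)=\sum_{i}p_i\,g(S_N+1)$ and, using $Y_i\in\{0,1\}$, write $S_N\,g(S_N)=\sum_i Y_i\,g(1+Z_i+W_i)$, where for each vertex $i$ one splits $S_N=Y_i+Z_i+W_i$ with $Z_i:=\sum_{j\in\neighbourhood{i}{G}}Y_j$ and $W_i:=\sum_{j\notin\neighbourhoodClosed{i}{G}}Y_j$. Applying Definition~\ref{def:depGraph} to the disjoint sets $\{i\}$ and $V\setminus\neighbourhoodClosed{i}{G}$ (between which there is no edge) shows that $Y_i$ is independent of $W_i$. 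Inserting the reference value $g(1+W_i)$ in the $i$-th summand and using $Y_i\perp W_i$ to cancel the leading terms, one obtains
$$
p_i\,\EE\,g(S_N+1)-\EE\big(Y_i\,g(S_N)\big)=p_i\,\EE\big(g(S_N+1)-g(1+W_i)\big)-\EE\Big(Y_i\big(g(1+Z_i+W_i)-g(1+W_i)\big)\Big).
$$
Since $|g(a)-g(b)|\le|a-b|\,\|\Delta g\|_\infty$ for nonnegative integers $a,b$, the first term is at most $\|\Delta g\|_\infty\,p_i\,\EE(Y_i+Z_i)=\|\Delta g\|_\infty\,p_i\sum_{j\in\neighbourhoodClosed{i}{G}}p_j$ in modulus and the second at most $\|\Delta g\|_\infty\,\EE(Y_iZ_i)=\|\Delta g\|_\infty\sum_{j\in\neighbourhood{i}{G}}\EE(Y_iY_j)$. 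Summing over $i$ and using $\|\Delta g\|_\infty\le 1$ yields precisely $\SteinControl{1}{G}+\SteinControl{2}{G}$, and since this holds for every $U$ the proof is complete.

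The one point that needs care is the bookkeeping with the dependency graph: Definition~\ref{def:depGraph} only gives $Y_i\perp W_i$, not $(Y_j)_{j\in\neighbourhoodClosed{i}{G}}\perp(Y_j)_{j\notin\neighbourhoodClosed{i}{G}}$ — there may well be an edge from $\neighbourhood{i}{G}$ to the complement of $\neighbourhoodClosed{i}{G}$ — which is exactly why the neighbourhood interaction cannot be discarded and survives as the term $\SteinControl{2}{G}$. The other ingredient, the sharp Stein factor $\|\Delta g_U\|_\infty\le\min\{1,\zeta^{-1}\}$, is standard; the ``slightly simplified form'' in the statement merely discards the $\zeta^{-1}$ improvement (and a here-vanishing third term of Arratia--Goldstein--Gordon that would measure a failure of the dependency-graph property).
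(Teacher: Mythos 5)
Your sketch is correct, but note that the paper does not prove this statement at all: Theorem~\ref{thm:AGG} is quoted verbatim (in simplified form) from Arratia--Goldstein--Gordon, so there is no internal proof to compare against. What you have written is the standard Stein--Chen generator argument, and the key steps all check out: the decomposition $S_N=Y_i+Z_i+W_i$ with $Z_i=\sum_{j\in\neighbourhood{i}{G}}Y_j$ and $W_i=\sum_{j\notin\neighbourhoodClosed{i}{G}}Y_j$; the observation that Definition~\ref{def:depGraph} applied to the disjoint, non-adjacent sets $\{i\}$ and $V\setminus\neighbourhoodClosed{i}{G}$ yields $Y_i\perp W_i$, which kills the cross term $\EE\big((p_i-Y_i)g(1+W_i)\big)$; the Lipschitz bounds $p_i\,\EE(Y_i+Z_i)=p_i\sum_{j\in\neighbourhoodClosed{i}{G}}p_j$ and $\EE(Y_iZ_i)=\sum_{j\in\neighbourhood{i}{G}}\EE(Y_iY_j)$, which sum to exactly $\SteinControl{1}{G}$ and $\SteinControl{2}{G}$ as defined in \eqref{Eq:SteinQuantities}; and the Stein factor $\|\Delta g_U\|_\infty\le(1-e^{-\zeta})/\zeta\le\min\{1,\zeta^{-1}\}\le 1$. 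Your closing remarks are also accurate on both counts: the paper's Remark following the theorem confirms that the third Arratia--Goldstein--Gordon term $\SteinControl{3}{G}$ vanishes under this (stronger) notion of dependency graph, and the stated bound indeed discards the available factor $\min\{1,\zeta^{-1}\}$. The only caveat is that this is a sketch rather than a complete proof --- you invoke the Stein factor estimate and the solvability of the recursion without proof --- but since the theorem is itself a cited black box in the paper, supplying the classical argument at this level of detail is entirely appropriate.
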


\begin{remark}
	The theorem given in~\cite{ArratiaGoldsteinGordon} uses an additional quantity $ \SteinControl{3}{G} $ given by
	\begin{align*}
\SteinControl{3}{G} :=& \sum_{i = 1}^N \mathbb{E}\!\left( \left\vert \mathbb{E}\left( Y_i - p_i \cond (Y_j)_{j \not\in \neighbourhoodClosed{i}{G}}\right)  \right\vert  \right)
	\end{align*}
	but due to using a more restrictive notion of dependency graphs, we always have $\SteinControl{3}{G}=0$.
\end{remark}

\begin{proof}[Proof of Theorem~\ref{thm:mainUnivariate}\eqref{thm:mainUnivariatePoisson}]
We fix any $3\le \ell=\ell(n)\le n$  and aim to apply Theorem~\ref{thm:AGG} to the family $\left(\Indens{T\in [n]_p}\right)_{T\in\setAP{\ell}{}}$. The corresponding dependency graph $\depGraph{\ell}$ was defined in~\eqref{eq:depGraph}. Clearly, for any $T\in\setAP{\ell}{}$ we have  $\EE(\Indens{T\subseteq [n]_p})=p^\ell$ and thus 
\begin{align*}
\SteinControl{1}{\depGraph{\ell}} =& \sum_{T\in\setAP{\ell}{}} \sum_{T' \in \neighbourhoodClosed{T}{\depGraph{\ell}}} \mathbb{E}(\Indens{T\subseteq [n]_p}) \mathbb{E}(\Indens{T'\subseteq [n]_p}) =p^{2\ell} \sum_{r =1}^{\ell}\numberPartitionPairs{\ell}{r}=O(n^3p^{2\ell}),
\end{align*}
where the last equality holds due to Corollary~\ref{obs:numberOverlapPair}, Claim~\ref{claim:numberOtherPairs}, and Lemma~\ref {lem:mix-loose-pair}.

Next, we note that $\EE( \Indens{T\subseteq [n]_p} \Indens{T'\subseteq [n]_p} )=p^{2\ell-r}$ for all $1\le r\le \ell-1$ and  $(T,T')\in\setPartitionPairs{\ell}{r}$. Thus, we obtain
\begin{align*}
\SteinControl{2}{\depGraph{\ell}} = \sum_{T\in\setAP{\ell}{}} \sum_{T' \in \neighbourhood{T}{G} } \mathbb{E}\left(\Indens{T\subseteq [n]_p} \Indens{T'\subseteq [n]_p}\right)&=\sum_{r =1}^{\ell-1}\numberPartitionPairs{\ell}{r}p^{2\ell-r}\\ 
&=O(1)\cdot\left[ n^3p^{2\ell-1}+n^2\ell^3p^{\ell+1} \right],
\end{align*}
where the last estimate holds due to Claim~\ref{claim:numberOtherPairs} and Lemma~\ref{lem:mix-loose-pair}.

Combining these two bounds and using the assumption $n^2p^{\ell}/(\ell-1)\to c$ for some $c\in\RR_+$ yields
$$
\SteinControl{1}{\depGraph{\ell}}+\SteinControl{2}{\depGraph{\ell}}=O(n^{-1+2/\ell}+n^{-2/\ell})=o(1).
$$
The same bound holds when $\ell\rightarrow +\infty$, $\ell=o(\log n)$ and $p\ell^4\rightarrow 0$.
Thus Theorem~\ref{thm:AGG} is applicable for the family $\left(\Indens{T\in [n]_p}\right)_{T\in\setAP{\ell}{}}$ and shows that for all $U\subseteq \NN$ we have
$$
\left|\PP\left(\sum_{T \in \setAP{\ell}{} }\Indens{T\subseteq [n]_p}\in U\right) -\PP\left(\Po\left(\lambda\right)\in U\right)\right|\le o(1),
$$
where 
$$
\lambda:=\lim_{n\to+\infty}\sum_{ T \in \setAP{\ell}{} }\EE(\Indens{T\subseteq [n]_p})\stackrel{Cl.~\ref{obs:numberAP}}{=}\lim_{n\to+\infty}(1\pm o(1))\frac{n^2p^{\ell}}{2(\ell-1)}=
c/2
$$
completing the proof of Theorem~\ref{thm:mainUnivariate}\eqref{thm:mainUnivariatePoisson}.
\end{proof}

\begin{remark}\label{Rk:ImprovedPoissonApproximation}
If we do not suppose the assumption of Theorem~\ref{thm:mainUnivariate}\eqref{thm:mainUnivariatePoisson}, namely that $ n^2 p^\ell \ell^{-1} = O(1)$, we still have a Poisson approximation with $ \Po(\lambda_n) $ where $ \lambda_n := n^2 p^\ell \ell^{-1} $ provided that $ \SteinControl{1}{\depGraph{\ell}}+\SteinControl{2}{\depGraph{\ell}}=o(1) $. This is the case if $ n^3 p^{2\ell - 1} \to 0 $ and $ n^2  p^{\ell + 1} \ell^3\to 0  $, which is equivalent in the first case to $ p \ll n^{-3/(2\ell - 1) } $, and in the second case to $ p \ll n^{- 2/(\ell + 1) }\ell^{- 3/(\ell + 1) }  $. It is well known that a Poisson random variable with diverging parameter converges in distribution (after rescaling) to a Gaussian, hence, this case shows that we have a Gaussian regime for the range $ n^{-2/\ell } \ell^{1/\ell} \ll p \ll \min\{ n^{-3/(2\ell - 1) },  n^{- 2/(\ell + 1) }\ell^{- 3/(\ell + 1) } \} $. 
\end{remark}

\subsection{Gaussian regime: proof of Theorem~\ref{thm:mainUnivariate}\eqref{thm:mainUnivariateNormal}}\label{Sec:GaussianRegime}
For the normal approximation we apply a criterion due to Janson~\cite{JansonCLTwithCumulants}, which was then refined by Mikhailov~\cite{Mikhailov}. This normality criterion is based on controlling mixed cumulants of sum of random variables by means of an associated dependency graph. We follow the notation of~\cite{JansonLuczakRucinski}.  

\begin{theorem}[e.g.\ Theorem~6.21 in~\cite{JansonLuczakRucinski}]\label{Thm:MikhailovNormality}
    Let $(X_{i, n})_{1\le i \le N_n}$ be a family of random variables with dependency graph $\Gamma_n$ (as defined in Definition~\ref{def:depGraph}) and suppose that there exist constants $ \{C_r\}_{r\in\NN} $ independent of $n$, and quantities $M_n$ and $Q_n$ such that 
    \begin{align}\label{eq:FirstCondition}
    \EE\!\left( \sum_{i = 1}^{N_n} \vert X_{i, n} \vert \right)\le M_n,
    \end{align}
    and for all $ V $ of constant size (i.e.\ $ |V| $ is independent of $n$), we have 
    \begin{align}\label{eq:SecondCondition}
    \sum_{i \in \cN(V)} \EE\!\left( \vert X_{i, n}\vert \big\vert (X_{j, n})_{j \in V } \right) \leq C_{|V|} Q_n,
    \end{align}
    where $ \cN(V) := \cup_{i \in V} \cN(i) $ as in Definition~\ref{def:depGraph}.

  Let $S_n := \sum_{i=1}^{N_n} X_{i, n}$ and $\sigma_n^2 := \VV(S_n)$. If there exists an $s>2$ such that
    \begin{align}\label{Eq:MikhailovCriteria}
     \frac{M_n}{\sigma_n}\left(\frac{Q_n}{\sigma_n}\right)^{s-1} \tendsto{n}{+\infty} 0 
    \end{align}
    then, we have 
    \begin{align*}
      \frac{S_n - \EE(S_n)}{\sigma_n} \stackrel{d}{\tendsto{n}{+\infty}} \No(0,1).
    \end{align*}
\end{theorem}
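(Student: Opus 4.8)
The plan is to prove this by the \emph{method of cumulants}. Set $W_n:=(S_n-\EE(S_n))/\sigma_n$, so that $\kappa_1(W_n)=0$, $\kappa_2(W_n)=1$, and $\kappa_k(W_n)=\sigma_n^{-k}\kappa_k(S_n)$ for $k\ge 2$. Since the standard Gaussian is determined by its moments and its moments are fixed polynomials in its cumulant sequence $(0,1,0,0,\dots)$, by Theorem~\ref{thm:MethodOfMoments} it suffices to show $\kappa_k(W_n)\to0$ for every fixed integer $k\ge3$. Expanding multilinearly, $\kappa_k(S_n)=\sum\kappa(X_{i_1,n},\dots,X_{i_k,n})$ over all ordered $k$-tuples of indices in $[N_n]$.

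The first ingredient is the standard vanishing property of joint cumulants relative to a dependency graph: if the index set $\{i_1,\dots,i_k\}$ splits into two non-empty parts with no $\Gamma_n$-edge between them, the two sub-families are independent and the cumulant is $0$. Hence only those tuples whose underlying set spans a \emph{connected} subgraph of $\Gamma_n$ survive. The second ingredient is a quantitative bound on the surviving terms via a spanning-tree exploration. Using the moment--cumulant formula, $|\kappa(X_{i_1,n},\dots,X_{i_k,n})|$ is at most a constant $c_k$ times a maximum, over set partitions of $[k]$, of products over the blocks $B$ of terms $\EE\big|\prod_{j\in B}X_{i_j,n}\big|$. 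For each connected configuration one fixes a spanning tree, roots it, and sums over its vertices in breadth-first order: the sum over the root contributes a factor at most $M_n$ by~\eqref{eq:FirstCondition}, while each subsequent vertex $i$ lies in $\cN(V)$ for the already-revealed set $V$ (of size $<k$), so summing $\EE(|X_{i,n}|\mid(X_{j,n})_{j\in V})$ over $i\in\cN(V)$ contributes a factor at most $\big(\max_{r<k}C_r\big)Q_n$ by~\eqref{eq:SecondCondition}. As there are only $O_k(1)$ relevant trees, partitions and orderings, this yields $|\kappa_k(S_n)|\le C_k\,M_nQ_n^{k-1}$, with $C_k$ depending only on $k$ and $\{C_r\}_{r\in\NN}$.

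Finally, one converts this into $\kappa_k(W_n)\to0$. Taking $k=2$ gives $\sigma_n^2=\kappa_2(S_n)\le C_2M_nQ_n$, i.e.\ $(M_n/\sigma_n)(Q_n/\sigma_n)\ge 1/C_2$; combined with hypothesis~\eqref{Eq:MikhailovCriteria} this forces $Q_n/\sigma_n\to0$. For $k\ge s$ one then has $|\kappa_k(W_n)|\le C_k(M_n/\sigma_n)(Q_n/\sigma_n)^{k-1}\le C_k(M_n/\sigma_n)(Q_n/\sigma_n)^{s-1}\to0$ (eventually $Q_n/\sigma_n\le1$), and for the finitely many $k$ with $3\le k<s$ one argues by a careful interpolation between the exponents $1$ and $s-1$, using $Q_n/\sigma_n\to0$ together with the matching lower bound from the $k=2$ case. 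Thus $\kappa_k(W_n)\to0$ for all $k\ge3$ while $\kappa_2(W_n)=1$, and hence $W_n\stackrel{d}{\longrightarrow}\No(0,1)$.

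I expect the main obstacle to be the second paragraph: organising the summation so that the partition structure produced by the moment--cumulant formula is compatible with a spanning-tree traversal of each connected configuration, and checking that~\eqref{eq:SecondCondition} can be iterated as the revealed set $V$ grows — which is admissible precisely because $|V|$ never exceeds $k$, so only finitely many of the constants $C_r$ are ever invoked. A secondary technical point is the low-order range $3\le k<s$ in the last step, where one cannot dominate directly by the exponent $s-1$ and must instead combine $Q_n/\sigma_n\to0$ with the variance estimate; this is the refinement over the original criterion.
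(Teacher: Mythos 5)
You should first note that the paper does not prove this statement at all: it is quoted as a black box from the literature (Theorem~6.21 in \cite{JansonLuczakRucinski}), so your argument can only be measured against the standard proof there, which is indeed the cumulant method you outline. Your first three ingredients are the right ones and essentially correct: reduction to $\kappa_k(W_n)\to0$ for each fixed $k\ge3$, vanishing of joint cumulants over index sets that disconnect in $\Gamma_n$, and the exploration bound $|\kappa_k(S_n)|\le C_k'M_nQ_n^{k-1}$; likewise the deductions $\sigma_n^2=\kappa_2(S_n)\le C_2M_nQ_n$, hence $Q_n/\sigma_n\to0$, and the case of integers $k\ge s$ are fine.

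The genuine gap is your treatment of the integers $3\le k<s$ (nonempty whenever $s>3$). The geometric interpolation you invoke reads
\begin{align*}
\frac{M_nQ_n^{k-1}}{\sigma_n^{k}}=\left(\frac{M_nQ_n}{\sigma_n^{2}}\right)^{\frac{s-k}{s-2}}\left(\frac{M_nQ_n^{s-1}}{\sigma_n^{s}}\right)^{\frac{k-2}{s-2}},
\end{align*}
and since the exponent of the first factor is \emph{positive}, you need an \emph{upper} bound on $M_nQ_n/\sigma_n^2$; but the only information coming from the $k=2$ cumulant estimate is the \emph{lower} bound $M_nQ_n/\sigma_n^2\ge1/C_2$, which is exactly what gives $Q_n/\sigma_n\to0$ and points the wrong way here. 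Nothing in \eqref{eq:FirstCondition}--\eqref{eq:SecondCondition} prevents $\sigma_n^2\ll M_nQ_n$ (cancellation in the variance), and then your bound on $\kappa_k(W_n)$ need not vanish: e.g.\ with $Q_n/\sigma_n=n^{-1}$ and $M_n/\sigma_n=n^{s-1}/\log n$ one has $\frac{M_n}{\sigma_n}\bigl(\frac{Q_n}{\sigma_n}\bigr)^{s-1}=\frac{1}{\log n}\to0$ while $\frac{M_n}{\sigma_n}\bigl(\frac{Q_n}{\sigma_n}\bigr)^{2}=\frac{n^{s-3}}{\log n}\to\infty$. So the step as described fails; to close it one must either take $s\le3$ (the problematic range is then empty, which suffices for this paper since \eqref{Eq:MikhailovCriteria} is verified there for every $s>2$), or import the extra input $\sigma_n^2=\Theta(M_nQ_n)$, or bound the low-order cumulants by a different argument. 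A secondary defect: the multilinear expansion of $\kappa_k(S_n)$ runs over tuples with \emph{repeated} indices (already for $k=2$ the diagonal terms $\VV(X_{i,n})$ are needed to get $\sigma_n^2\le C_2M_nQ_n$), so your exploration must allow the next index to lie in the closed neighbourhood $\widehat{\cN}(V)$, which is not covered by \eqref{eq:SecondCondition} as stated with $\cN(V)$; this is a flaw of the paper's transcription of the theorem as much as of your sketch, and is harmless in the application since there $Q_n\ge1\ge\Vert X_{i,n}\Vert_\infty$.
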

Note that the proof of Theorem~\ref{Thm:MikhailovNormality} shows that the assumption~\eqref{Eq:MikhailovCriteria} becomes weaker as $s$ increases. However, we will see that for this application it is satisfied for any $s>0$.

\begin{proof}[Proof of Theorem~\ref{thm:mainUnivariate}\eqref{thm:mainUnivariateNormal}]
In the setting of $ \ell $-APs we have $S_n:=\RVnumberAP{\ell}{}$, and note that by Claim~\ref{obs:numberAP} we have $\EE(\RVnumberAP{\ell}{})=p^\ell \numberAP{\ell}$, i.e.\ we may choose $M_n:=p^\ell \numberAP{\ell}$.

Now, for $ V \subset \setAP{\ell}{} $, let $\Lambda(V) := \cup_{T\in V} T \subset [n] $ be the set of points covered by APs in $V$. We write $Z(V)$ for the LHS of~\eqref{eq:SecondCondition} and observe that
\begin{align*}
Z(V)=\sum_{T \in \cN(V) } \EE\!\left(  \prod_{a \in T} \Xi_a \bigg\vert ( \Xi_k )_{k \in \Lambda(V) } \right) &= \sum_{T \in \cN(V) } p^{ \ell - \vert T \cap \Lambda(V) \vert } \prod_{a \in T \cap \Lambda(V) } \Xi_a  \\
& \leq \sum_{T \in \cN(V) }   p^{ \ell - \vert T \cap \Lambda(V) \vert }
\end{align*}
as $ \Xi_a$ takes values in $\{0, 1 \} $. First, we consider APs $T\in\cN(V)$ in ``loose configurations'', i.e.\ $|T \cap \Lambda(V) | = 1$. Note that there are at most $O_{|V|}(n\ell)$ of these $T$ and the contribution to $Z(V)$ of each of them is $p^{\ell-1}$. On the other hand, there are at most $O_{|V|}(\ell^4)$ APs $T\in \cN(V)$ with $|T \cap \Lambda(V) | \ge 2$, and trivially, each of their contribution to $Z(V)$ is upper bounded by $1$. Together this means that there exist constants $\{C_r\}_{r\in\NN}$ and we may choose $ Q_n:= n p^{\ell - 1}\ell + \ell^4$ such that  $Z(V)\le C_{|V|}Q_n$ for all $V\subset \setAP{\ell}{}$ of constant size.

Recall that Lemma~\ref{lem:covariance} gives $ \sigma_n=(1\pm o(1))\sqrt{\numberLoosePairs{ \ell } p^{2\ell -1} + \numberOverlapPairs{\ell } p^{\ell } } $ with $ \numberOverlapPairs{\ell } = \numberAP{\ell } = \Theta(n^2\ell^{-1}) $ and $ \numberLoosePairs{ \ell } = \Theta(n^3) $ by Claim~\ref{obs:numberAP} and Lemma~\ref{lem:mix-loose-pair}, respectively. Thus we have $\sigma_n=\Theta(\sqrt{n^2p^{\ell}\ell^{-1}\left(1+np^{\ell-1}\ell\right)})$ and we distinguish two cases:

If $ np^{\ell-1}\ell \ge 10$, then $M_n/\sigma_n=O(n^{1/2}p^{1/2}\ell^{-1})$ and $Q_n/\sigma_n\le np^{\ell-1}\ell^5/\sigma_n=O(n^{-1/2}p^{-1/2}\ell^ 5)$. Thus, for any $s>2$, we have 
$$
 \frac{M_n}{\sigma_n}\left(\frac{Q_n}{\sigma_n}\right)^{s-1}=O\left((np)^{-(s-2)/2}\ell^{5(s-1)-1}\right)=o(1),
$$
since $np\to +\infty$ polynomially in $n$ and $\ell=o(\log n)$.

Otherwise, we have $ np^{\ell-1}\ell \le 10$ which implies $M_n/\sigma_n=O(np^{\ell/2}\ell^{-1/2})$ and $Q_n/\sigma_n=O(n^{-1}p^{-\ell/2}\ell^{9/2})$. Consequently, for any $s>2$, we obtain 
$$
\frac{M_n}{\sigma_n}\left(\frac{Q_n}{\sigma_n}\right)^{s-1}=O\left((n^2p^\ell)^{-(s-2)/2}\ell^{ (9s - 10)/2 }\right).
$$
Next, we recall that by Remark~\ref{Rk:ImprovedPoissonApproximation} we may additionally assume that $p$ is not too small, e.g.\ $p\ge \eps n^{-\max\{3/(2\ell-1),2/(\ell+1)\}}$ for any $\eps=\eps(n)>0$ with $\eps\to 0$. It remains to observe that when $\eps$ is decreasing sufficiently slowly this implies that $n^2p^\ell \gg e^{ \Omega(\log n)/\ell } $. Since $\ell=o(\log n)$, it follows that~\eqref{Eq:MikhailovCriteria} is satisfied and applying Theorem~\ref{Thm:MikhailovNormality} completes the proof of Theorem~\ref{thm:mainUnivariate}\eqref{thm:mainUnivariateNormal}. 
\end{proof}

\section{Bivariate fluctuations: proof of Theorem~\ref{thm:mainBivariate}}\label{Sec:GaussianRegimeBivariate}

For the rest of this Section, let $3\le \ell_2=\ell_2(n)< \ell_1=\ell_1(n)$ and $0<p=p(n)<1$ such that
\begin{align}
\label{eq:assumptionPUpper} p\ell_1^{9}&\tendsto{n}{+\infty} 0,\\
\label{eq:assumptionPLower} n^2p^{\ell_1}\ell_1^{-9}&\tendsto{n}{+\infty}+\infty,\\
\label{eq:assumptionLUpper} \frac{\ell_1}{\log n}&\tendsto{n}{+\infty} 0.
\end{align}
Our goal is to apply the method of moments (cf.\ Theorems~\ref{thm:MethodOfMoments} and~\ref{thm:CramerWoldDevice}), therefore we want to determine the asymptotics of the $k$-th moments $\EE\left[ \left(u_{\ell_1}\frac{\RVnumberAPCentralised{\ell_1}{}}{\sigma_{\ell_1}}+u_{\ell_2}\frac{\RVnumberAPCentralised{\ell_2}{}}{\sigma_{\ell_2}}\right)^k\right]$ for all $k\in \NN$ and $u_{\ell_1},u_{\ell_2}\in\RR$. (We recall that $\sigma_{\ell_i}=\sqrt{\EE(\RVnumberAPCentralised{\ell_i}{2})}$ denotes the standard deviation of $\RVnumberAP{\ell_i}{}$ for $i\in\{1,2\}$.) By definition we have 
\begin{align}\label{eq:kthMoment}
\EE\! \left[\! \left(\! u_{\ell_1}\frac{\RVnumberAPCentralised{\ell_1}{}}{\sigma_{\ell_1}} \!+\! u_{\ell_2}\frac{\RVnumberAPCentralised{\ell_2}{}}{\sigma_{\ell_2}}\!\right)^{\! \!  k} \right] \! =\!  \sum_{\mathbf{T}\in\left(\setAP{\ell_1}{}\cup \setAP{\ell_2}{}\right)^{\! k}} \!\!\!\left(\frac{u_{\ell_1}}{\sigma_{\ell_1}}\right)^{\! \! k_1(\mathbf{T})}\left(\frac{u_{\ell_2}}{\sigma_{\ell_2}}\right)^{\!\! k_2(\mathbf{T})}\EE\!\left( \prod_{T\in\mathbf{T}}\IndicatorRVAPCentralised{T}\!\right) 
\end{align}
where $k_i(\mathbf{T}):=\left|\{T\in\mathbf{T}\colon |T|=\ell_i\}\right|$, for $i\in\{1,2\}$, is the number of $\ell_i$-APs in $\mathbf{T}$.

\begin{remark}\label{Rk:BivariateToUnivariate}
Note that despite our assumption that $\ell_1\neq \ell_2$, our approach also includes the univariate scenario: for 
$3\le \ell=\ell(n)=o(\log n)$ and $0<p=p(n)<1$ such that $p\ell^{9}\to 0$ and $n^2p^{\ell}\ell^{-9}\to +\infty$, we obtain the $k$-th moment $\EE(\RVnumberAPCentralised{\ell}{k})$ by setting $\ell_2=\ell$, $\ell_1=2\ell$, $u_{\ell_2}=1$, and $u_{\ell_1}=0$.

Furthermore, we observe that in the univariate case the additional assumption~\eqref{eq:assumptionPLower} comes without loss of generality, since we already noticed in Remark~\ref{Rk:ImprovedPoissonApproximation} that $\RVnumberAPCentralised{\ell}{}\sigma_{\ell}^{-1}$ has a Gaussian limit if $n^2p^{\ell}\ell^{-1}\to+\infty$ but  $n^2p^{\ell}\ell^{-9}=O(1)$.
\end{remark}

\subsection{Main contribution to the moments}\label{SubSec:MainContribution}

In~\eqref{eq:kthMoment} we expressed the $k$-th moment of an arbitrary linear combination of $\RVnumberAPCentralised{\ell_1}{}$ and $\RVnumberAPCentralised{\ell_2}{}$ as a sum ranging over  $k$-tuples of APs, each of length $\ell_1$ or $\ell_2$. We will now show that for even $k$ the main contribution to this sum comes from $k$-tuples $\mathbf{T}=(T_1,\dots, T_k)$ with a certain matching structure, namely there exists a bijective self-inverse mapping $\match\colon [k]\to[k]$ without fixed point (we will call such permutation a \emph{matching}) such that $\mathbf{T}$ satisfies
\begin{equation}\label{eq:domTuples}
\forall i\in[k] \colon \quad T_i\cap T_{\match(i)}\neq\emptyset \quad\wedge\quad T_i\cap\left(\bigcup_{j\in[k]\setminus\{i,\match(i)\}}T_j\right)=\emptyset.
\end{equation}
We write $\mainContributionSetMatching{k}{\match}$ for the set of (ordered) $k$-tuples satisfying~\eqref{eq:domTuples} for a given matching $\match$, and observe that any two distinct sets $\mainContributionSetMatching{k}{\match}$ and $\mainContributionSetMatching{k}{\match'}$, $\match\neq\match'$, are disjoint and can be mapped bijectively onto each other. Thus let $\match^*$ be defined by 
\begin{align*}
\nu^*(2i-1)=2i, \quad\forall i\in[k/2], 
\end{align*}
and note that there are precisely $(k-1)!!$ many distinct matchings $\match$. 
 
  Let  $\mainContribution{k}$ denote the contribution of $k$-tuples in $\mainContributionSet{k}:=\dot{\bigcup}_{\match}\mainContributionSetMatching{k}{\match}$ to the $k\text{-th}$ moment $\EE\left[ \left(u_{\ell_1}\frac{\RVnumberAPCentralised{\ell_1}{}}{\sigma_{\ell_1}}+u_{\ell_2}\frac{\RVnumberAPCentralised{\ell_2}{}}{\sigma_{\ell_2}}\right)^k\right] $, and set $ \mainContributionSet{k} := \emptyset $ for $k$ odd. Then we let $\minorContributionSet{k}:=\left(\setAP{\ell_1}{}\cup\setAP{\ell_2}{}\right)^k\setminus \mainContributionSet{k}$ for all $k\in\NN$, and denote the contribution of $\minorContributionSet{k}$ by $\minorContribution{k}$. In other words, we have
\begin{align}\label{eq:split}
\EE \left[\left( u_{\ell_1}\frac{\RVnumberAPCentralised{\ell_1}{}}{\sigma_{\ell_1}}+u_{\ell_2}\frac{\RVnumberAPCentralised{\ell_2}{}}{\sigma_{\ell_2}}\right)^k\right]  & = \mainContribution{k} + \minorContribution{k}, 
\end{align}
where 
\begin{align*}
\mainContribution{k}:=& \sum_{\mathbf{T}\in \mainContributionSet{k} }\left(\frac{u_{\ell_1}}{\sigma_{\ell_1}}\right)^{k_1(\mathbf{T})}\left(\frac{u_{\ell_2}}{\sigma_{\ell_2}}\right)^{k_2(\mathbf{T})}\EE\left(\prod_{T\in\mathbf{T}}\IndicatorRVAPCentralised{T}\right) \\
\minorContribution{k}:= &\sum_{\mathbf{T}\in \minorContributionSet{k}}  \!\left(\frac{u_{\ell_1}}{\sigma_{\ell_1}}\right)^{k_1(\mathbf{T})}\left(\frac{u_{\ell_2}}{\sigma_{\ell_2}}\right)^{k_2(\mathbf{T})}\EE\left(\prod_{T\in\mathbf{T}}\IndicatorRVAPCentralised{T}\right).
\end{align*}
We observe that by the previous argument we may express $\mainContribution{k}$ as
\begin{align}\label{eq:domTerm}
\mainContribution{k}=(k-1)!!\sum_{\mathbf{T}\in\mainContributionSetMatching{k}{\match^*}}\prod_{i=1}^{k/2}\EE\!\left(\frac{u_{|T_{2i-1}|}\IndicatorRVAPCentralised{T_{2i-1}}}{\sigma_{|T_{2i-1}|}}\cdot\frac{u_{|T_{2i}|}\IndicatorRVAPCentralised{T_{2i}}}{\sigma_{|T_{2i}|}}\right).
\end{align}

\begin{lemma}\label{lem:domTerm}
Let $k \in 2\NN$ and $u_{\ell_1}, u_{\ell_2}\in \RR$, then we have 
\begin{align*}
\mainContribution{k}=(1\pm o(1))(k-1)!!\left[u_{\ell_1}^2+u_{\ell_2}^2+2u_{\ell_1}u_{\ell_2}\covarianceLeadingConstant{\ell_1}{\ell_2}\right]^{k/2}.
\end{align*}
\end{lemma}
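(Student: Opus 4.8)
The plan is to read $\mainContribution{k}$ off the grouped formula \eqref{eq:domTerm} and then compare the sum appearing there with a ``relaxed'' version in which APs belonging to different pairs are allowed to meet. Write $\mainContribution{k}=(k-1)!!\,P_k$ with
$$
P_k:=\sum_{\mathbf{T}\in\mainContributionSetMatching{k}{\match^{*}}}\ \prod_{i=1}^{k/2}g(T_{2i-1},T_{2i}),\qquad g(T,T'):=\EE\!\left(\frac{u_{|T|}\IndicatorRVAPCentralised{T}}{\sigma_{|T|}}\cdot\frac{u_{|T'|}\IndicatorRVAPCentralised{T'}}{\sigma_{|T'|}}\right),
$$
and note that $g(T,T')=0$ unless $T\cap T'\neq\emptyset$. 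Dropping from \eqref{eq:domTuples} the requirement that APs from different pairs be disjoint, the resulting sum ranges over all $k$-tuples with the fixed pairing $\match^{*}$ and factorises over the $k/2$ pairs of coordinates, hence equals $\Sigma^{k/2}$ with $\Sigma:=\sum_{(T,T')\in(\setAP{\ell_1}{}\cup\setAP{\ell_2}{})^{2}}g(T,T')$. Since the $k$-tuples in $\mainContributionSetMatching{k}{\match^{*}}$ are precisely the nonvanishing terms of $\Sigma^{k/2}$ having no cross-intersection, this yields the exact identity $P_k=\Sigma^{k/2}-E_k$, where $E_k$ collects the terms coming from $k$-tuples in which some AP of one pair meets an AP of another.

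Evaluating $\Sigma$ is then routine: splitting the sum according to whether $T,T'$ have length $\ell_1$ or $\ell_2$, and using $\sum_{T,T'\in\setAP{\ell}{}}\EE(\IndicatorRVAPCentralised{T}\IndicatorRVAPCentralised{T'})=\EE(\RVnumberAPCentralised{\ell}{2})=\sigma_{\ell}^{2}$ for $\ell\in\{\ell_1,\ell_2\}$ and the analogous identity $\sum_{T\in\setAP{\ell_1}{},\,T'\in\setAP{\ell_2}{}}\EE(\IndicatorRVAPCentralised{T}\IndicatorRVAPCentralised{T'})=\EE(\RVnumberAPCentralised{\ell_1}{}\RVnumberAPCentralised{\ell_2}{})$, one gets $\Sigma=u_{\ell_1}^{2}+u_{\ell_2}^{2}+2u_{\ell_1}u_{\ell_2}\,\EE(\RVnumberAPCentralised{\ell_1}{}\RVnumberAPCentralised{\ell_2}{})/(\sigma_{\ell_1}\sigma_{\ell_2})$, which by the definition \eqref{def:LimitingCorrelation} of $\covarianceLeadingConstant{\ell_1}{\ell_2}$ (well-defined by Lemma~\ref{lem:kappa}) equals $u_{\ell_1}^{2}+u_{\ell_2}^{2}+2u_{\ell_1}u_{\ell_2}\covarianceLeadingConstant{\ell_1}{\ell_2}+o(1)$. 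As $k$ and $u_{\ell_1},u_{\ell_2}$ are constants, $(k-1)!!\,\Sigma^{k/2}$ then equals $(1\pm o(1))(k-1)!!\bigl[u_{\ell_1}^{2}+u_{\ell_2}^{2}+2u_{\ell_1}u_{\ell_2}\covarianceLeadingConstant{\ell_1}{\ell_2}\bigr]^{k/2}$, read as $o(1)$ in the degenerate case $\covarianceLeadingConstant{\ell_1}{\ell_2}=1$, $u_{\ell_1}=-u_{\ell_2}$, where the bracket vanishes and $\Sigma\to0^{+}$. So the lemma reduces to proving $E_k=o(1)$.

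For the error term, any $k$-tuple contributing to $E_k$ has some pair of pairs, say the $a$-th and $b$-th, with $(T_{2a-1}\cup T_{2a})\cap(T_{2b-1}\cup T_{2b})\neq\emptyset$. A union bound over the $\binom{k/2}{2}$ choices of $\{a,b\}$, together with $|g(T,T')|=\frac{|u_{|T|}u_{|T'|}|}{\sigma_{|T|}\sigma_{|T'|}}\EE(\IndicatorRVAPCentralised{T}\IndicatorRVAPCentralised{T'})$ and $\EE(\IndicatorRVAPCentralised{T}\IndicatorRVAPCentralised{T'})\ge0$, gives $|E_k|\le\binom{k/2}{2}\,\widetilde{\Sigma}^{\,k/2-2}W$, where $\widetilde{\Sigma}:=\sum_{(T,T')}|g(T,T')|$ and $W$ is the normalised total weight of pairs of intersecting pairs $(T_1,T_2),(T_3,T_4)$ whose unions also intersect, i.e.\ of connected configurations of at most four APs. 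Exactly as above $\widetilde{\Sigma}=u_{\ell_1}^{2}+u_{\ell_2}^{2}+2|u_{\ell_1}u_{\ell_2}|\covarianceLeadingConstant{\ell_1}{\ell_2}+o(1)=O(1)$, so it suffices to show $W=o(1)$. Here I would use $\EE(\IndicatorRVAPCentralised{T}\IndicatorRVAPCentralised{T'})\le p^{|T|+|T'|-|T\cap T'|}$ and the variance estimate $\sigma_{\ell_i}^{2}=\Theta\bigl(n^{2}p^{\ell_i}\ell_i^{-1}(1+np^{\ell_i-1}\ell_i)\bigr)$ from Lemma~\ref{lem:covariance} (via Claim~\ref{obs:numberAP} and Lemma~\ref{lem:mix-loose-pair}), and split $W$ according to the types — loose, overlap, or intermediate — of the two within-pair intersections and of the cross-intersection, bounding each of the finitely many resulting sums by the relevant counting result (Corollary~\ref{obs:numberOverlapPair}, Claim~\ref{claim:numberOtherPairs}, Lemma~\ref{lem:mix-loose-pair}). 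The dominant contribution, three loose intersections with all four APs of the longer length $\ell_1$, is of order $O(\ell_1^{2}/n)=o(1)$ because $\ell_1=o(\log n)$; the intermediate-intersection contributions are absorbed using $p\ell_1^{9}\to0$ from \eqref{eq:assumptionPUpper}, while $n^{2}p^{\ell_1}\ell_1^{-9}\to+\infty$ from \eqref{eq:assumptionPLower} keeps $\sigma_{\ell_1}$ from being too small. Substituting $E_k=o(1)$ into $\mainContribution{k}=(k-1)!!(\Sigma^{k/2}-E_k)$ then completes the proof.

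The main obstacle is the estimate $W=o(1)$. Although it concerns only connected unions of at most four APs — so no exploration process is needed here, unlike for the complementary contribution $\minorContribution{k}$ — it requires a careful case-by-case comparison, over every way of realising the three intersection ``edges'' linking the four APs, between the powers of $p$ gained at the two within-pair overlaps and the four normalising factors $\sigma_{|T_i|}$; this is exactly where the hypotheses \eqref{eq:assumptionPUpper}–\eqref{eq:assumptionLUpper} enter.
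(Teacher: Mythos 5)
Your argument is correct, but it is organised quite differently from the paper's. The paper evaluates $\mainContribution{k}$ by building the tuple in $\mainContributionSetMatching{k}{\match^*}$ pair by pair: at step $i$ it restricts the admissible pairs to the subfamily $\cM_i^*\subseteq\cM_i$ avoiding the previously placed APs, and Claim~\ref{claim:smallError} — a purely \emph{unweighted} counting statement, using only $\ell_1=o(\log n)$ — shows $|\cM_i\setminus\cM_i^*|=o(|\cM_i|)$; since all pairs in a given $\cM_i$ carry the same non-negative weight, each factor is perturbed by only $1\pm o(1)$. You instead factorise exactly, writing $\mainContribution{k}=(k-1)!!\,(\Sigma^{k/2}-E_k)$, and control $E_k$ by a union bound over the $\binom{k/2}{2}$ choices of two cross-intersecting pairs, which correctly reduces everything to the single \emph{weighted} estimate $W=o(1)$ on connected configurations of four APs. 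The reduction itself ($|E_k|\le\binom{k/2}{2}\widetilde{\Sigma}^{k/2-2}W$ with $\widetilde{\Sigma}=O(1)$, using $\EE(\IndicatorRVAPCentralised{T}\IndicatorRVAPCentralised{T'})\ge0$) is sound, and the identification of $\Sigma$ with $u_{\ell_1}^2+u_{\ell_2}^2+2u_{\ell_1}u_{\ell_2}\covarianceLeadingConstant{\ell_1}{\ell_2}+o(1)$ is exactly as in the paper; your explicit remark about the degenerate case where the bracket vanishes (so that the conclusion must be read additively as $o(1)$) is in fact a point the paper glosses over. The trade-off is that your route replaces the paper's clean cardinality comparison by a second-moment-type case analysis for $W$ over the intersection types of the three ``edges'' of the 4-AP configuration, normalised by $\sigma_{|T_1|}\cdots\sigma_{|T_4|}$; this is where the hypotheses on $p$ enter for you, whereas the paper needs them only for $\minorContribution{k}$. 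That case analysis does close — e.g.\ the all-loose, all-$\ell_1$ configuration contributes $O(n^5\ell_1^2\,p^{4\ell_1-2}/\sigma_{\ell_1}^4)=O(\ell_1^2/n)$ using $\sigma_{\ell_1}^2=\Theta(n^3p^{2\ell_1-1}+n^2p^{\ell_1}\ell_1^{-1})$, and the overlap and intermediate cases are handled as you indicate — but as written it remains a sketch, and it is the only place where your proof is not yet complete.
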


\begin{proof}
We enumerate the $k$-tuples $\mathbf{T}=(T_1,\dots,T_k)\in\mainContributionSetMatching{k}{\match^*}$ in a specific order. Define for $ j \in \{1, 2\} $,
\begin{align*}
\Theta_j(\mathbf{T}) & := \{ i \in [k/2] : |T_{2 i - 1}| = |T_{2i}| = \ell_j \}.
\end{align*}
and $ \Theta_3(\mathbf{T}) := [k/2]\setminus ( \Theta_1 \cup \Theta_2) $. In other words, $\Theta_j(\mathbf{T})$ is the set of intersecting pairs of $\ell_j$-APs in $\mathbf{T}$, $j\in\{1,2\}$, and $\Theta_3(\mathbf{T})$ is the set of mixed intersecting pairs. 

Let $ \theta_i := |\Theta_i(\mathbf{T})| $ for $ i \in \{1, 2, 3 \} $ so that $0\le \theta_{1}\le k/2$, $0\le \theta_{2}\le k/2-\theta_{1}$ and $\theta_{3} =k/2-\theta_{1}-\theta_{2}$. We now consider the set $[k/2]$ as a set of distinct ``labels'', and partition $[k/2]$ into classes $\cP_1$, $\cP_2$, and $\cP_{3}$ of sizes $\theta_{1}$, $\theta_2$, and $\theta_{3}$, respectively. Note that there are precisely $\binom{k/2}{\theta_{1}, \theta_{2}, \theta_{3}}$ many choices for this. We proceed in rounds $i=1,\dots, k/2$ where  we distinguish three cases according to the $i$-th label: 
\begin{enumerate}[(a)]
\item if $i\in \cP_1$, then we  choose an integer $1\le m_i\le \ell_1$ and set 
\begin{align*}
\cM_i:=\left\{(T,T')\in\setAP{\ell_1}{2}\colon |T\cap T'|=m_i\right\} ;
\end{align*}
\item if $i\in \cP_2$, then we choose an integer $1\le m_i\le \ell_2$ and set 
\begin{align*}
\cM_i:=\left\{(T,T')\in\setAP{\ell_2}{2}\colon |T\cap T'|=m_i\right\} ;
\end{align*}
\item and  if $i\in \cP_3$, then we choose an integer $1\le m_i\le \ell_2$ and set 
\begin{align*}
\cM_i:=\left\{(T,T')\in(\setAP{\ell_1}{}\times \setAP{\ell_2}{})\cup (\setAP{\ell_2}{}\times \setAP{\ell_1}{})\colon|T\cap T'|=m_i \right\}.
\end{align*}
\end{enumerate}

In each case, note that some of the elements $(T,T')$ of $\cM_i$ might not be valid choices for $(T_{2i-1},T_{2i})$, as $T\cup T'$ may contain elements from $T_j$ for some $j\in[2i-2]$ (thus violating~\eqref{eq:domTuples} and the definition of $\mainContributionSet{k}$). Nonetheless, we claim that almost all of them are indeed valid. More formally, let $$\cM_i^*:=\left\{(T,T')\in\cM_i\colon (T\cup T')\cap \left(\bigcup_{j=1}^{2i-2}T_j\right)=\emptyset\right\}$$
and note that $\left|\bigcup_{j=1}^{2i-2}T_j\right|\le k\ell_1$ for all $i\in[k/2]$. Now, observe that we can express~\eqref{eq:domTerm} by
\begin{align}\nonumber
\mainContribution{k}&=(k-1)!!\sum_{\theta_{1}=0}^{k/2}\sum_{\theta_{2}=0}^{k/2-\theta_{1}}\binom{k/2}{\theta_{1},\theta_{2},\theta_{3}}u_{\ell_1}^{2\theta_{1}+\theta_{3}}u_{\ell_2}^{2\theta_{2}+\theta_{3}}\\
&\hspace{5cm}\times\prod_{i=1}^{k/2}\left(\sum_{m_i}\sum_{(T,T')\in\cM_i^*}\frac{\EE\left(\IndicatorRVAPCentralised{T}\IndicatorRVAPCentralised{T'}\right)}{\sigma_{|T|}\sigma_{|T'|}}\right).\label{eq:domTerm2}
\end{align}

\begin{claim}\label{claim:smallError}
For any $R\subseteq [n]$ of size at most $k\ell_1$ we have 
$$
\left|\left\{(T,T')\in\cM_i \colon (T\cup T')\cap R\neq\emptyset \right\}\right|=o(|\cM_i|).
$$
\end{claim}
Before we prove Claim~\ref{claim:smallError}, we show how to complete the argument assuming this statement. Indeed, as the contribution from each term in $\cM_i$ is the same, Claim~\ref{claim:smallError} shows that the error introduced by replacing $\cM_i^*$ with $\cM_i$ in~\eqref{eq:domTerm2} is negligible: it is accounted for by a factor of $(1\pm o(1))$. Moreover, note that 
$$
\sum_{m_i}\sum_{(T,T')\in\cM_i}\frac{\EE\left(\IndicatorRVAPCentralised{T}\IndicatorRVAPCentralised{T'}\right)}{\sigma_{|T|}\sigma_{|T'|}}=
\begin{cases}
\EE(\RVnumberAPCentralised{\ell_1}{2})/\sigma_{\ell_1}^2=1,& \text{ for } i\in\cP_1,\\
\EE(\RVnumberAPCentralised{\ell_2}{2})/\sigma_{\ell_2}^2=1,& \text{ for } i\in\cP_2,\\
2\EE(\RVnumberAPCentralised{\ell_1}{}\RVnumberAPCentralised{\ell_2}{})/(\sigma_{\ell_1}\sigma_{\ell_2})\to 2\covarianceLeadingConstant{\ell_1}{\ell_2},& \text{ for } i\in\cP_3.
\end{cases}
$$
Consequently, we obtain
\begin{align*}
\mainContribution{k} &=(1\pm o(1))(k-1)!!\sum_{\theta_{1}=0}^{k/2}\sum_{\theta_{2}=0}^{k/2-\theta_{1}}\binom{k/2}{\theta_{1},\theta_{2},\theta_{3}}\left(u_{\ell_1}^2\right)^{\theta_{1}}\left(u_{\ell_2}^2\right)^{\theta_{2}}(2u_{\ell_1}u_{\ell_2}\covarianceLeadingConstant{\ell_1}{\ell_2})^{\theta_3}\\
&=(1 \pm o(1))(k-1)!!\left[u_{\ell_1}^2+u_{\ell_2}^2+2u_{\ell_1}u_{\ell_2}\covarianceLeadingConstant{\ell_1}{\ell_2}\right]^{k/2}, 
\end{align*}
as claimed.

\begin{proof}[Proof of Claim~\ref{claim:smallError}]
Fix an arbitrary $R\subseteq [n]$ of size at most $k\ell_1$. Denote $\cM_i':=\left\{(T,T')\in\cM_i \colon (T\cup T')\cap R\neq\emptyset \right\}$. Note first that once $T$ is fixed, the number of choices for $T'$ with $|T\cap T'|\ge 2$ is at most $O(\ell_1^4)$, as $T'$ is completely determined by choosing two elements in $T$ (for which there are at most $\ell_1^2$ choices) and deciding their positions within $T'$ (also at most $\ell_1^2$ choices).

We first deal with the case $m_i\ge 2$. We will see that $|\cM_i|=\Omega(n^2/\ell_1)$ and $|\cM_i'|=O(n\ell_1^5)$, and thus $|\cM_i'|=o(|\cM_i|)$ since $\ell_1=o(\log n)$. Indeed, note that for every $2\ell_1$-AP $T''$, we can let $T:=\{T''(1),\ldots,T''(|T|)\}$ and $T':=\{T''(|T|-m_i+1),\ldots,T''(|T|+|T'|-m_i)\}$. Then $(T,T')\in \cM_i$. Thus, $|\cM_i|\ge \numberAP{2\ell_1}=\frac{n^2}{2(2\ell_1-1)} (1 - o(1) )$ by Claim~\ref{obs:numberAP}. On the other hand, to obtain a pair $(T,T')$ in $\cM_i'$, we need to choose first some $x\in (T\cup T')\cap R$, which has at most $|R|\le k\ell_1$ choices. Then the arithmetic progression containing $x$, say $T$, is determined by picking a common difference, for which there are at most $n$ choices. Then by the observation above, the number of choices for $T'$ with $|T\cap T'|\ge 2$ is $O(\ell_1^4)$. Therefore, $|\cM_i'|=O(\ell_1\cdot n\cdot \ell_1^4)$ as claimed.

We then deal with the case $m_i=1$. Similarly, we show that $|\cM_i|=\Omega(n^3\ell_1^{-2})$ and $|\cM_i'|=O(n^2\ell_1)$, and hence $|\cM_i'|=o(|\cM_i|)$ since $\ell_1=o(\log n)$. Indeed, to get a pair $(T,T')$ in $\cM_i$, we have at least $\numberAP{\ell_1}$ choices to fix $T$ and then, upon choosing some $x\in T$ as its intersection with $T'$, there are at least $\frac{n/2}{\ell_1-1}$ choices to choose the common difference of $T'$. This is because if $x\ge n/2$ ($x\le n/2$ respectively), then we can find $T'$ with $x$ as the last (first respectively) element. Again there are at most $O(\ell_1^5)$ such $T'$ intersecting with $T$ at more than one place, we then have $|\cM_i|\ge \numberAP{\ell_1}\cdot \frac{n/2}{\ell_1-1}-O(\ell_1^5)=\Omega\left(n^3\ell_1^{-2}\right),$ by Claim~\ref{obs:numberAP}.
On the other hand, a pair in $\cM_i'$ is determined by choosing their single intersection point with $R$ and their common differences. So $|\cM_i'|\le |R|\cdot n\cdot n=O(n^2\ell_1)$, completing the proof of the claim.
\end{proof}

As demonstrated earlier, this also completes the proof of Lemma~\ref{lem:domTerm}.
\end{proof}

\subsection{Minor contribution to the moments}\label{Subsec:MinorContribution}

Next we turn our attention to $k$-tuples in $\minorContributionSet{k}=\left(\setAP{\ell_1}{}\cup\setAP{\ell_2}{}\right)^k\setminus \mainContributionSet{k}$, where $k\in\NN$ and $\mainContributionSet{k}=\emptyset$ if $k$ is odd.
\begin{lemma}\label{lem:minorTerm}
Let $k\in\NN$, we have
$$
\minorContribution{k}=\sum_{\mathbf{T}\in\minorContributionSet{k}}\EE\left(\prod_{i=1}^{k}\frac{u_{|T_{i}|}\IndicatorRVAPCentralised{T_{i}}}{\sigma_{|T_{i}|}}\right)=o(1). 
$$
\end{lemma}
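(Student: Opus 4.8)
The plan is to show that every $k$-tuple $\mathbf{T}\in\minorContributionSet{k}$ with nonzero contribution is ``wasteful'' in the sense that it uses strictly fewer degrees of freedom (powers of $n$) than a pure matching configuration would, and that this loss is not compensated by the normalisation $\prod_i\sigma_{|T_i|}^{-1}$. First I would discard the tuples that contribute zero: if some $T_i$ in $\mathbf{T}$ is disjoint from all other $T_j$, then by independence $\EE(\prod_T \IndicatorRVAPCentralised{T})=\EE(\IndicatorRVAPCentralised{T_i})\cdot\EE(\prod_{j\ne i}\IndicatorRVAPCentralised{T_j})=0$. Hence we may restrict to tuples whose associated intersection graph (on vertex set $[k]$, with $i\sim j$ iff $T_i\cap T_j\ne\emptyset$) has no isolated vertex. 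Writing this graph as a disjoint union of connected components $C_1,\dots,C_q$ on the equivalence classes of APs, the expectation factorises as a product over components, so $\EE(\prod_T\IndicatorRVAPCentralised{T})=\prod_{s=1}^q\EE(\prod_{i\in C_s}\IndicatorRVAPCentralised{T_i})$, and each factor is bounded in absolute value by $p^{|\bigcup_{i\in C_s}T_i|}$ (the probability that the union of APs in that component lies in $[n]_p$). The matching tuples in $\mainContributionSet{k}$ are exactly those for which every component is a single edge $\{i,\match(i)\}$ with $T_i\cap T_{\match(i)}$ of size $1$; any tuple in $\minorContributionSet{k}$ that still has nonzero contribution either has a component that is an edge with an intersection of size $\ge 2$, or a component spanning $\ge 3$ of the indices.

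The core of the argument is an \emph{exploration/enumeration bound}: for a fixed unlabelled ``shape'' of a connected component spanning a set $S\subseteq[k]$ of indices (i.e.\ which pairs intersect, in which positions, with which length pattern $\ell_1$ vs.\ $\ell_2$), I would bound the number of ways to realise that component inside $[n]$ by building the APs one at a time in a connected order $T_{i_1},T_{i_2},\dots$. The first AP costs $O(n^2/\ell_1)$ choices by Claim~\ref{obs:numberAP}; each subsequent AP $T_{i_t}$, being forced to intersect some already-placed AP, costs at most $O(n\ell_1)$ if it attaches by a single common point (a ``loose'' attachment) and only $O(\ell_1^{O(1)})$ if it shares $\ge 2$ points with an earlier AP (a ``tight'' attachment), exactly as in the proof of Claim~\ref{claim:smallError} and in~\cite{BollobasCooleyKangKoch}. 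Combining with the factor $p^{|\bigcup_{i\in S}T_i|}$: a loosely-attached new AP of length $\ell$ contributes a factor $O(n\ell_1 p^{\ell-1})$, and comparing against the ``budget per index'' $\sigma_{\ell_1}^{-2}$ (recall $\sigma_{\ell}^2=(1\pm o(1))[\numberLoosePairs{\ell}p^{2\ell-1}+\numberAP{\ell}p^\ell]=\Theta(n^2p^\ell\ell^{-1}(1+\psi_\ell))$ by Lemma~\ref{lem:covariance}) one checks that each loose attachment is ``balanced'' and each tight attachment or each extra edge within a component loses a factor that is $o(1)$ under the hypotheses~\eqref{eq:assumptionPUpper}--\eqref{eq:assumptionLUpper}. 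The key inequalities are precisely the ones already used in Lemma~\ref{lem:covariance} to kill the summands with $2\le r\le\ell_2-1$, namely that $p\cdot(\text{poly in }\ell_1)=o(1)$, together with $\numberAP{\ell}p^\ell=O(\numberLoosePairs{\ell}p^{2\ell-1})$ or the reverse, whichever regime holds; the exponents $9$ appearing in~\eqref{eq:assumptionPUpper}--\eqref{eq:assumptionPLower} are exactly the slack needed for the $\ell_1^{O(1)}$ factors from tight attachments.

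Concretely I would proceed as follows: (i) reduce to tuples with no isolated index and factor over components; (ii) for a component that is a single loose edge between an $\ell_i$- and $\ell_j$-AP, note this is what $\mainContributionSet{k}$ already accounts for, so in $\minorContributionSet{k}$ every surviving component is either a loose edge \emph{with at least one vertex also touched elsewhere} — impossible if it is a whole component — hence every surviving component either (a) consists of exactly two APs sharing $r\ge 2$ points, or (b) spans $m\ge 3$ indices; (iii) for type (a), the component's contribution is $O(\numberMixedPartitionPairs{\ell_i}{\ell_j}{r}p^{\ell_i+\ell_j-r})/(\sigma_{\ell_i}\sigma_{\ell_j})$, which is $o(1)$ by exactly the computation in the proof of Lemma~\ref{lem:covariance} (using Claim~\ref{claim:numberOtherPairs}); (iv) for type (b), run the exploration bound above to show the component contributes $o(1)$, losing a genuine power of $p\ell_1^{O(1)}$ or $\ell_1^{O(1)}/(n p^{\ell_1-1}\ell_1)$ relative to the same number of indices arranged as loose edges; (v) sum over the bounded number ($O_k(1)$, since $k$ is constant) of component shapes and over the partition of $[k]$ into components, absorbing everything into a single $o(1)$. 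The main obstacle I expect is step (iv): one must set up the exploration so that the AP attaching to the partially-built structure is charged correctly — in particular handling the arithmetic constraint that when a new AP shares $\ge 2$ points with an old one its common difference is essentially forced (a divisor of an already-determined difference), which is what makes tight attachments cheap; this is the same subtlety that made the switching method of~\cite{McKay,GaoSato} awkward here and forces the combinatorial exploration argument instead. A secondary nuisance is bookkeeping the length pattern ($\ell_1$ vs $\ell_2$) and the factors $u_{\ell_1},u_{\ell_2}$, but since $k$ is fixed these only contribute constants and are harmless.
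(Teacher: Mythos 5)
Your overall architecture is the paper's own: discard tuples with an isolated AP, factor the expectation over the components of the intersection graph, and enumerate each component by an exploration that charges $O(n^2/\ell)$ for the root AP, $O(n\ell_1 p^{\ell-1})$ for each loosely attached AP and $O(\ell_1^{O(1)}p^{\,\cdot})$ for each tightly attached one, balancing these against the normalisation $\sigma_{|T|}^{-1}$ per AP; this is exactly Propositions~\ref{prop-ma} and~\ref{prop-sa} and the case analysis that follows them. However, your step (ii) misreads the definition of the main term and this propagates into a genuine error. Condition~\eqref{eq:domTuples} only requires $T_i\cap T_{\match(i)}\neq\emptyset$, so $\mainContributionSet{k}$ contains \emph{all} perfect-matching configurations, including those whose pairs overlap in $r\ge 2$ points or coincide entirely; conversely, a tuple in $\minorContributionSet{k}$ may very well contain loose-edge components, provided some \emph{other} component spans at least three indices. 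Thus your dichotomy ``every surviving component is of type (a) or (b)'' is false in both directions: it fails to cover minor tuples having a loose-pair component alongside a larger component, and it wrongly relegates $r\ge2$ pair components to the minor set. Moreover the estimate you invoke in (iii) is false for $r$ near $\ellTwo$: a component consisting of an overlap pair (e.g.\ $T_{i}=T_{j}$) contributes $\numberAP{\ell}p^{\ell}/\sigma_{\ell}^2$, which by Lemma~\ref{lem:covariance} is $\Theta(1)$ — indeed $1-o(1)$ in the overlap-pair regime $\psi_{\ell_1}\to0$, which is permitted by the hypotheses. The computation in Lemma~\ref{lem:covariance} kills only $2\le r\le\lfloor 2\ellTwo/3\rfloor$ and the range up to $\ellTwo-1$, not $r=\ellTwo$. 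The correct bookkeeping is: every pair component contributes $O(1)$ (it is at most $\EE(\RVnumberAPCentralised{\ell}{}\RVnumberAPCentralised{\ell'}{})/(\sigma_{\ell}\sigma_{\ell'})\le 1$), every nonzero minor tuple must contain a component with at least three APs, and only those components need to be shown to contribute $o(1)$ — which is your step (iv) and the real content of the paper's proof.

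A secondary point: in (v) you sum over ``$O_k(1)$ component shapes'', but when $\ell_1=\ell_1(n)\to+\infty$ the intersection sizes range over $[\ell_1]$, so the number of shapes is not bounded in $n$. The paper confronts this by summing over type vectors $\typeVect\in\{0,\dots,\ell_1\}^k$ and offsetting the $O(\ell_1^{q})$ count of vectors with $q$ intermediate overlaps against an extra saving of $o(\ell_1^{-1})$ per such overlap (this is where the exponent $9$ in \eqref{eq:assumptionPUpper}--\eqref{eq:assumptionPLower} earns its keep). Your tight-attachment count $O(\ell_1^{O(1)})$, which determines the new AP from two old points irrespective of the eventual intersection size, can absorb this summation, but as written the step is not justified.
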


We start with some preparation. We will change the order of summation in an algorithmic fashion as described below. First we fix an arbitrary total order $\totalOrder$ of the set $\setAP{\ell_1}{}\cup\setAP{\ell_2}{}$ such that all $\ell_1$-APs come before any $\ell_2$-AP, i.e.\ we have $\totalOrder(T)<\totalOrder(T')$ for all $T\in \setAP{\ell_1}{}$ and $T'\in\setAP{\ell_2}{}$. We now explore any (non-empty) finite collection  of APs component by component. Roughly speaking, given $\mathbf{T}$, let $H$ be an auxiliary $k$-vertex graph, in which each vertex represents an AP in $\mathbf{T}$ and two vertices are adjacent if and only if the corresponding APs have non-empty intersection. Then we will explore $V(H)$, moving from one vertex to one of its neighbours according to the ordering $\totalOrder$ and start the search from a new component whenever the current one is exhausted. For $ \mathbf{T} \in \bigcup_{k\in\NN}\left(\setAP{\ell_1}{}\cup\setAP{\ell_2}{}\right)^k $, we set $ |\mathbf{T}| := \inf\left\{k \geq 1 : \mathbf{T} \in \left(\setAP{\ell_1}{}\cup\setAP{\ell_2}{}\right)^k \right\} $. More precisely, we perform the following algorithm:

\medskip

\noindent\rule{3cm}{3pt}

\begin{enumerate}[(I)]
\item[] \hspace{-1cm}\textsc{INPUT}: $\mathbf{T}\in \bigcup_{k\in\NN}\left(\setAP{\ell_1}{}\cup\setAP{\ell_2}{}\right)^k$.\vspace{.2cm}
\item Initialise the inactive list $\cL_i$ and active list $\cL_a$: $\cL_i\leftarrow\mathbf{T}$, $\cL_a\leftarrow\emptyset$, and $j\leftarrow1$.
\item Start a new component: If $\cL_a=\emptyset$, then let $\cL_a\leftarrow \{\min_{\totalOrder}\cL_i\}$.
\item Set:
\begin{align*}
T_j&\leftarrow \min_{\totalOrder}\cL_a,\\ 
\typeSize_j&\leftarrow |T_j|,  \quad (\mbox{size of the current AP})\\
\type_j&\leftarrow \left|T_j\cap \bigcup_{j'=1}^{j-1}T_{j'}\right|,\quad (\mbox{size of the overlap with previous APs})\\
\cC &\leftarrow \{T\in \cL_i\colon T\cap T_j\neq\emptyset\}. \quad (\mbox{current component})
\end{align*}
\item Update: 
\begin{align*}
\cL_a&\leftarrow (\cL_a\cup \cC)\setminus\{T_j\},\\
\cL_i&\leftarrow \cL_i\setminus \cC.
 \end{align*}
\item If $j=|\mathbf{T}|$, then STOP; otherwise, set $j\leftarrow j+1$ and return to step (II).\vspace{.2cm}
\item[] \hspace{-1cm}\textsc{OUTPUT}: $\reordering(\mathbf{T}):=(T_1,\dots, T_{|\mathbf{T}|})$ and $\typeFunction(\mathbf{T}):=(\typeVect,\typeSizeVect)$, where $\typeVect=(\type_1,\type_2,\ldots, \type_{|\mathbf{T}|})$ and $\typeSizeVect=(\typeSize_1,\typeSize_2,\ldots, \typeSize_{|\mathbf{T}|})$. 
\end{enumerate}

\noindent\rule{3cm}{3pt}

\medskip

Note that  any permutation $\mathbf{T}'$ of the input $\mathbf{T}$ will result in the same ordered tuple $\reordering(\mathbf{T}')=(T_1,\dots,T_{|\mathbf{T}|})$. We now assume that $|\mathbf{T}|=k$. Observe that $\typeVect$ and $\typeSizeVect$ satisfy 
\begin{align}\label{eq:valid0}
\forall i\in[k]&\colon \typeSize_i\in\{\ell_1,\ell_2\},\\\label{eq:valid1}
\forall i\in[k]&\colon 0\le \type_i\le \typeSize_i,\\
\forall i\in[k]&\colon \{\type_i=0\}\implies \{\typeSize_i=\ell_1\} \vee\{\typeSize_j=\ell_2,\, \forall j=i,\dots,k\}, \label{eq:valid2}
\end{align}
where~\eqref{eq:valid2} follows from the choice of $\totalOrder$.

For $r\in\{0,1,\dots,\ell_1\}$ and $j\in\{1,2\}$ we define index sets 
\begin{align*}
\IndexSetMixed{r}{\ell_j}:=\{i\in[k]\colon \type_i=r , \ \typeSize_i=\ell_j\}, \qquad \IndexSet{r}:=\IndexSetMixed{r}{\ell_1}\cup\IndexSetMixed{r}{\ell_2}.
\end{align*}
Additionally, note that if the input $\mathbf{T}$ is such that there exists $ i\in[k-1]$ for which $ \type_i=\type_{i+1}=0$, then $T_i$ is disjoint from $\bigcup_{j\in[k]\setminus\{i\}}T_j$ implying $\EE\left(\prod_{j=1}^{k}\IndicatorRVAPCentralised{T_{j}}\right)=\EE\IndicatorRVAPCentralised{T_i}\cdot\EE\left(\prod_{j\in [k]\setminus\{i\}}\IndicatorRVAPCentralised{T_{j}}\right)=0$, i.e.~such $\mathbf{T}$ does not contribute to $\minorContribution{k}$. Consequently, we have 
\begin{equation}\label{eq:consecZero}
\forall i\in[k-1]\colon \type_i+\type_{i+1}>0.
\end{equation}
Similarly, $\type_k>0$, since otherwise $\IndicatorRVAPCentralised{T_k}$ is independent from $ (\IndicatorRVAPCentralised{T_1}, \dots, \IndicatorRVAPCentralised{T_{k - 1}}) $ and thus $\mathbf{T}$ does not contribute to $\minorContribution{k}$. We write $$\typeVectSpaceValid{k}:=\{\typeVect\in\{0,1,\ldots,\ell_1\}^k\colon \typeVect \text{ satisfies }\eqref{eq:consecZero}\text{ and } \type_k>0\}$$ 
for the set of all \emph{type} vectors of length $k$ which do not contain two consecutive zeros and do not end in a zero. In particular, this implies that we may assume $\IndexSetSize{0}\le \frac{k}{2}-1$ for even $k$ and $\IndexSetSize{0}\le \frac{k-1}{2}$ for odd $k$, in other words, we have \begin{equation}\label{eq:maxComponents}
\IndexSetSizeMixed{0}{\ell_1}+\IndexSetSizeMixed{0}{\ell_2}\le \left\lceil k/2 \right\rceil-1.
\end{equation}

Next, for any type vector $\typeVect\in\{0,1,\dots,\ell_1\}^k$, we define the set of \emph{valid size-type} vectors
$$
\typeSizeVectSpace{k}{\typeVect}:=\left\{\typeSizeVect\in \{\ell_1,\ell_2\}^k\colon (\typeVect,\typeSizeVect) \text{ satisfies }\eqref{eq:valid0},~\eqref{eq:valid1}, \text{ and}~\eqref{eq:valid2} \right\}.
$$

The main idea is to enumerate the sum in~\eqref{eq:kthMoment} by first choosing the vector $\typeVect\in\{0,1,\dots,\ell_1\}^k$, then a valid size-type vector $\typeSizeVect\in\typeSizeVectSpace{k}{\typeVect}$, and lastly a tuple $(T_1,\dots,T_k)\in \minorContributionSet{k}$ such that $\typeFunction(T_1,\dots,T_k)=(\typeVect,\typeSizeVect)$. In terms of formula, we obtain
\begin{align}\label{Eq:SummationWithBFS}
\minorContribution{k}=\sum_{\typeVect\in\typeVectSpaceValid{k}}\sum_{\typeSizeVect\in\typeSizeVectSpace{k}{\typeVect}}\sum_{\substack{\mathbf{T}\in \minorContributionSet{k}\\\typeFunction(\mathbf{T})=(\typeVect,\typeSizeVect)} }\EE\bigg(\prod_{T\in \mathbf{T}}\frac{u_{|T|}\IndicatorRVAPCentralised{T}}{\sigma_{|T|}}\bigg)=\sum_{\typeVect\in\typeVectSpaceValid{k}}\sum_{\typeSizeVect\in\typeSizeVectSpace{k}{\typeVect}}\numberMixedAPTuples{\typeVect,\typeSizeVect}\cdot \contribution{\typeVect,\typeSizeVect},
\end{align}
where
\begin{align*}
\numberMixedAPTuples{\typeVect,\typeSizeVect}:=\left|\left\{\mathbf{T}\in \minorContributionSet{k}\colon \typeFunction(\mathbf{T})=(\typeVect,\typeSizeVect)\right\}\right|
\end{align*}
denotes the number of tuples with given type vectors $(\typeVect,\typeSizeVect)$, and
\begin{align*}
\contribution{\typeVect,\typeSizeVect}:= \left(\prod_{i\in[k]}\frac{u_{\typeSize_i}}{\sigma_{\typeSize_i}}\right) \frac{1}{\numberMixedAPTuples{\typeVect,\typeSizeVect} } \sum_{\substack{\mathbf{T}\in \minorContributionSet{k}\\\typeFunction(\mathbf{T})=(\typeVect,\typeSizeVect)}}\EE(\IndicatorRVAPCentralised{T_1}\cdots \IndicatorRVAPCentralised{T_k})
\end{align*}
is the average contribution to $\minorContribution{k}$ of a $k$-tuple with given type vectors $(\typeVect,\typeSizeVect)$.

We first aim to bound the average contribution $ \contribution{\typeVect,\typeSizeVect} $.

\begin{prop}\label{prop-ma}
Let $\typeVect\in \typeVectSpaceValid{k}$ and $\typeSizeVect\in\typeSizeVectSpace{k}{\typeVect}$, then we have 
$$
\contribution{\typeVect,\typeSizeVect}=(1\pm o(1))\left(\prod_{i\in[k]}\frac{u_{\typeSize_i}}{\sigma_{\typeSize_i}}\right)p^{\sum_{i\in[k]}(\typeSize_i-\type_i)}.
$$
\end{prop}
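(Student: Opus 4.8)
The plan is to compute $\EE(\IndicatorRVAPCentralised{T_1}\cdots\IndicatorRVAPCentralised{T_k})$ for a fixed tuple $\mathbf T$ with $\typeFunction(\mathbf T)=(\typeVect,\typeSizeVect)$ and show that, up to a $(1\pm o(1))$ factor, it equals $p^{\sum_i(\typeSize_i-\type_i)}=p^{|T_1\cup\cdots\cup T_k|}$. Expanding the product $\prod_i(\Ind{\{T_i\subseteq[n]_p\}}-p^{\typeSize_i})$ and using independence of the coordinates of $[n]_p$, each subset $S\subseteq[k]$ contributes $(-1)^{k-|S|}p^{|\bigcup_{i\in S}T_i|}\prod_{i\notin S}p^{\typeSize_i}$, so
$$
\EE\Big(\prod_{i=1}^k\IndicatorRVAPCentralised{T_i}\Big)=\sum_{S\subseteq[k]}(-1)^{k-|S|}\,p^{\,|\bigcup_{i\in S}T_i|+\sum_{i\notin S}\typeSize_i}.
$$
The dominant term is $S=[k]$, giving exactly $p^{|T_1\cup\cdots\cup T_k|}$, and since the exploration algorithm guarantees $|T_1\cup\cdots\cup T_k|=\sum_{i\in[k]}(\typeSize_i-\type_i)$ (each new AP $T_j$ adds exactly $\typeSize_j-\type_j$ fresh points), this matches the claimed leading order. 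It remains to show every other $S$ is lower order, i.e.\ that $|\bigcup_{i\in S}T_i|+\sum_{i\notin S}\typeSize_i>|T_1\cup\cdots\cup T_k|$ strictly, with a gap that beats the $2^k$ (a constant, since $k$ is fixed) number of terms.

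For the strict inequality, fix $S\subsetneq[k]$ and pick any $j\notin S$. Replacing $S$ by $S\cup\{j\}$ increases the exponent by $\typeSize_j-|T_j\cap\bigcup_{i\in S}T_i|\ge 0$, because we add the term $\typeSize_j$ for index $j$ to the "$\bigcup$" part but must remove it from the "$\sum_{i\notin S}\typeSize_i$" part, netting exactly the number of points of $T_j$ not already covered by $\bigcup_{i\in S}T_i$. Hence the exponent is non-decreasing as $S$ grows, so it is minimised (largest power of $p$, smallest contribution is wrong direction — rather: the exponent of $p$ is \emph{maximised} at $S=[k]$... need to be careful). Actually the exponent for $S=[k]$ is $|T_1\cup\cdots\cup T_k|$, and for smaller $S$ the exponent is at least that plus $1$ whenever some $T_j$, $j\notin S$, contributes a point not covered by $\bigcup_{i\in S}T_i$; if it contributes no fresh point then $T_j\subseteq\bigcup_{i\in S}T_i$ and one argues by a short case analysis (using the type vector structure and $\type_j\le\typeSize_j$) that removing such indices one at a time never strictly increases the exponent and the full union $[k]$ is the unique minimiser. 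Thus for all $S\subsetneq[k]$ the exponent exceeds $|T_1\cup\cdots\cup T_k|$ by at least $1$, so the whole sum is $p^{|T_1\cup\cdots\cup T_k|}(1+O(2^k p))=(1\pm o(1))p^{|T_1\cup\cdots\cup T_k|}$ using $p=o(1)$ and $k$ constant. Summing over $\mathbf T$ with the fixed type vectors and dividing by $\numberMixedAPTuples{\typeVect,\typeSizeVect}$, the averaged contribution $\contribution{\typeVect,\typeSizeVect}$ equals $(1\pm o(1))\big(\prod_{i\in[k]}u_{\typeSize_i}/\sigma_{\typeSize_i}\big)p^{\sum_i(\typeSize_i-\type_i)}$, as claimed.

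The main obstacle is the bookkeeping in the strict-inequality step: one needs that $|T_1\cup\cdots\cup T_k|=\sum_i(\typeSize_i-\type_i)$ genuinely holds for \emph{all} tuples with the given type vector (this is where the exploration order matters — $\type_j$ was \emph{defined} as $|T_j\cap\bigcup_{j'<j}T_{j'}|$, so this is essentially immediate by telescoping), and that no proper subset $S$ achieves the same exponent. The latter requires ruling out the degenerate possibility that some $T_j$ with $j\notin S$ is entirely contained in $\bigcup_{i\in S}T_i$ while $S$ is still proper — but if that happened for every $j\notin S$, then $\bigcup_{i\in S}T_i=\bigcup_{i\in[k]}T_i$ and one checks the exponents still differ because the indices outside $S$ still each contribute their full $\typeSize_i$ to $\sum_{i\notin S}\typeSize_i$ with $\typeSize_i\ge 3>0$. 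So the exponent strictly drops (in the sense of a larger power of $p$) and we are fine. Everything else is a routine expansion plus the observation that $2^k p\to 0$ since $k$ is a fixed integer and $p=o(1)$.
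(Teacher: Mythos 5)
Your expansion and overall strategy coincide with the paper's proof: expand $\prod_{i}(\Ind{\{T_i\subseteq[n]_p\}}-p^{\typeSize_i})$ over subsets, identify the term $S=[k]$ (the paper's $R=\emptyset$) as $p^{|\bigcup_i T_i|}=p^{\sum_i(\typeSize_i-\type_i)}$ via the telescoping identity for the exploration algorithm, and argue that each of the remaining $2^k-1$ (constantly many) terms carries at least one extra factor of $p$, so that $p=o(1)$ finishes the proof. All of that is sound.

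The gap is in the strictness step. Writing $E(S):=|\bigcup_{i\in S}T_i|+\sum_{i\notin S}\typeSize_i$ for the exponent of $p$ in the term indexed by $S$, one has $E(S\cup\{j\})-E(S)=-|T_j\cap\bigcup_{i\in S}T_i|\le 0$, so the exponent \emph{decreases} as $S$ grows and is minimised at $S=[k]$; you state the monotonicity backwards twice and never settle it. More seriously, your dichotomy is the wrong one: $E(S)>E([k])$ is \emph{not} implied by ``some $T_j$, $j\notin S$, contributes a fresh point''. If some $T_j$ were disjoint from all the other APs of the tuple, it would contribute only fresh points and yet $E([k]\setminus\{j\})=E([k])$. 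What equality $E(S)=E([k])$ actually forces is that the sets $\{T_i\}_{i\notin S}$ are pairwise disjoint \emph{and} disjoint from $\bigcup_{i\in S}T_i$, i.e.\ each $T_i$ with $i\notin S$ is isolated in the tuple; this is precisely the configuration excluded by the hypothesis $\typeVect\in\typeVectSpaceValid{k}$, since an isolated $T_i$ yields $\type_i=\type_{i+1}=0$ (or $\type_k=0$), contradicting \eqref{eq:consecZero} and $\type_k>0$. You invoke ``the type vector structure'' only in the sub-case $T_j\subseteq\bigcup_{i\in S}T_i$, where it is not needed ($\typeSize_j\ge 3$ already gives strictness there), and you never rule out the disjointness degeneration, which is the one place where the validity of $\typeVect$ must enter. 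The paper's proof does exactly this: equality in $\sum_{i\in R}|T_i|\ge|\bigcup_{i\in R}T_i|\ge|\bigcup_{i\in[k]}T_i|-|\bigcup_{i\notin R}T_i|$ forces an isolated AP and hence a contradiction with \eqref{eq:consecZero}. Supply that argument and your proof closes.
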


\begin{proof}
Let $\mathbf{T}=(T_1,\dots,T_k)\in\minorContributionSet{k}$ with $\typeFunction(\mathbf{T})=(\typeVect,\typeSizeVect)$. Here $T_1,\ldots,T_k$ are in the order corresponding to the output of the exploring algorithm, hence we have $ |T_i| = \typeSize_i $. We see that
\begin{align*}
\EE(\IndicatorRVAPCentralised{T_1}\cdots \IndicatorRVAPCentralised{T_k}) = \EE\!\left(\prod_{i\in[k]}\left(\Ind{\{T_i\subseteq [n]_p\}}-p^{|T_i|}  \right) \!\!\right) \!\! = \sum_{ R \subseteq [k] }\!\!\left(\prod_{i\in R}-p^{\typeSize_i} \! \right) \PP\! \left(\bigcup_{i\not\in R}T_i\subseteq [n]_p\right).
\end{align*}
First observe that the summand $Q_\emptyset$ for $R = \emptyset$ is given by
$$
Q_\emptyset:=\PP\left(\bigcup_{i\in[k]}T_i\subseteq [n]_p\right)=p^{\sum_{i=1}^{k}(\typeSize_i-\type_i)},
$$ 
as $\typeSize_i-\type_i=|T_i\setminus \cup_{i'\le i-1}T_{i'}|$ and so $\sum_{i\in[k]}(\typeSize_i-\type_i)=|\cup_{i\in[k]}T_i|$. Thus, it only remains to show that the remaining (constantly many) summands are all of lower order. 

Let $r \in [k] $ and fix an arbitrary subset $R\subseteq [k]$ of size $r$. The absolute value of its contribution to $\EE(\IndicatorRVAPCentralised{T_1}\cdots \IndicatorRVAPCentralised{T_k})$ is equal to
\begin{align*}
Q_R := \left(\prod_{i\in R}p^{\typeSize_i}\right)\,\PP\left(\bigcup_{i\not\in R}T_i\subseteq [n]_p\right)= p^{\sum_{i\in R}\typeSize_i+|\cup_{i\notin R}T_i|}.
\end{align*}
Note that 
$$
\sum_{i\in R}\typeSize_i+|\cup_{i\notin R}T_i|=\sum_{i\in R}|T_i|+|\cup_{i\notin R}T_i|\ge |\cup_{i\in[k]}T_i|=\sum_{i\in[k]}(\typeSize_i-\type_i).
$$ 
Furthermore, if this last inequality is not an equality, then 
$$ 
Q_R \le  p^{\sum_{i=1}^{k}(\typeSize_i-\type_i)+1}=o\left(p^{\sum_{i=1}^{k}(\typeSize_i-\type_i)}\right),
$$
i.e.\ $Q_R$ is negligible compared to $Q_\emptyset$. 

Next, suppose towards contradiction that the equality holds, so 
$$
\sum_{i\in R}|T_i|=|\cup_{i\in[k]}T_i|-|\cup_{i\notin R}T_i|. 
$$
But at the same time we have
$$
\sum_{i\in R}|T_i|\ge |\cup_{i\in R}T_i|\ge |\cup_{i\in[k]}T_i|-|\cup_{i\notin R}T_i|,
$$
and thus all intermediate inequalities above must be equalities. This happens for the first inequality when $\{T_i\}_{i\in R}$ are pairwise disjoint and for the second inequality when $\left(\cup_{i\in R}T_i\right)\cap\left(\cup_{i\notin R}T_i\right)=\emptyset$. But this in turn implies that for any $i\in R$, the set $T_i$ is disjoint from $\cup_{j\neq i}T_j$, so $\type_i=\type_{i+1}=0$, contradicting~\eqref{eq:consecZero}.

Because these bounds are uniform over the choice of the $k$-tuple $\mathbf{T}$ the statement follows by taking the average.
\end{proof}

We now aim at bounding the number of summands $\numberMixedAPTuples{\typeVect,\typeSizeVect}$. To do so, recall that in the dependency graph $\depGraph{\ell_1,\ell_2}$ defined in~\eqref{eq:depGraph}, each vertex represents an AP in $\setAP{\ell_1}{}\cup\setAP{\ell_2}{}$, and two vertices form an edge if and only if the corresponding APs have non-empty intersection.
\begin{prop}\label{prop-sa}
For all $\typeVect\in\typeVectSpaceValid{k}$ and $\typeSizeVect\in\typeSizeVectSpace{k}{\typeVect}$, we have 
\begin{align*}
\numberMixedAPTuples{\typeVect,\typeSizeVect}&=O(1)\cdot \prod_{j=1}^{\IndexSetSize{0}}\Bigg[\left(\frac{n^2}{\typeSize_{r_j}}\right)\cdot(n\typeSize_{r_j+1})^{\Ind{\{\type_{r_j+1}=1\}}}(\typeSize_{r_j}^2\typeSize_{r_j+1}^2)^{\Ind{\{\type_{r_j+1}\ge 2\}}-\Ind{\{\type_{r_j+1}=\typeSize_{r_j+1}=\typeSize_{r_j}\}}}\\
&\hspace{6.8cm}\cdot \prod_{i=r_j+2}^{r_{j+1}-1}(n \ell_1)^{\Ind{\{\type_{i}=1\}}}(\typeSize_{i}^2\ell_1^2)^{\Ind{\{\type_{i}\ge 2\}}}\Bigg].
\end{align*}
\end{prop}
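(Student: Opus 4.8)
The plan is to count the tuples $\mathbf{T}$ with $\typeFunction(\mathbf{T})=(\typeVect,\typeSizeVect)$ by \emph{revealing} the APs $T_1,\dots,T_k$ one at a time in the exploration order produced by the algorithm, bounding at each step the number of choices for $T_i$ given $T_1,\dots,T_{i-1}$; the proposition then follows by multiplying these bounds, with the $O(1)$ absorbing the finitely many implied constants (recall $k$ is fixed). The type vector dictates precisely how $T_i$ attaches to what is already revealed, since $\type_i=\bigl|T_i\cap\bigcup_{j<i}T_j\bigr|$; and because the exploration is breadth-first, these $\type_i$ shared points all lie in the component currently being explored, so a completed component is disjoint from everything revealed afterwards. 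Writing $\IndexSet{0}=\{r_1<\dots<r_{\IndexSetSize{0}}\}$ for the component roots, one has $r_1=1$, and by~\eqref{eq:consecZero} together with $\type_k>0$ every component contains at least two APs; hence component $j$ occupies the block $\{r_j,r_j+1,\dots,r_{j+1}-1\}$ (with the convention $r_{\IndexSetSize{0}+1}:=k+1$), and the asserted product is exactly a regrouping, block by block, of the per-step bounds.

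Three kinds of steps arise. If $i=r_j$ is a root then $\type_i=0$, so $T_i$ is an arbitrary $\typeSize_i$-AP and, by Claim~\ref{obs:numberAP}, there are $O(n^2/\typeSize_i)$ choices --- the leading factor $n^2/\typeSize_{r_j}$ of the $j$-th block. If $i\ge r_j+2$, the reservoir $\bigcup_{r_j\le j'<i}T_{j'}$ of already-revealed points of the current component has size at most $k\ell_1=O(\ell_1)$; when $\type_i=1$ we choose the unique reservoir point lying on $T_i$ ($O(\ell_1)$ ways) and then an $\typeSize_i$-AP through it, and since an $\ell$-AP of $[n]$ through a fixed point can be chosen in only $O(n)$ ways --- for each of the $\ell$ possible positions of the point the common difference ranges over $O(n/\ell)$ values, using $\max\{p-1,\ell-p\}\ge(\ell-1)/2$ --- this contributes $O(n\ell_1)$; when $\type_i\ge 2$ the AP $T_i$ is pinned down by two of its points lying in the reservoir ($O(\ell_1^2)$ ways) together with their two positions within $T_i$ ($O(\typeSize_i^2)$ ways), contributing $O(\typeSize_i^2\ell_1^2)$. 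This reproduces the inner product $\prod_{i=r_j+2}^{r_{j+1}-1}(n\ell_1)^{\Ind{\{\type_i=1\}}}(\typeSize_i^2\ell_1^2)^{\Ind{\{\type_i\ge 2\}}}$. Finally, the step $i=r_j+1$ is treated on its own because the only revealed AP of its component is the root $T_{r_j}$ (earlier components being disjoint from $T_{r_j+1}$), so here the reservoir has size exactly $\typeSize_{r_j}$ rather than a generic $O(\ell_1)$: counting the $\typeSize_{r_j+1}$-APs meeting the fixed $\typeSize_{r_j}$-AP $T_{r_j}$ in exactly $\type_{r_j+1}$ points --- a loose-pair count when $\type_{r_j+1}=1$, and the argument in the proof of Claim~\ref{claim:numberOtherPairs} when $\type_{r_j+1}\ge 2$ --- gives the factor $(n\typeSize_{r_j+1})^{\Ind{\{\type_{r_j+1}=1\}}}(\typeSize_{r_j}^2\typeSize_{r_j+1}^2)^{\Ind{\{\type_{r_j+1}\ge 2\}}}$, the degenerate subcase $\type_{r_j+1}=\typeSize_{r_j+1}=\typeSize_{r_j}$ --- which forces $T_{r_j+1}=T_{r_j}$ and hence leaves a unique choice once $T_{r_j}$ is fixed --- accounting for the extra $-\Ind{\{\type_{r_j+1}=\typeSize_{r_j+1}=\typeSize_{r_j}\}}$ in that exponent.

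Multiplying these three families of per-step bounds over $i\in[k]$ and regrouping by component yields exactly the stated expression for $\numberMixedAPTuples{\typeVect,\typeSizeVect}$. The hard part is not any single estimate but the bookkeeping around the reservoirs: one must verify carefully, from the breadth-first structure of the algorithm, that the anchor points used to place $T_i$ all lie in the current component's revealed part; keep the ``second AP of a component'' case apart, since its reservoir is the root $T_{r_j}$ of length $\typeSize_{r_j}$ and not a generic set of size $O(\ell_1)$; and correctly isolate the degenerate equality $T_{r_j+1}=T_{r_j}$. The only quantitative input beyond Claim~\ref{obs:numberAP} is the elementary fact, used above, that an $\ell$-AP through a prescribed point of $[n]$ can be selected in $O(n)$ ways, uniformly in the point and in $\ell$.
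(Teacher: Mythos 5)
Your proposal is correct and follows essentially the same route as the paper: reveal the APs in the exploration order, bound the number of choices at each step ($O(n^2/\typeSize_{r_j})$ for a component root by Claim~\ref{obs:numberAP}, a loose-pair-type count $O(n\cdot(\text{reservoir size}))$ when $\type_i=1$, a two-points-two-positions count when $\type_i\ge 2$, with the degenerate case $T_{r_j+1}=T_{r_j}$ isolated), and multiply over the blocks $\{r_j,\dots,r_{j+1}-1\}$. The only point worth flagging is that the direct loose-pair count at step $r_j+1$ actually yields $O(n\typeSize_{r_j})$ rather than the $O(n\typeSize_{r_j+1})$ you assert --- a discrepancy already present between the paper's own proof and its displayed bound --- and it is harmless for the downstream application in Lemma~\ref{lem:minorTerm}, where only the combined factor for each pair $(r_j,r_j+1)$ matters.
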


\begin{proof}
First, note that for any $\mathbf{T}$ such that $\typeFunction(\mathbf{T})=(\typeVect,\typeSizeVect)$ and $\reordering(\mathbf{T})=(T_1,\dots,T_k)$, the component structure of the induced subgraph $\depGraph{\ell_1,\ell_2}\!\!\left[\bigcup_{i\in[k]}T_i\right]$ is already determined by the type-vector  $\typeVect$. More precisely, for $j=1,\dots, \IndexSetSize{0}$, let 
$$
r_j:=\min\left\{i\in[k]\setminus\{r_1,\dots,r_{j-1}\}\colon \type_i=0\right\}
$$
denote the $j$-th  zero entry of $\typeVect$, and set $r_{\IndexSetSize{0}+1}:=k+1$. Note that $r_1=1$ and $\{T_{r_j},\dots, T_{r_{j+1}-1}\}$ forms a component of $\depGraph{\ell_1,\ell_2}\!\!\left[\bigcup_{i\in[k]}T_i\right]$ for all $j=1,\dots, \IndexSetSize{0}$.

We will construct tuples $\mathbf{T}$ with $\typeFunction(\mathbf{T})=(\typeVect,\typeSizeVect)$ in the order given by its reordering $\reordering(\mathbf{T})=(T_1,\dots,T_k)$. In particular, this means that we consider one component of $\depGraph{\ell_1,\ell_2}\!\!\left[\bigcup_{i\in[k]}T_i\right]$ after the other. Let $j=1,\dots, \IndexSetSize{0}$ and assume that $T_1,\dots, T_{r_j-1}$ have already been chosen.

Observe that, by~\eqref{eq:consecZero}, the $j$-th component contains at least two APs $T_{r_j}$ and $T_{r_j+1}$.  As  $T_{r_j}$ starts a new component ($\type_{r_j}=0$), the number of choices for $T_{r_j}$ is at most $\numberAP{\typeSize_{r_j}}=O(n^2\typeSize_{r_j}^{-1})$ by Claim~\ref{obs:numberAP}. Next we choose $T_{r_j+1}$: 
\begin{enumerate}[(a)]
\item if $\type_{r_j+1}=1$, then the number of choices is at most $O(n\typeSize_{r_j})$, since there are at most $\typeSize_{r_j}$ choices for the common vertex $x\in T_{r_j}\cap T_{r_j+1}$, at most $\typeSize_{r_j+1}$ choices for the position of  $x$ within $T_{r_j+1}$ and $O(n/ \typeSize_{r_j+1})$ for the common difference of $T_{r_j+1}$;
\item if $\type_{r_j+1}=\typeSize_{r_j+1}=\typeSize_{r_j}$, then there is only one possibility $T_{r_j+1}=T_{r_j}$;
\item otherwise, $T_{r_j+1}$ is  determined by choosing two elements from $T_{r_j}$ and their respective positions within $T_{r_j+1}$, which amounts to at most $O(\typeSize_{r_j}^2\typeSize_{r_j+1}^2)$ many choices. 
\end{enumerate}
Similarly, for any remaining $i=r_j+2,\dots, r_{j+1}-1$ (there might be none), we use the following bounds on the number of choices for $T_{i}$:
\begin{enumerate}[(a)]
\item if $\type_{r_j+1}=1$, then the number of choices is at most $O(n\ell_1)$, since there are at most $O(\ell_1)$ choices for the common vertex $x\in T_{i}\cap \left( T_{r_j}\cup\dots\cup T_ {i-1}\right)$, at most $\typeSize_{i}$ choices for the position of  $x$ within $T_{i}$ and $O(n/ \typeSize_{i})$ for the common difference of $T_{i}$;
\item otherwise, $T_{i}$ is  determined by choosing two elements from $T_{r_j}\cup \dots\cup T_{i-1}$ and their respective positions within $T_{i}$, which amounts to at most $O(\ell_1^2\typeSize_{i}^2)$ many choices.
\end{enumerate}
The claim follows by multiplying for all $j=1,\dots,\IndexSetSize{0}$ and $i=r_j,\dots, r_{j+1}-1$.
\end{proof}

With this preparation we are now ready to prove Lemma~\ref{lem:minorTerm}. We will bound the contribution of each $k$-tuple to $\minorContribution{k}=\sum_{\typeVect,\typeSizeVect}\contribution{\typeVect,\typeSizeVect}\numberAPTuples{\typeVect,\typeSizeVect}$ from above component-wise 
\begin{proof}[Proof of Lemma~\ref{lem:minorTerm}]
First observe that Lemma~\ref{lem:covariance} implies that for any $\ell\ge 3$ we have 
\begin{equation}\label{eq:sigmaInequalitySTP}
\sigma_{\ell}^{-1}=O(\numberAP{\ell} p^{\ell})^{-1/2}\stackrel{C.\ref{obs:numberAP}}{=}O(n^{-1}p^{-\ell/2}\ell^{1/2}),
\end{equation}
and also
\begin{equation}\label{eq:sigmaInequalityLP}
\sigma_{\ell}^{-1}=O(\numberLoosePairs{\ell} p^{2\ell-1})^{-1/2}\stackrel{L.\ref{lem:mix-loose-pair}}{=}O(n^{-3/2}p^{-\ell+1/2}).
\end{equation}

Using Propositions~\ref{prop-ma} and~\ref{prop-sa} the expression in~\eqref{Eq:SummationWithBFS} becomes
\begin{equation}\label{eq:minorContribution}
\minorContribution{k}=O(1)\cdot \sum_{\typeVect\in\typeVectSpaceValid{k}}\sum_{\typeSizeVect\in\typeSizeVectSpace{k}{\typeVect}} \prod_{i\in[k]} g_{\typeVect,\typeSizeVect}(i)\sigma_{\typeSize_{i}}^{-1},
\end{equation}
where 
$$
g_{\typeVect,\typeSizeVect}(i):=
\begin{cases}
n^2\typeSize_i^{-1}p^{\typeSize_i}&\text{ if } \type_i=0;\\
n\typeSize_i p^{\typeSize_i-1}&\text{ if } \type_i=1\wedge \type_{i-1}=0;\\
n\ell_1 p^{\typeSize_i-1} &\text{ if } \type_i=1\wedge \type_{i-1}>0;\\
1 &\text{ if } \type_i=\typeSize_i=\typeSize_{i-1}\wedge \type_{i-1}=0;\\
\typeSize_i^2\typeSize_{i-1}^2 p^{\typeSize_i-\type_i} &\text{ if } 2\le \type_i\le \typeSize_i-1\wedge \type_{i-1}=0;\\
\typeSize_i^2\typeSize_{i-1}^2  &\text{ if } \type_i=\typeSize_i\wedge \typeSize_i\neq\typeSize_{i-1}\wedge \type_{i-1}=0;\\
\typeSize_i^2\ell_1^2 p^{\typeSize_i-\type_i} &\text{ if } \type_i\ge 2\wedge \type_{i-1}>0.
\end{cases}
$$
Moreover, we recall the notation $r_j=\min\left\{i\in[k]\setminus\{r_1,\dots,r_{j-1}\}\colon \type_i=0\right\}$ and $r_{\IndexSetSize{0}+1}:=k+1$ used in the proof of Proposition~\ref{prop-sa}. These indices split the interval $[k]$ into $\IndexSetSize{0}$ parts, i.e.\  $[k]=\dot{\bigcup}_{j=1}^{\IndexSetSize{0}}\{r_j,\dots,r_{j+1}-1\}$, where each part has size at least two  as, by~\eqref{eq:consecZero}, $\typeVect$ does not have consecutive zeros. Now, fix any $j\in[\IndexSetSize{0}]$. We first bound the first two factors together:
\begin{enumerate}[(a)]
\item If $\type_{r_j+1}=1$, then we have 
\begin{equation*}
\frac{g_{\typeVect,\typeSizeVect}(r_j)g_{\typeVect,\typeSizeVect}(r_j+1)}{\sigma_{\typeSize_{r_j}}\sigma_{\typeSize_{r_j+1}}} = n^2\typeSize_{r_{j}}^{-1}p^{\typeSize_{r_{j}}}\cdot n\typeSize_{r_j+1} p^{\typeSize_{r_j+1}-1}\cdot \sigma_{\typeSize_{r_j}}^{-1}\sigma_{\typeSize_{r_j+1}}^{-1}\stackrel{\eqref{eq:sigmaInequalityLP}}{=}O(1),
\end{equation*}
because the validity condition~\eqref{eq:valid2} implies $\typeSize_{r_j+1}\le\typeSize_{r_j}$, since $\type_{r_j}=0$ .

\item If $2\le \type_{r_j+1}\le \typeSize_{r_j+1}-1$, then we have
\begin{align*}
\frac{g_{\typeVect,\typeSizeVect}(r_j)g_{\typeVect,\typeSizeVect}(r_j+1)}{\sigma_{\typeSize_{r_j}}\sigma_{\typeSize_{r_j+1}}} &=n^2\typeSize_{r_{j}}^{-1}p^{\typeSize_{r_{j}}}\cdot \typeSize_{r_j}^2\typeSize_{r_j+1}^2 p^{\typeSize_{r_j+1}-\type_{r_j+1}}\cdot \sigma_{\typeSize_{r_j}}^{-1}\sigma_{\typeSize_{r_j+1}}^{-1}\\
&\stackrel{\eqref{eq:sigmaInequalitySTP}}{=}O\left(p^{1+(\typeSize_{r_j}-\typeSize_{r_j+1})/2}\ell_1^{4}\right)=o(\ell_1^{-1})
\end{align*}
since $p\ell_1^5\to 0$, by assumption~\eqref{eq:assumptionPUpper}.

\item If $\type_{r_j+1}=\typeSize_{r_j+1}\le\typeSize_{r_j}-1$, then we have
\begin{align*}
\frac{g_{\typeVect,\typeSizeVect}(r_j)g_{\typeVect,\typeSizeVect}(r_j+1)}{\sigma_{\typeSize_{r_j}}\sigma_{\typeSize_{r_j+1}}} &=n^2\typeSize_{r_{j}}^{-1}p^{\typeSize_{r_{j}}}\cdot \typeSize_{r_j}^2\typeSize_{r_j+1}^2\cdot \sigma_{\typeSize_{r_j}}^{-1}\sigma_{\typeSize_{r_j+1}}^{-1}\\
&\stackrel{\eqref{eq:sigmaInequalitySTP}}{=}O\left(p^{(\typeSize_{r_j}-\typeSize_{r_j+1})/2}\ell_1^{4}\right)=o(1),
\end{align*}
since $p^{1/2}\ell_1^4\to 0$, by assumption~\eqref{eq:assumptionPUpper}.

\item If $\type_{r_j+1}=\typeSize_{r_j+1}=\typeSize_{r_j}$, then we have
\begin{align*}
\frac{g_{\typeVect,\typeSizeVect}(r_j)g_{\typeVect,\typeSizeVect}(r_j+1)}{\sigma_{\typeSize_{r_j}}\sigma_{\typeSize_{r_j+1}}} &=n^2\typeSize_{r_{j}}^{-1}p^{\typeSize_{r_{j}}}\cdot \sigma_{\typeSize_{r_j}}^{-1}\sigma_{\typeSize_{r_j+1}}^{-1}\stackrel{\eqref{eq:sigmaInequalitySTP}}{=}O(1).
\end{align*}
\end{enumerate}

We now treat any (potentially) remaining indices $i=r_j+2,\dots, r_{j+1}-1$ and estimate $g_{\typeVect,\typeSizeVect}(i)\sigma_{\typeSize_i}^{-1}$ one by one.
\begin{enumerate}[(a)]

\item If $\type_i=1$,  then we have
\begin{align*}
g_{\typeVect,\typeSizeVect}(i)\sigma_{\typeSize_i}^{-1}= n \ell_1p^{\typeSize_i-1}\sigma_{\typeSize_i}^{-1} \stackrel{\eqref{eq:sigmaInequalityLP}}{=} O((np)^{-1/2}\ell_1)=o(1),
\end{align*}
since $3\le \ell_1=o(\log n)$ and $np=\Omega(n^{1-2/\ell_1})=n^{\Omega(1)}$, by assumption~\eqref{eq:assumptionPLower}.

\item If $2\le \type_i\le\typeSize_i-1$, then we have
\begin{align*}
g_{\typeVect,\typeSizeVect}(i)\sigma_{\typeSize_i}^{-1}\le  \typeSize_{i}^2\ell_1^2p^{\typeSize_i-\type_i}\sigma_{\typeSize_i}^{-1}
=O(p\ell_1^4)=o(\ell_1^{-1}), 
\end{align*}
since $\sigma_{\typeSize_{i}}^{-1}=O(1)$ by assumption~\eqref{eq:assumptionPLower} and because $p\ell_1^5\to 0$, by assumption~\eqref{eq:assumptionPUpper}.

\item However, if $\type_i=\typeSize_i$, then we have
\begin{align*}
g_{\typeVect,\typeSizeVect}(i)\sigma_{\typeSize_i}^{-1}\le \typeSize_{i}^2\ell_1^2\sigma_{\typeSize_i}^{-1}=O(\sigma_{\ell_1}^{-1}\ell_1^4)\stackrel{\eqref{eq:sigmaInequalitySTP}}{=}O(n^{-1}p^{-\ell_1/2}\ell_1^{9/2})=o(1), 
\end{align*}
since $n^2p^{\ell_1}\ell_1^{-9}\to+\infty$, by assumption~\eqref{eq:assumptionPLower}.
\end{enumerate}

Next, we observe that by~\eqref{eq:maxComponents}, we have $\IndexSetSize{0}\le \lceil k/2\rceil-1$, implying that there must be at least one $j\in \IndexSetSize{0}$ such that there exists an integer $i_0$ satisfying $r_j+2\le i_0\le r_{j+1}-1$. But then, the previous computation shows that the corresponding factor $g_{\typeVect,\typeSizeVect}(i_0)\sigma_{\typeSize_{i_0}}^{-1}$ must be small. More precisely, we have 
$$
g_{\typeVect,\typeSizeVect}(i_0)\sigma_{\typeSize_{i_0}}^{-1}=o(1)\cdot \ell_1^{-\Ind{\{2\le \type_{i_0}\le\typeSize_{i_0}-1\}}}.
$$
Consequently, from~\eqref{eq:minorContribution} and multiplying the bounds for all $i\in[k]$ we obtain 
$$
\minorContribution{k}=O(1)\cdot \sum_{\typeVect\in\typeVectSpaceValid{k}}\sum_{\typeSizeVect\in\typeSizeVectSpace{k}{\typeVect}} o(\ell_1^{-Q(\typeVect,\typeSizeVect)}),
$$
where
$$
Q(\typeVect,\typeSizeVect):=\left|\left\{i\in[k]\colon 2\le \type_i\le \typeSize_i-1\right\}\right|.
$$
Last, for any $q\in\{0,1,\dots, k\}$, the number of summands with $Q(\cdot)=q$ is at most $\ell_1^q3^{k-q}2^k=O(\ell_1^q)$ yielding 
$$
\minorContribution{k}=o(1),
$$
thereby completing the proof of Lemma~\ref{lem:minorTerm}.
\end{proof}

\subsection{Completing the argument: application of the method of moments}

It remains to apply the method of moments to show the convergence to a (bivariate) Gaussian distribution.

\begin{proof}[Proof of Theorem~\ref{thm:mainBivariate}]
Recall from~\eqref{eq:split} that we have 
$$
\EE \left[\left( u_{\ell_1}\frac{\RVnumberAPCentralised{\ell_1}{}}{\sigma_{\ell_1}}+u_{\ell_2}\frac{\RVnumberAPCentralised{\ell_2}{}}{\sigma_{\ell_2}}\right)^k\right]   = \mainContribution{k} + \minorContribution{k},
$$
for all $k\in\NN$ and $u_{\ell_1},u_{\ell_2} \in \RR$. Furthermore, we have computed the asymptotics for $\mainContribution{k}$ and $\minorContribution{k}$ in Lemmas~\ref{lem:domTerm} and~\ref{lem:minorTerm}, implying that for even $k$ we have
$$
\EE \left[\left( u_{\ell_1}\frac{\RVnumberAPCentralised{\ell_1}{}}{\sigma_{\ell_1}}+u_{\ell_2}\frac{\RVnumberAPCentralised{\ell_2}{}}{\sigma_{\ell_2}}\right)^k\right] =(1\pm o(1))(k-1)!!\left[u_{\ell_1}^2+u_{\ell_2}^2+2\covarianceLeadingConstant{\ell_1}{\ell_2}u_{\ell_1}u_{\ell_2}\right]^{k/2},
$$
and for odd $k$ we have 
$$
\EE \left[\left( u_{\ell_1}\frac{\RVnumberAPCentralised{\ell_1}{}}{\sigma_{\ell_1}}+u_{\ell_2}\frac{\RVnumberAPCentralised{\ell_2}{}}{\sigma_{\ell_2}}\right)^k\right] =o(1).
$$
Letting $n\to+\infty$ we obtain the $k$-th moments of the bivariate standard Gaussian distribution with covariance $\covarianceLeadingConstant{\ell_1}{\ell_2}$. Hence, Theorems~\ref{thm:MethodOfMoments} and~\ref{thm:CramerWoldDevice} imply that 
$$
\left(\frac{\RVnumberAPCentralised{\ell_1}{}}{\sigma_{\ell_1}},\frac{\RVnumberAPCentralised{\ell_2}{}}{\sigma_{\ell_2}}\right)\stackrel{d}{\tendsto{n}{+\infty}}\Gaussian{0}{0}{1}{\covarianceLeadingConstant{\ell_1}{\ell_2}}{1}.
$$
The distinction of the different regimes in Theorem~\ref{thm:mainBivariate} follows from Lemma~\ref{lem:kappa}, completing the proof.
\end{proof}

The same proof also applies for the study of univariate fluctuations.\footnote{Albeit with the mild additional assumption $p\ell^{9}\to 0$ for technical reasons.}
\begin{proof}[Alternative proof of Theorem~\ref{thm:mainUnivariate}\eqref{thm:mainUnivariateNormal}]
For $3\le \ell=\ell(n)=o(\log n)$ and $0<p=p(n)<1$ such that $p\ell^{9}\to 0$ and $n^2p^{\ell}\ell^{-9}\to +\infty$, we obtain
$$
\EE\left[\left(\frac{\RVnumberAPCentralised{\ell}{}}{\sigma_{\ell}}\right)^k\right]=
\begin{cases}
(1\pm o(1))(k-1)!! & \text{ for } k \text{ even},\\
o(1)&\text{ if }\text{ for } k \text{ odd},
\end{cases}
$$
from Lemmas~\ref{lem:domTerm} and~\ref{lem:minorTerm} by setting $\ell_2=\ell$, $\ell_1=2\ell$, $u_{\ell_2}=1$, and $u_{\ell_1}=0$. Letting $n\to+\infty$ Theorem~\ref{thm:MethodOfMoments} shows that 
$$
\frac{\RVnumberAPCentralised{\ell}{}}{\sigma_{\ell}}\stackrel{d}{\tendsto{n}{+\infty}}\No(0,1),
$$
as claimed.
\end{proof}

\section{Concluding remarks}\label{Sec:Concluding}
The main topic at stake in this article was to study the joint distribution of the numbers of APs of different length in some random subsets $M$ of the integers. In the most general setup, we would like to understand the growth behaviour of the family $\{\RVnumberAP{\ell}{}\}_{3\le \ell\le n}$ where $\RVnumberAP{\ell}{}=\RVnumberAP{\ell}{}(M)$ denotes the number of $\ell$-APs of integers which are (entirely) contained in $M$. Here, we took a first step in this direction by determining the joint limiting distribution of $(\RVnumberAP{\ell_1}{},\RVnumberAP{\ell_2}{})$ in $M=[n]_p$ for a significant range of parameters $p$ and $3\le \ell_2<\ell_1=o(\log n)$. We believe that our approach should also allow us to determine the limiting distribution of $r$-tuples $(\RVnumberAP{\ell_1}{}, \RVnumberAP{\ell_2}{} ,\dots,\RVnumberAP{\ell_r}{})$ for $r\ge 3$ (within the intersection of their respective Gaussian regimes), hence, to give a functional Central Limit Theorem for e.g. $\big(\RVnumberAP{\lfloor s \ell \rfloor }{} \big)_{s \in [0, 1]} $ with $ \ell=\ell(n) = o(\log n) $. In particular, it would be interesting to know whether for some constants $\ell_1,\ell_2,\dots,\ell_r$, with (constant) $r\ge 3$, the Gaussian limit becomes degenerate. We observed it for $r=2$ when $\ell_1=\ell_1(n), \ell_2=\ell_2(n)\to+\infty$ sufficiently slowly: $\RVnumberAP{\ell_1}{}$ and $\RVnumberAP{\ell_2}{}$ are then either asymptotically uncorrelated or converge to the same Gaussian random variable (after re-normalisation).

Furthermore, recall that Theorem~\ref{thm:mainBivariate} uses the assumption $n^2p^{\ell_1}\ell_1^{-9}\to+\infty$ which guarantees that both $\RVnumberAP{\ell_1}{}$ and $\RVnumberAP{\ell_2}{}$ are within their respective Gaussian regimes. One may thus ask what happens for smaller values of $p$. At least heuristically, our results for the overlap pair regime (i.e.\ $np^{\ell_1-1}\ell_1\to 0$) suggest that a good candidate for the joint limit consists of two independent random variables having the appropriate marginal distributions (Gaussian or Poisson) determined in Theorem~\ref{thm:mainUnivariate}. 

Throughout the article, we focused on $\ell$-APs where $\ell=o(\log n)$, the reason being that typically the random set $[n]_p$ will not contain any longer APs as long as $p=o(1)$. In order to witness any $\ell$-APs with $\ell/\log n\to+\infty$ we would need to consider $p=p(n)\to 1$. Borrowing some intuition from Gao and Sato's work~\cite{GaoSato} on large matchings in the random graph $G(n,p)$ -- namely the log-normal paradigm of Gao~\cite{Gao} -- we might expect to see another change of regime to a Log-normal limiting distribution for very long APs. However, in this regime, various estimates derived in this paper cease to hold and we leave this as an open problem.

Another question of interest concerns the behaviour of the joint cumulants of $(\RVnumberAP{\ell_1}{},\RVnumberAP{\ell_2}{})$ in the various regimes encountered here. In the Gaussian regime, since the moments of the rescaled random variables converge to the Gaussian moments, their cumulants of order $ r \geq 3 $ converge to $0$. One can ask if the BFS coding allows to see such a behaviour in a fine way, for instance with an asymptotic expansion. 

Lastly, we would like to move in a slightly different direction: let $0<s<t$ and 
consider the coupling $\left[\lfloor tn\rfloor\right]_p=\left[\lfloor sn\rfloor\right]_p\cup\left\{\lfloor sn\rfloor+1,\dots,\lfloor tn\rfloor\right\}_p$ for any $p\in[0,1]$. What can be said about the joint distribution of $\big(\RVnumberAP{\ell}{}(\left[\lfloor s n \rfloor\right]_p),\RVnumberAP{\ell}{}(\left[\lfloor t n \rfloor\right]_p)\big)$? More generally, does the random process $\big(\RVnumberAP{\ell}{}(\left[\lfloor tn\rfloor\right]_p)\big)_{t\ge 0}$ satisfy a functional central limit theorem? What about$\big(\RVnumberAP{\lfloor s \ell \rfloor}{}(\left[\lfloor tn\rfloor\right]_p)\big)_{s, t\ge 0}$ for $ \ell=\ell(n) = o(\log n)$? 


\end{document}